\documentclass[reqno]{amsart}
\usepackage{amssymb,latexsym}
\usepackage{amsmath,color}
\usepackage{amsthm}
\usepackage{epsfig}
\usepackage{graphicx}
\usepackage{hyperref}
\usepackage{titletoc}
\usepackage{palatino,mathpazo}
\numberwithin{equation}{section}
\newtheorem{theorem}{Theorem}[section]
\newtheorem{proposition}[theorem]{Proposition}
\newtheorem{lemma}[theorem]{Lemma}

\theoremstyle{definition}

\newtheorem{remark}{Remark}[section]

\def\XXint#1#2#3{{\setbox0=\hbox{$#1{#2#3}{\int}$}
		\vcenter{\hbox{$#2#3$}}\kern-.5\wd0}}

\def\la{\lambda}

\def\e{\varepsilon}

\def\R{\mathbb{R}^4}

\def\R{\mathbb{R}}

\def\XXint#1#2#3{{\setbox0=\hbox{$#1{#2#3}{\int}$ }
		\vcenter{\hbox{$#2#3$ }}\kern-.6\wd0}}

\def\hk{\hat{K}}

\begin{document}

\title[KP-I lump solution to travelling waves of GP equation]{From KP-I lump solution to travelling waves of Gross-Pitaevskii equation}
	
\author[Y. Liu]{Yong Liu}
\address{Yong Liu, Department of Mathematics,  University of Science and Technology of China, Hefei, China}
\email{yliumath@ustc.edu.cn}

\author[Z.P. Wang]{Zhengping Wang}
\address{Zhengping Wang, Department of Mathematics, Wuhan University of Technology, Wuhan, Hubei, China}
\email{zpwang@whut.edu.cn}
	
\author[J.C. Wei]{Jun-cheng Wei}	
\address{Juncheng Wei, Department of Mathematics, University of British Columbia, Vancouver, B.C., Canada, V6T 1Z2}
\email{jcwei@math.ubc.ca}
	
\author[W. Yang]{Wen Yang}
\address{\noindent Wen ~Yang,~Wuhan Institute of Physics and Mathematics, Innovation Academy for Precision Measurement Science and Technology, Chinese Academy of Sciences, Wuhan 430071, P. R. China.}
\email{wyang@wipm.ac.cn}

\begin{abstract}
Let $q(x,y)$ be an nondegenerate lump solution to KP-I (Kadomtsev-Petviashvili-I) equation
$$\partial_x^4q-2\sqrt{2}\partial_x^2q-3\sqrt{2}\partial_x((\partial_xq)
^2)-2\partial_y^2q=0.
$$
We prove the existence of a traveling wave solution $ u_{\e} (x-ct, y)$ to
GP (Gross-Pitaevskii) equation
$$
i\partial_{t}\Psi+\Delta\Psi+(1-|\Psi|^{2})\Psi=0,\ \ \ \mbox{in} \ {\mathbb R}^2 $$
in the transonic limit
$$ c=\sqrt{2}-\epsilon^2 $$
with
$$ u_\epsilon =1 + i \epsilon q(x,y) + {\mathcal O} (\epsilon^2). $$
This proves the existence of finite energy solutions in the so-called   Jones-Roberts program  in the transonic range $ c \in (\sqrt{2}-\epsilon^2, \sqrt{2})$. The main ingredients in our proof are detailed point-wise estimates of the Green function associated to a family of fourth order hypoelliptic operators
$$\partial_x^4-(2\sqrt{2}-\e^2)\partial_x^2-2\partial_y^2+\e^2\partial_x^2\partial_y^2+\e^4\partial_y^4.$$
\end{abstract}
\maketitle
{\bf Keywords}:~ KP-I lump solution, Gross-Pitaevskii equation, 4-th order anisotropic elliptic operator

\section{Introduction and statement of the main result}
The GP (Gross-Pitaevskii) equation investigated in this paper has the form
\begin{equation}
	i\partial_{t}\Psi+\Delta\Psi+(1-|\Psi|^{2})\Psi=0,\ \ \ \mbox{in} \ {\mathbb R}^2 \label{GP}%
\end{equation}
where $\Psi$ is a complex-valued function in $ {\mathbb R}^2$. It is formally Hamiltonian and can
be written as
\[
\partial_{t}\Psi=-i\nabla E,
\]
where $E$ is the Hamiltonian energy given by%
\[
E\left(  \Psi\right)  :=\int_{ {\mathbb R}^2} \left[  \frac{1}{2}\left\vert \nabla
\Psi\right\vert ^{2}+\frac{1}{4}\left(  \left\vert \Psi\right\vert
^{2}-1\right)  ^{2}\right]  ,
\]
and $\nabla E:=-\Delta\Psi-(1-|\Psi|^{2})\Psi$ represents the $L^{2}$-gradient
of $E.$ In this paper, we are interested in the travelling wave solutions of   \eqref{GP}. More precisely, let us consider solutions of  \eqref{GP} of the form $\Phi\left(  x-ct,y\right)  .$ Then
$\Phi\left(  x,y\right)  $ satisfies the following elliptic equation
\begin{equation}
	ic\partial_{x}\Phi=\Delta\Phi+\left(  1-\left\vert \Phi\right\vert
	^{2}\right)  \Phi \ \ \ \mbox{in} \ {\mathbb R}^2. \label{GPe}%
\end{equation}
Since $\Phi$ is complex valued, actually \eqref{GPe} is a
system consisting of two elliptic equations.

Travelling wave solutions are expected to play central role in the long time
dynamics of the GP equation. One of the main issue in the study of \eqref{GP} is the existence and classification of solutions to the
equation \eqref{GPe}. We would like to mention that some types of classification results and universal bound of certain classes of solutions, including travelling wave solutions, to the GP equation has been obtained in \cite{Farina}. Reversing the time if necessary, we may
assume that the travelling speed $c$ is nonnegative. Let us focus on the
equation \eqref{GPe} imposed in the entire space
$\mathbb{R}^{n}.$ Without any assumption on the asymptotic behavior at
infinity, the structure of the solution set of \eqref{GPe}
could be complicated. Observe that the energy $E$ is conserved the GP flow. We
therefore focus on those finite energy solutions of \eqref{GPe}. It follows from results of \cite{Gra1,Gra3} that any finite energy solution
will satisfy, up to a multiplicative constant,
\[
\Phi\left(  z\right)  \rightarrow1,\text{ as }\left\vert z\right\vert
\rightarrow+\infty.
\]
More precise asymptotic behavior of finite energy solutions is also available.
Indeed, by results of \cite{Gra4}, for $c\in\left(
0,\sqrt{2}\right)  ,$ there holds%
\[
\left\vert z\right\vert \left(  \Phi\left(  z\right)  -1\right)  -if\left(\frac{z}{|z|}\right)  \rightarrow0,\text{ as }z\rightarrow
+\infty,
\]
where $f$ is a smooth function defined on $S^{n-1},$ the $n-1$-dimensional
unit sphere. Particularly, when dimension $n=2,$ there exists constants
$\alpha,\beta,$ such that
\begin{equation}
	f\left( \frac{z}{|z|}\right)  =\frac{\left(  \alpha x+\beta
		y\right)  \sqrt{x^{2}+y^{2}}}{_{x^{2}+\left(  1-\frac{c^{2}}{2}\right)  y^{2}%
	}}. \label{asy}%
\end{equation}

From the formula  \eqref{asy} we notice that the
existence or nonexistece of nontrivial travelling wave solutions heavily depends on the
speed $c.$ For $c>\sqrt{2},$ it is proved by Gravejat \cite{Gravejat} that any
finite energy solution to  \eqref{GPe}  has to be a constant.

In the case of $c=\sqrt{2},$ it is now known \cite{Gra2} that there does not
exist nonconstant finite energy solutions when $n=2.$ The problem is still
open for $n\geq3.$

In the subsonic regime, that is, when $c\in\left(  0,\sqrt{2}\right)  ,$
Jones et.al \cite{J,J1} studied the equation from the physical
point of view and obtained solutions with formal and numerical calculation.
Later on, many works have been done to get the solutions in a more rigorous mathematical way. The methods applied in these works can be divided into two
categories. The first one is variational, and the second one is
Lyapunov-Schmidt reduction.

Variationally, there are several different strategies to tackle the problem.
Mountain pass arguments were used in \cite{Ber3,Chiron} to prove the existence
of solutions with small travelling speed. On the other hand, minimizing the
energy functional with fixed momentum yields existence in dimension 2 for any
constrained value of the momentum and existence in dimension 3 when the
momentum $p>p_{0}$ for some threshold value $p_{0},$ see \cite{Ber1,Ber2}.
In spite of all the above results, the question remains as for whether for all
$c\in\left(  0,\sqrt{2}\right)  $, there is a solution. Minimizing the energy
under a Pohozaev constraint, Maris \cite{Mar} successfully obtained solutions in
the full speed interval $\left(  0,\sqrt{2}\right)  $ for dimension $n>2.$
Unfortunately, this argument breaks down in 2D, thus leaving the existence
problem open in this dimension. Recently, Bellazzini-Ruiz \cite{Ruiz} proved
the existence of almost all subsonic speed in 2D, using variational argument and Struwe's monotonicity trick. They also recovered the results of Maris in 3D.

Up to now, the variational arguments mentioned above have not been able to
tackle the higher energy solutions. For these type of solutions, when the
speed $c$ is close to zero, in \cite{Liu2,Liu-Wei}, the second and third authors applied
Lyapunov-Schmidt reduction method to show the existence by gluing suitable
copies of the vortex solutions of the Ginzburg-Landau equation. In dimension
two, the position of the vortices is determined by the Adler-Moser
polynomials; in higher dimensions, the position is determined by a family
polynomials, which we called generalized Adler-Moser polynomials. We also
refer to \cite{ChironP} for the discussion of two vortices case.

\medskip

When the speed $c$ tends  to the transonic limit $\sqrt{2}$,  it is now known that  a suitable rescaled travelling waves
will converge to solutions of the KP-I equation
\begin{equation}
\label{KPIeqn}
\partial_x^4q-2\sqrt{2}\partial_x^2q-3\sqrt{2}\partial_x((\partial_xq)
^2)-2\partial_y^2q=0,
\end{equation}
 which is an important
integrable system. (See Section 2 below for the derivation.) We refer to Bethuel, Gravejet and Saut \cite{Ber0} for convergence result in dimension two, and Chiron and Maris \cite{Chiron2} for convergence results  in dimension three or higher. (See also \cite{Chiron2010,Chiron3,Chiron20142} for detailed discussion and related results.)   Numerical evidence of these results can be found in
\cite{Chiron3,J}. An important open question is whether the converse is true, i.e., given a solution to the KP-I equation (\ref{KPIeqn}), whether or not a travelling wave solution to (\ref{GPe}) close to the KP-I solution  exists.  For the KP-I equation, lump solutions have been obtained by variational methods in \cite{Saut,Saut2,Liuyue}. We also point out that the existence of least energy solutions for $c$
close to $\sqrt{2}$ can be obtained by variational arguments. However, since
we lack a clear classification of solutions to the KP-I equation, the
precise form of these \textquotedblleft transonic" solutions are not known. On
the other hand, although the Lyapunov-Schmidt reduction type perturbation
arguments have the disadvantage that it can not be used for the general case $c\in\left(  0,\sqrt{2}\right)$, it is possible to handle the case
of $c$ close to $\sqrt{2}.$ This will be carried out in this paper and our main result is the following

\begin{theorem}
\label{app}
Let $ q (x,y)$ be an nondegenerate lump solution to the following KP-I equation (\ref{KPIeqn}). For any $\varepsilon>0$ sufficiently small, there exists a
solution $\Phi_{\varepsilon}$ to the equation \eqref{GPe}
with travelling speed $c=\sqrt{2}-\varepsilon^{2}$ and has the following asymptotic behavior
\begin{equation*}
\Phi_\e=1+i\e q (x,y)+{\mathcal O}(\e^2).
\end{equation*}
\end{theorem}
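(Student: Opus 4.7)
The plan is a Lyapunov-Schmidt reduction anchored on the nondegenerate KP-I lump. First, in the natural KP scaling $X=\varepsilon x$, $Y=\varepsilon^2 y$, I would insert the formal expansion
$$\Phi_\varepsilon = 1 + i\varepsilon q(X,Y) + \varepsilon^2 A_1(X,Y) + i\varepsilon^3 B_1(X,Y) + \cdots$$
into \eqref{GPe} with $c=\sqrt{2}-\varepsilon^2$. The real part at order $\varepsilon^2$ algebraically determines $A_1$ in terms of $q$; eliminating $A_1$ from the imaginary part at order $\varepsilon^3$ reproduces precisely the KP-I equation \eqref{KPIeqn} for $q$, which is what motivates the leading ansatz. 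Continuing to higher orders yields a triangular hierarchy of linear equations for the successive corrections, and the nondegeneracy of $q$ ensures that, after projecting out the finite-dimensional kernel generated by the translation (and scaling) symmetries of KP-I, each is solvable.

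After truncating this formal expansion at a sufficiently high order to produce an approximate solution $\Phi_{\mathrm{app}}$ with residual of size $\mathcal{O}(\varepsilon^N)$, I would write $\Phi_\varepsilon = \Phi_{\mathrm{app}} + \varphi$ and recast \eqref{GPe} as
$$\mathcal{L}_\varepsilon \varphi = \mathcal{E}_\varepsilon + \mathcal{N}(\varphi),$$
where $\mathcal{L}_\varepsilon$ is the linearization at $\Phi_{\mathrm{app}}$, $\mathcal{E}_\varepsilon$ is the small residual, and $\mathcal{N}$ collects the quadratic and cubic terms. Splitting $\varphi$ into real and imaginary parts and using the algebraic leading-order slaving of the real part to the imaginary part to eliminate one unknown reduces $\mathcal{L}_\varepsilon$, at leading order, to the fourth-order hypoelliptic operator
$$\partial_x^4-(2\sqrt{2}-\varepsilon^2)\partial_x^2-2\partial_y^2+\varepsilon^2\partial_x^2\partial_y^2+\varepsilon^4\partial_y^4$$
perturbed by a lower-order potential built from $q$.

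The core of the argument is the linear theory for this operator. I would derive sharp pointwise bounds on its Green function by Fourier analysis, tracking the anisotropic algebraic decay in the $x$- and $y$-directions dictated by its degenerate symbol. Such bounds allow an inversion of $\mathcal{L}_\varepsilon$ in anisotropic weighted spaces whose weights mimic the decay of the KP-I lump, modulo a finite-dimensional approximate cokernel coming from the symmetries of $q$. The nondegeneracy hypothesis is precisely what furnishes a priori estimates for this inverse that are uniform as $\varepsilon\to 0$.

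With a uniform right inverse in hand, the reduced equation for $\varphi$ becomes a contraction in a small ball of the weighted space, producing a unique small perturbation for every sufficiently small $\varepsilon$; the Lagrange multipliers dual to the kernel directions are eliminated by adjusting the translation (and scaling) parameters of the underlying lump, invoking the corresponding invariances of \eqref{GPe}. The main obstacle is the linear step: the symbol of the fourth-order operator is highly anisotropic, its Green function exhibits different algebraic decay rates in the two coordinate directions, and keeping the estimates uniform as $\varepsilon\to 0$ while simultaneously controlling the $q$-dependent potential is the genuinely delicate part of the proof.
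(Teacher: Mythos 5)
Your proposal is correct in its analytical core and matches the paper's strategy at the level of the main ingredients: the transonic rescaling, the identification of the perturbed KP-I equation for the imaginary part, the pointwise Green-function estimates for the anisotropic operator $\partial_x^4-(2\sqrt{2}-\varepsilon^2)\partial_x^2-2\partial_y^2+\varepsilon^2\partial_x^2\partial_y^2+\varepsilon^4\partial_y^4$, and a weighted-space contraction uniform in $\varepsilon$. Where you diverge is in the bookkeeping and the kernel handling. The paper does not build a high-order approximate solution at all: it uses the exact finite ansatz $f=1+\varepsilon^2 f_1+\varepsilon^4 f_2$, $g=\varepsilon g_1$, slaves $f_1$ to $g_1$ algebraically via \eqref{f1g1-3}, and then solves for $f_2$ from the first-order transport equation \eqref{eq:f2} by variation of parameters with the explicit integrating factor $F_0=\exp\bigl(-\int_{-\infty}^x \tfrac{2q}{\sqrt{2}-\varepsilon^2}\bigr)$ (Lemma \ref{Le2.1}); this coupling of the real part through an ODE, and the resulting loss of decay that forces the elaborate norms \eqref{3.def-*}, is a genuine structural feature that a purely ``algebraic slaving plus lower-order potential'' picture hides. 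Second, the paper performs no Lyapunov--Schmidt finite-dimensional reduction: it works in the symmetry class odd in $x$, even in $y$, which removes the translational kernel outright, so there are no Lagrange multipliers and no adjustment of lump parameters; your projection-plus-parameter-adjustment route is the more standard variant and should also work, at the price of estimating the projections of the error and of the nonlinearity onto $\partial_x q$, $\partial_y q$. Two caveats on your linear step: under the stated nondegeneracy hypothesis the kernel consists of translations only (there is no ``scaling'' kernel element for the fixed-coefficient equation \eqref{KPIeqn}), and nondegeneracy alone does not hand you the uniform a priori bound, because the linearized operator has a negative direction (Morse index one for the standard lump); the paper must treat this direction explicitly in Proposition \ref{pr3.1} by decomposing along $\phi_1=\partial_x^{-1}\phi_0$, and your argument would need an analogous step rather than a direct coercivity claim. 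Your plan buys conceptual familiarity (standard LS with a high-order ansatz), while the paper's exact decomposition keeps the error terms explicit at order $\varepsilon^2$ and lets the symmetry class replace the reduced equation entirely.
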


\begin{remark} {\em Lump solutions to KP-I are a kind of rational function solutions, localized and decaying in all directions.  A lump solution to KP-I is called nondegenerate if the corresponding linearized operator admits only translational kernels. For the standard lump solution to (\ref{KPIeqn})
\begin{equation}
\label{lump1}
q_0  (x,y)==-\frac{2\sqrt{2}x}{x^2+\sqrt{2}y^2+\frac{3}{2\sqrt{2}}}
 \end{equation}
it was proved in \cite{Liu-Wei-2} that $q_0$ is nondegenerate and has Morse index one. (We note that the Morse index of the lump solution (\ref{lump1}) has been shown numerically to be one (\cite{Chiron3}).)  There are many higher Morse index lump solutions to KP-I and their nondegeneracy are expected (but rigorous proofs are needed). See \cite{Ablowitz, Ma, Manakov1, Manakov2} and the references therein.}
\end{remark}


In the following let us sketch the main idea and explain the new ingredients of our proof. First of all, we assume our solution has the form $f+ig$, where $f,~g$ are real valued functions with $f-1=O\left(  \varepsilon^{2}\right)  $ and $g=O\left(
\varepsilon\right).$ Plugging the function into \eqref{GP}, we shall see that the original problem is connected with the KP-I equation in the second leading order. Particularly, $f$ should essentially be determined by $g$, which satisfies a perturbed KP-I equation. However, we can not just simply build up a genuine solution by the nondegeneracy of the lump solution and the standard Lyapunov-Schmidt reduction method. The crucial point is that the perturbation terms would create some difficulties in obtaining the suitable decay property and studying the nonlinear problem. Therefore, we have to consider the full linearized problem including the higher order derivatives on $y$ direction, but with small coefficients
\begin{equation}
\label{4thanisotropic}
\partial_x^4-(2\sqrt{2}-\e^2)\partial_x^2-2\partial_y^2+\e^2\partial_x^2\partial_y^2+\e^4\partial_y^4.
\end{equation}
The $L^p$ boundedness of this anisotropic 4-th order elliptic operator has been derived and used in the convergence results in \cite{Ber0, Chiron2}.
However, in order to capture the decay property of the perturbations, we need to obtain {\em point-wise} behavior of the Green's function associated with  the anisotropic 4-th order operator (\ref{4thanisotropic}).  Precisely, since the maximum principle can not be used to show the decay property for the fourth order anisotropic problem, we have to study the Green kernel and its pointwise derivatives by describing their asymptotic behaviors around the singular point and decay property at infinity instead. (It is also interesting to mention that for the kernel of the linear operator $\partial_x^4-(2\sqrt{2}-\e^2)\partial_x^2-2\partial_y^2$, several important properties including the decay property and the asymptotic behavior around the singularity on the kernel $K_0$ have been obtained  by Gravejat in \cite{Gra4}.)

In most situations, the Green function would decay faster if we take more derivatives on it. However, the situation becomes quite different here, this is due to the fact that the Fourier integration of the Fourier inverse transform of the Green function  has singularity in our problem. As a consequence, the higher derivatives of the Green function can decay at most $r^{-\frac32}$ at infinity. Simultaneously, as the original problem is an anisotropic elliptic operator, we would get some huge coefficients on the derivatives of the Green function with respect to the $y$ direction, see Theorem \ref{tha.1} in section 3. All the above difficulties make the
whole problem quite sensitive, it would break down the Lyapunov-Schmidt reduction process easily in the nonlinear problem if we do not pursue the accurate asymptotic behaviors on its various derivatives. To solve this issue, we have to study the linear problem in a suitable space, see the definition of space in \eqref{3.def-*}. The definition of this space turns out to be quite complicated, but it seems to us that most of the terms are necessary. Its norm is consisted of three parts, the first part $\|\cdot\|_a$ is used for studying the $L^2$ theory of the linearized problem, the second part (contain various mixed derivatives) is included to deal with the terms appeared in the error and nonlinear higher order perturbations. Precisely, consider the term $\partial_x\phi$ for example, we shall see that it appears in the error terms, such as $\partial_y(\partial_xg_1\partial_yg_1)$, see \eqref{2.dec-p2} in section 2. When we deal with this term, we need at least one of $\partial_x\phi$ and $\partial_y\phi$ has decay up to $r^{-\frac32}$ at infinity, while concerning the linear problem in weighted Sobolev space, we require $\partial_x\phi$ to be small, but sacrificing a little bit in its decay rate. Therefore we have to include both situations in the definition of $\|\cdot\|_*$. The third part, the anti-derivative terms are used for making the perturbations of $f$ have enough decay to run the fixed point argument. After establishing the a-priori estimate for the full linearized problem, we study the existence of the nonlinear problem by contraction principle. Then the original result is established.  We remark that the arguments presented in this paper is quite robust for treating other models with the finite Morse index solutions to (\ref{KPIeqn}).

The paper is organized as follows. In Section \ref{GPequation} we derive the
equations satisfied by $f$ and $g.$ In Section 3, we study the Green function associated to the linear operator. In section 5, we analyzed the perturbed linearized KP-I operator around the lump solution. In Section 5, we solve the nonlinear
problem using contraction mapping principle and finish the proof of Theorem \ref{app}.
\medskip

\begin{center}
{\bf Notation}
\end{center}

Throughout the paper, the letter C will stand for positive constants which are allowed to vary among different formulas and even within the same lines. The letter $\varepsilon>0$ will represent a small parameter.

\section{KP-I as a consistent condition for GP\label{GPequation}}

We are seeking for travelling wave solutions $\Phi$ in the transonic regime,
that is, with speed close to $\sqrt{2}.$ In this section, we compute the
equation satisfied by the real and imaginary parts of $\Phi.$ Formal
computation has already been done in \cite{J}. We point out that in
\cite{Ber0,Chiron3}, the transonic limit has been investigated in great
details. The approach taken there is writing the solution $\Phi$ in the form
$\rho e^{i\phi}$ and analyzing the equations satisfied by $\rho$ and $\phi.$
Here we study the real and imaginary parts directly, because as we shall see
later on, in this form, the real part can be determined from the imaginary
part through an ODE, which is technically relatively easier.

Let us write $\Phi$ as $f+ig,$ where $f=\operatorname{Re}\Phi$ and
$g=\operatorname{Im}\Phi$. Then the travelling wave equation  \eqref{GPe}  is
equivalent to the following system
\[
\left\{
\begin{array}
	[c]{c}%
	c\partial_{\tilde{x}}g=-\Delta f+\left(  f^{2}+g^{2}-1\right)  f,\\
	-c\partial_{\tilde{x}}f=-\Delta g+\left(  f^{2}+g^{2}-1\right)  g.
\end{array}
\right.
\]
Introducing new variables by
\[
x=\varepsilon \tilde{x},y=\varepsilon^2\tilde{y},
\]
we obtain
\begin{equation}
	\left\{
	\begin{array}
		[c]{c}%
		c\varepsilon\partial_{x}g=-\varepsilon^{4}\partial_{y}^{2}f-\varepsilon
		^{2}\partial_{x}^{2}f+\left(  f^{2}+g^{2}-1\right)  f,\\
		\\
		-c\varepsilon\partial_{x}f=-\varepsilon^{4}\partial_{y}^{2}g-\varepsilon
		^{2}\partial_{x}^{2}g+\left(  f^{2}+g^{2}-1\right)  g.
	\end{array}
	\right.  \label{fg}%
\end{equation}
We assume that the travelling speed $c$ has the form $c=c_{0}-\varepsilon^{2},$
where $c_{0}$ is a constant independent of $\varepsilon.$

We look for $f$ and $g$ in the form
\[
f=1+\varepsilon^{2}f_{1}+\varepsilon^{4}f_{2},\text{ }g=\varepsilon g_{1},
\]
where $f_{1},f_{2},g_{1}$ are unknown functions to be determined. Substituting
this into the system $\left(  \ref{fg}\right)  $ we get the following two
equations to be solved:%
\begin{equation}%
	\begin{aligned}
		\left(  c_{0}-\varepsilon^{2}\right)  \partial_{x}g_{1}=&
		\left[  2\left(  f_{1}+\varepsilon^{2}f_{2}\right)  +g_{1}^{2}+\left(
		\varepsilon f_{1}+\varepsilon^{3}f_{2}\right)  ^{2}\right]  \left(
		1+\varepsilon^{2}f_{1}+\varepsilon^{4}f_{2}\right) \\
		&-\varepsilon
		^{4}\left(  \partial_{y}^{2}f_{1}+\varepsilon^{2}\partial_{y}^{2}f_{2}\right)
		-\left(  \varepsilon^{2}\partial_{x}^{2}f_{1}+\varepsilon^{4}\partial_{x}%
		^{2}f_{2}\right),
	\end{aligned}
	\label{equa1}%
\end{equation}
and%
\begin{equation}%
	\begin{aligned}
		-\left(  c_{0}-\varepsilon^{2}\right)  \left(  \partial_{x}f_{1}%
		+\varepsilon^{2}\partial_{x}f_{2}\right)  =&\left[  2\left(  f_{1}+\varepsilon^{2}f_{2}\right)  +g_{1}^{2}+\left(
		\varepsilon f_{1}+\varepsilon^{3}f_{2}\right)  ^{2}\right]  g_{1}\\
		&-\varepsilon^{2}\partial_{y}%
		^{2}g_{1}-\partial_{x}^{2}g_{1}.
	\end{aligned}
	\label{equa2}%
\end{equation}
To solve these two equations, we will expand them into powers of $\varepsilon
$.

\subsection{The $O\left(  1\right)  $ terms }

Comparing the $O\left(  1\right)  $ terms in   \eqref{equa1} ,
we get
\begin{equation}
	c_{0}\partial_{x}g_{1}=2f_{1}+g_{1}^{2}.\label{f1g1-1}%
\end{equation}
On the other hand, comparing the $O\left(  1\right)  $ term in
\eqref{equa2} leads to
\begin{equation}
	-c_{0}\partial_{x}f_{1}=-\partial_{x}^{2}g_{1}+g_{1}\left(  2f_{1}+g_{1}%
	^{2}\right)  .\label{f1g1-2}%
\end{equation}
The consistency of the two equations   \eqref{f1g1-1}  and
 \eqref{f1g1-2} requires that
\[
-\frac{c_{0}}{2}\partial_{x}\left(  c_0\partial_{x}g_{1}-g_{1}^{2}\right)  =-\partial_{x}^{2}g_{1}+c_{0}g_1\partial_{x}g_{1}.
\]
Hence it is natural to set $c_{0}=\sqrt{2}.$ Equation   \eqref{f1g1-1}  then has the form
\begin{equation}
	f_{1}=\frac{\sqrt{2}}{2}\partial_{x}g_{1}-\frac{g_{1}^{2}}{2}.\label{f1g1-3}%
\end{equation}
This means that $f_{1}$ is essentially determined by $g_{1}.$

\subsection{The $o\left(  1\right)  $ terms and the consistent condition}

With the $O\left(  1\right)  $ terms being understood, we now divide the
$o\left(  1\right)  $ terms in the equation \eqref{equa1} by
$\varepsilon^{2}.$ This yields
\begin{align*}
	-\partial_{x}g_{1}= &  -\varepsilon^{2}\left( \partial_{y}^{2}f_{1}%
	+\varepsilon^{2}\partial_{y}^{2}f_{2}\right)  -\left(  \partial_{x}^{2}%
	f_{1}+\varepsilon^{2}\partial_{x}^{2}f_{2}\right)  +2f_{2}+\left(
	f_{1}+\varepsilon^{2}f_{2}\right)  ^{2}\nonumber\\
	&  +\left(  f_{1}+\varepsilon^{2}f_{2}\right)  \left\{  2f_{1}+g_{1}%
	^{2}+2\varepsilon^{2}f_{2}+\left(  \varepsilon f_{1}+\varepsilon^{3}%
	f_{2}\right)  ^{2}\right\}  .\label{f2}%
\end{align*}
The above equation can be written into the following more compact form:
\begin{equation}
	-\partial_{x}g_{1}=-\partial_{x}^{2}f_{1}+f_{1}\left(  2f_{1}+g_{1}%
	^{2}\right)  +2f_{2}+f_{1}^{2}+\Pi_{1},\label{Per1}%
\end{equation}
where the perturbation term $\Pi_{1}$ is defined by
\begin{equation}
\label{Pi-1}
\begin{aligned}
\Pi_1=-\e^2\partial_y^2f_1-\e^4\partial_y^2f_2-\e^2\partial_x^2f_2+6\e^2f_1f_2+\e^2(f_1+\e^2f_2)^3+3\e^4f_2^2+\e^2f_2g_1^2.	
\end{aligned}
\end{equation}
For convenience, we write the last four terms as
\begin{equation}
\label{Pi-2}
\Pi_2=6\e^2f_1f_2+\e^2(f_1+\e^2f_2)^3+3\e^4f_2^2+\e^2f_2g_1^2.
\end{equation}

To proceed, we define the function
\begin{equation}
	\Xi:=2f_{2}+f_{1}^{2}+\sqrt{2}\left(  \partial_{x}\left(  f_{1}g_{1}\right)
	+g_{1}^{2}\partial_{x}g_{1}\right)  .\label{Xi}%
\end{equation}
This definition is inspired by equations (A13) and (A14) of \cite{J}. Note
that $\Xi$ involves $f_{1},g_{1}$ and $f_{2},$ we can represent $\Xi$ in the following way

\begin{lemma}
	\label{first1}Suppose  \eqref{f1g1-3} and
	\eqref{Per1}  hold. Then we have%
	\[
	\Xi=-\partial_{x}g_{1}+\frac{1}{\sqrt{2}}\partial_{x}^{3}g_{1}-\left(
	\partial_{x}g_{1}\right)  ^{2}-\Pi_{1}.
	\]
	
\end{lemma}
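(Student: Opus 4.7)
The plan is to rearrange \eqref{Per1} to isolate the combination $2f_2+f_1^2$ that appears in the definition \eqref{Xi} of $\Xi$, substitute, and then eliminate all remaining occurrences of $f_1$ using \eqref{f1g1-3}. After the dust settles, everything nonlinear in $g_1$ should cancel, leaving only the stated expression plus $-\Pi_1$.

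Concretely, I would first read off from \eqref{Per1} that
\[
2f_2+f_1^2=-\partial_x g_1+\partial_x^2 f_1-f_1(2f_1+g_1^2)-\Pi_1,
\]
and plug this into \eqref{Xi} to obtain
\[
\Xi=-\partial_x g_1+\partial_x^2 f_1-f_1(2f_1+g_1^2)+\sqrt{2}\,\partial_x(f_1g_1)+\sqrt{2}\,g_1^2\partial_x g_1-\Pi_1.
\]
Next I would use the identity \eqref{f1g1-3}, which says $2f_1+g_1^2=\sqrt{2}\,\partial_x g_1$, so that the term $f_1(2f_1+g_1^2)$ becomes $\sqrt{2}\,f_1\partial_x g_1$. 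Expanding $\sqrt{2}\,\partial_x(f_1g_1)=\sqrt{2}(\partial_x f_1)g_1+\sqrt{2}\,f_1\partial_x g_1$, the two $\sqrt{2}\,f_1\partial_x g_1$ terms cancel immediately.

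What remains is to differentiate \eqref{f1g1-3}: $\partial_x f_1=\tfrac{\sqrt{2}}{2}\partial_x^2 g_1-g_1\partial_x g_1$ and $\partial_x^2 f_1=\tfrac{\sqrt{2}}{2}\partial_x^3 g_1-(\partial_x g_1)^2-g_1\partial_x^2 g_1$. Substituting these gives
\[
\Xi=-\partial_x g_1+\tfrac{1}{\sqrt{2}}\partial_x^3 g_1-(\partial_x g_1)^2-g_1\partial_x^2 g_1+\bigl(g_1\partial_x^2 g_1-\sqrt{2}\,g_1^2\partial_x g_1\bigr)+\sqrt{2}\,g_1^2\partial_x g_1-\Pi_1,
\]
where I used $\tfrac{\sqrt{2}}{2}=\tfrac{1}{\sqrt{2}}$. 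The $g_1\partial_x^2 g_1$ terms cancel in pairs, as do the $\sqrt{2}\,g_1^2\partial_x g_1$ terms, yielding exactly the claimed identity.

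The only obstacle is bookkeeping: one must be careful that the linearization $2f_1+g_1^2=\sqrt{2}\partial_x g_1$ and the second derivative of the ODE \eqref{f1g1-3} are used in exactly the right places so that the three nonlinear quantities $\sqrt{2}f_1\partial_x g_1$, $g_1\partial_x^2 g_1$, and $\sqrt{2}g_1^2\partial_x g_1$ each appear twice with opposite signs. Since $\Pi_1$ is carried along as an inert remainder throughout, there is no analytic content beyond this verification.
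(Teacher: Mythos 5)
Your proposal is correct and follows essentially the same route as the paper: eliminate $2f_{2}+f_{1}^{2}$ from \eqref{Xi} using \eqref{Per1}, then use \eqref{f1g1-3} (equivalently $2f_{1}+g_{1}^{2}=\sqrt{2}\,\partial_{x}g_{1}$ and its $x$-derivatives) to reduce everything to $g_{1}$, after which the nonlinear terms cancel in pairs. The only difference is cosmetic bookkeeping — you cancel $\sqrt{2}f_{1}\partial_{x}g_{1}$ before substituting the explicit formula for $f_{1}$, whereas the paper substitutes first and expands — so there is nothing to add.
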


\begin{proof}
	Eliminating $f_{2}$ in  \eqref{Xi} by using equation
	\eqref{Per1} , we get%
	\[
	\Xi=-\partial_{x}g_{1}+\partial_{x}^{2}f_{1}-f_{1}\left(  2f_{1}+g_{1}%
	^{2}\right)  +\sqrt{2}\left(  \partial_{x}\left(  f_{1}g_{1}\right)
	+g_{1}^{2}\partial_{x}g_{1}\right) -\Pi_{1}.
	\]
	Inserting  \eqref{f1g1-1} into the right hand side, we
	compute%
	\begin{align*}
		\Xi  =& -\partial_{x}g_{1}+\partial_{x}^{2}\left(  \frac{\partial_{x}g_{1}%
		}{\sqrt{2}}-\frac{g_{1}^{2}}{2}\right)  -\sqrt{2}\left(  \frac{\partial
			_{x}g_{1}}{\sqrt{2}}-\frac{g_{1}^{2}}{2}\right)  \partial_{x}g_{1}\\
		&  +\sqrt{2}\left( \partial_{x}\left(  \frac{g_{1}\partial_{x}g_{1}}{\sqrt
			{2}}-\frac{g_{1}^{3}}{2}\right)  +g_{1}^{2}\partial_{x}g_{1}\right)  -\Pi_{1}\\
		 =& -\partial_{x}g_{1}+\frac{1}{\sqrt{2}}\partial_{x}^{3}g_{1}-\frac{1}%
		{2}\partial_{x}^{2}\left(  g_{1}^{2}\right)  -\left(  \partial_{x}%
		g_{1}\right)  ^{2}+\frac{1}{\sqrt{2}}g_{1}^{2}\partial_{x}g_{1}\\
		&  +\partial_{x}\left(  g_{1}\partial_{x}g_{1}\right)  -\frac{1}{\sqrt{2}%
		}\partial_{x}\left(  g_{1}^{3}\right)  +\sqrt{2}g_{1}^{2}\partial_{x}%
		g_{1}-\Pi_{1}.
	\end{align*}
	Hence
	\begin{equation}
		\Xi=-\partial_{x}g_{1}+\frac{1}{\sqrt{2}}\partial_{x}^{3}g_{1}-\left(
		\partial_{x}g_{1}\right)  ^{2}-\Pi_{1}.\label{Fi}%
	\end{equation}
	This finishes the proof.
\end{proof}

Next let us divide the $o\left(  1\right)  $ terms in  \eqref{equa2} by $\varepsilon^{2}.$ This leads to
\begin{equation}
\label{Per2}
-(\sqrt{2}-\e^2)\partial_xf_2=-\partial_xf_1-\partial_y^2g_1+g_1(2f_2+f_1^2)+\Pi_3,
\end{equation}
where the perturbation term $\Pi_3$ is given by
\begin{equation}
\label{Pi-3}
\Pi_3=2\e^2f_1f_2g_1+\e^4g_1f_2^2.
\end{equation}
We shall use this information to compute $\partial_{x}\Xi.$ We have
the following

\begin{lemma}
Suppose \eqref{f1g1-3}, \eqref{Per1} , \eqref{Per2} hold. Then
\begin{equation*}
\begin{aligned}
\partial_x\Xi=~&\partial_x^2g_1\left(\frac{\sqrt{2}}{\sqrt{2}-\e^2}+4\partial_xg_1\right)+\frac{2}{\sqrt{2}-\e^2}\left(\partial_y^2g_1+g_1\Pi_1-\Pi_3\right)\\
&+\frac{\e^2}{2-\sqrt{2}\e^2}\partial_xg_1\partial_x(g_1^2)-\frac{\e^2}{2\sqrt{2}-2\e^2}g_1^2\partial_x(g_1^2)-\frac{\sqrt{2}\e^2}{\sqrt{2}-\e^2}g_1\partial_x^2f_1.	
\end{aligned}
\end{equation*}	
\end{lemma}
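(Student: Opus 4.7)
The plan is to differentiate the definition
\[
\Xi = 2f_{2}+f_{1}^{2}+\sqrt{2}\bigl(\partial_{x}(f_{1}g_{1})+g_{1}^{2}\partial_{x}g_{1}\bigr)
\]
with respect to $x$, giving
\[
\partial_{x}\Xi = 2\partial_{x}f_{2} + 2 f_{1}\partial_{x}f_{1} + \sqrt{2}\bigl[\partial_{x}^{2}(f_{1}g_{1}) + \partial_{x}(g_{1}^{2}\partial_{x}g_{1})\bigr],
\]
and then substitute equation \eqref{Per2} to eliminate the $\partial_{x}f_{2}$ term. Writing $c=\sqrt{2}-\e^{2}$, this yields
\[
2\partial_{x}f_{2} = \frac{2}{\sqrt{2}-\e^{2}}\Bigl[\partial_{x}f_{1} + \partial_{y}^{2}g_{1} - g_{1}(2f_{2}+f_{1}^{2}) - \Pi_{3}\Bigr].
\]
The factors $\frac{\sqrt{2}}{\sqrt{2}-\e^2}$ and $\frac{2}{\sqrt{2}-\e^2}$ appearing in the target expression already suggest that this substitution should be kept in unexpanded form rather than Taylor-expanded in $\e^{2}$.

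Next, I would use the original definition of $\Xi$ to reinsert $2f_{2}+f_{1}^{2}=\Xi -\sqrt{2}\partial_{x}(f_{1}g_{1}) -\sqrt{2}g_{1}^{2}\partial_{x}g_{1}$, and then apply Lemma~\ref{first1} to substitute $\Xi = -\partial_{x}g_{1}+\frac{1}{\sqrt{2}}\partial_{x}^{3}g_{1}-(\partial_{x}g_{1})^{2}-\Pi_{1}$. The leading-order terms here are the clean KP-I-type contributions $\partial_{x}^{2}g_{1}\cdot\frac{\sqrt{2}}{\sqrt{2}-\e^{2}}$, $\frac{2}{\sqrt{2}-\e^{2}}\partial_{y}^{2}g_{1}$, $\frac{2}{\sqrt{2}-\e^{2}}g_{1}\Pi_{1}$ and $-\frac{2}{\sqrt{2}-\e^{2}}\Pi_{3}$; the remaining $g_{1}$-dependent pieces collected from $-\frac{2}{\sqrt{2}-\e^2}g_1\bigl(-\sqrt{2}\partial_x(f_1g_1)-\sqrt{2}g_1^2\partial_x g_1\bigr)$ and from the second and third terms of $\partial_{x}\Xi$ are then rewritten using \eqref{f1g1-3}, i.e.\ $f_{1}=\frac{\sqrt{2}}{2}\partial_{x}g_{1}-\frac{g_{1}^{2}}{2}$.

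The closing step is purely algebraic: expand $2f_{1}\partial_{x}f_{1}=\partial_{x}(f_{1}^{2})$ and $\sqrt{2}\partial_{x}^{2}(f_{1}g_{1})+\sqrt{2}\partial_{x}(g_{1}^{2}\partial_{x}g_{1})$ using \eqref{f1g1-3}, and collect terms. The cubic-in-$g_{1}$ pieces without $\e^{2}$ that appeared when proving Lemma~\ref{first1} should cancel as they did there, while a residual $\e^{2}$ accounting remains because the substitution of $\partial_{x}f_{2}$ introduces the factor $\frac{1}{\sqrt{2}-\e^{2}}$ rather than $\frac{1}{\sqrt{2}}$. Writing $\frac{1}{\sqrt{2}-\e^{2}}=\frac{1}{\sqrt{2}}+\frac{\e^{2}}{\sqrt{2}(\sqrt{2}-\e^{2})}$ locally, these residuals organize precisely into the three perturbative terms $\tfrac{\e^{2}}{2-\sqrt{2}\e^{2}}\partial_{x}g_{1}\partial_{x}(g_{1}^{2})$, $-\tfrac{\e^{2}}{2\sqrt{2}-2\e^{2}}g_{1}^{2}\partial_{x}(g_{1}^{2})$, and $-\tfrac{\sqrt{2}\e^{2}}{\sqrt{2}-\e^{2}}g_{1}\partial_{x}^{2}f_{1}$ on the right-hand side.

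The main obstacle is entirely bookkeeping: the two independent substitutions (the definition of $\Xi$ and Lemma~\ref{first1}) produce many cubic and quartic terms in $g_{1}$ and its derivatives, and one must verify that all $\e^{0}$-order cancellations among them work identically to the proof of Lemma~\ref{first1}, leaving only the $\e^{2}$-corrections dictated by the difference between $c=\sqrt{2}-\e^{2}$ and $c_{0}=\sqrt{2}$, together with the untouched $\Pi_{1}$ and $\Pi_{3}$ remainders. No analytic input beyond elementary identities is needed.
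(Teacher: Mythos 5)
Your proposal is correct and follows essentially the same route as the paper: differentiate \eqref{Xi}, eliminate $\partial_x f_2$ via \eqref{Per2}, eliminate $2f_2+f_1^2$, and reduce everything to $g_1$ using \eqref{f1g1-3}, keeping the factors $\frac{1}{\sqrt{2}-\e^2}$ unexpanded so that the $\e^2$-residuals collect into the three stated perturbative terms. The only cosmetic difference is that you remove $2f_2+f_1^2$ through the definition of $\Xi$ combined with Lemma \ref{first1}, whereas the paper substitutes \eqref{Per1} directly; since Lemma \ref{first1} is precisely \eqref{Per1} rewritten with \eqref{f1g1-3}, the two bookkeepings are algebraically identical.
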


\begin{proof}
First of all, using  \eqref{Per2} to eliminate $\partial_{x}f_{2},$ we have%
\begin{align*}
\partial_x\Xi
=~&\partial_x\left(2f_2+f_1^2+\sqrt{2}(\partial_x(f_1g_1)+g_1^2\partial_xg_1)\right)\\
=~&\frac{2}{\sqrt{2}-\e^2}\left(\partial_xf_1+\partial_y^2g_1-g_1(2f_2+f_1^2)-\Pi_3\right)\\&
+\partial_x\left(f_1^2+\sqrt{2}(\partial_x(f_1g_1)+g_1^2\partial_xg_1)\right).
\end{align*}
Using equation   \eqref{Per1},  we can eliminate $f_{2}$
appeared on the right hand side. It follows that%
\begin{align*}
\partial_{x}\Xi   =~&\frac{2}{\sqrt{2}-\e^2}\left(\partial_{x}f_{1}
+\partial_{y}^{2}g_1\right)%
-\frac{2}{\sqrt{2}-\e^2}g_{1}\left(  -\partial_{x}g_{1}+\partial_{x}^{2}f_{1}%
-f_{1}\left(  2f_{1}+g_{1}^{2}\right)\right)  \\
&+\frac{2}{\sqrt{2}-\e^2}(g_1\Pi_1-\Pi_3)+\partial_{x}\left(  f_{1}^{2}+\sqrt{2}\left(\partial_{x}\left(  f_{1}g_{1}\right)  +g_{1}^{2}\partial_{x}g_{1}\right)  \right)  \\
=~&\frac{2}{\sqrt{2}-\e^2}\left(\partial_xf_1+\partial_y^2g_1+g_1\partial_xg_1-g_1\partial_x^2f_1+g_1f_1(2f_1+g_1^2)\right)\\
&+\frac{2}{\sqrt{2}-\e^2}(g_1\Pi_1-\Pi_3)+\partial_x(f_1^2)+\sqrt{2}\partial_x(g_1^2\partial_xg_1)\\
&+\sqrt{2}\left(g_1\partial_x^2f_1+2\partial_xg_1\partial_xf_1+\partial_x^2g_1f_1\right)\\
=~&\frac{2}{\sqrt{2}-\e^2}\left(\partial_xf_1+\partial_y^2g_1+g_1\partial_xg_1+g_1f_1(2f_1+g_1^2)\right)+\frac{2}{\sqrt{2}-\e^2}(g_1\Pi_1-\Pi_3)\\
&+\partial_x(f_1^2)
+\sqrt{2}(f_1\partial_x^2g_1+2\partial_xf_1\partial_xg_1)+\sqrt{2}\partial_x(g_1^2\partial_xg_1)-\frac{\sqrt{2}\e^2}{\sqrt{2}-\e^2}g_1\partial_x^2f_1.
\end{align*}
	To further simplify the expression, we use   \eqref{f1g1-3}  to
	compute
	\begin{align*}
		\partial_{x}\Xi  =~&\frac{2}{\sqrt{2}-\e^2}\left(\partial_xf_1+\partial_y^2g_1+g_1\partial_xg_1\right)+\sqrt{2}f_1\partial_x^2g_1+2\sqrt{2}\partial_xf_1\partial_xg_1+\partial_x(f_1^2)\\
		&+\frac{2\sqrt{2}}{\sqrt{2}-\e^2}f_1g_1\partial_xg_1
		+\frac{2}{\sqrt{2}-\e^2}(g_1\Pi_1-\Pi_3)+\sqrt{2}\partial_x(g_1^2\partial_xg_1)-\frac{\sqrt{2}\e^2}{\sqrt{2}-\e^2}g_1\partial_x^2f_1\\
		=~&\sqrt{2}\partial_xf_1\left(\frac{\sqrt{2}}{\sqrt{2}-\e^2}+2\partial_xg_1+\sqrt{2}f_1\right)+\frac{2}{\sqrt{2}-\e^2}\partial_y^2g_1+\frac{1}{\sqrt{2}-\e^2}\partial_x(g_1^2)+\sqrt{2}\partial_x(g_1^2\partial_xg_1)\\
		&+\sqrt{2}f_1\left(\partial_x^2g_1+\frac{2}{\sqrt{2}-\e^2}g_1\partial_xg_1\right)+\frac{2}{\sqrt{2}-\e^2}(g_1\Pi_1-\Pi_3)
		-\frac{\sqrt{2}\e^2}{\sqrt{2}-\e^2}g_1\partial_x^2f_1.
	\end{align*}
	Hence
	\begin{align*}
		\partial_{x}\Xi =~&\left(  \partial_{x}^{2}g_{1}-\frac{\sqrt{2}}{2}%
		\partial_{x}\left(  g_{1}^{2}\right)  \right)  \left(  \frac{\sqrt{2}}{\sqrt{2}-\e^2}+2\partial_{x}%
		g_{1}+\left(  \partial_{x}g_{1}-\frac{g_{1}^{2}}{\sqrt{2}}\right)  \right)\\
		&+\frac{2}{\sqrt{2}-\e^2}\left(\partial_{y}^{2}g_{1}+g_{1}\partial_{x}g_{1}\right)+\frac{2}{\sqrt{2}-\e^2}
		\left(g_1\Pi_1-\Pi_3\right)+\sqrt{2}\partial_{x}\left(  g_{1}^{2}\partial
		_{x}g_{1}\right)\\
		&  +\left(  \partial_{x}g_{1}-\frac{g_{1}^{2}}{\sqrt{2}}\right)  \left(
		\partial_{x}^{2}g_{1}+\frac{2}{\sqrt{2}-\e^2}g_{1}\partial_{x}g_{1}\right)-\frac{\sqrt{2}\e^2}{\sqrt{2}-\e^2}g_1\partial_x^2f_1\\
		=~&\partial_{x}^{2}g_{1}\left(\frac{\sqrt{2}}{\sqrt{2}-\e^2}+3\partial_xg_1-\frac{g_1^2}{\sqrt{2}}+\partial_xg_1-\frac{g_1^2}{\sqrt{2}}+\sqrt{2}g_1^2\right)+\frac{2}{\sqrt{2}-\e^2}\partial_y^2g_1\\
		&-\frac{\sqrt{2}}{2}\partial_x(g_1^2)\left(\frac{\sqrt{2}}{\sqrt{2}-\e^2}+3\partial_xg_1-\frac{g_1^2}{\sqrt{2}}-\frac{\sqrt{2}}{\sqrt{2}-\e^2}-\frac{\sqrt{2}}{\sqrt{2}-\e^2}(\partial_xg_1-\frac{g_1^2}{\sqrt{2}})-2\partial_xg_1\right)\\
	    &+\frac{2}{\sqrt{2}-\e^2}(g_1\Pi_1-\Pi_3)-\frac{\sqrt{2}\e^2}{\sqrt{2}-\e^2}g_1\partial_x^2f_1.
	\end{align*}
	Direct computation shows that
	\begin{equation}
	\label{dFi}
	\begin{aligned}
	\partial_{x}\Xi=~&\partial_x^2g_1\left(\frac{\sqrt{2}}{\sqrt{2}-\e^2}+4\partial_xg_1\right)+\frac{2}{\sqrt{2}-\e^2}\partial_y^2g_1+\frac{2}{\sqrt{2}-\e^2}(g_1\Pi_1-\Pi_3)\\
	&+\frac{\e^2}{2-\sqrt{2}\e^2}\partial_xg_1\partial_x(g_1^2)
	-\frac{\e^2}{2\sqrt{2}-2\e^2}g_1^2\partial_x(g_1^2)-\frac{\sqrt{2}\e^2}{\sqrt{2}-\e^2}g_1\partial_x^2f_1.
    \end{aligned}
    \end{equation}
	The proof is thus completed.
\end{proof}

Observe that we can also use Lemma \ref{first1} to compute $\partial_{x}\Xi.$
Consistency for  \eqref{Fi} and  \eqref{dFi}
requires that $g_{1}$ satisfies the following equation:%
\begin{align*}
&\partial_x\left(-\partial_xg_1+\frac{1}{\sqrt{2}}\partial_x^3g_1-(\partial_xg_1)^2-\Pi_1\right)\\
&=\partial_x^2g_1\left(\frac{\sqrt{2}}{\sqrt{2}-\e^2}+4\partial_xg_1\right)+\frac{2}{\sqrt{2}-\e^2}\partial_y^2g_1+\frac{2}{\sqrt{2}-\e^2}(g_1\Pi_1-\Pi_3)\\
&\quad+\frac{\e^2}{2-\sqrt{2}\e^2}\partial_xg_1\partial_x(g_1^2)
-\frac{\e^2}{2\sqrt{2}-2\e^2}g_1^2\partial_x(g_1^2)-\frac{\sqrt{2}\e^2}{\sqrt{2}-\e^2}g_1\partial_x^2f_1.
\end{align*}
It can be regarded as a perturbed KP-I equation:
\begin{equation}
\label{pKPI}
\begin{aligned}
&\frac{1}{\sqrt{2}}\partial_x^4g_1-\frac{2\sqrt{2}-\e^2}{\sqrt{2}-\e^2}\partial_x^2g_1-3\partial_x\left((\partial_xg_1)^2\right)-\frac{2}{\sqrt{2}-\e^2}\partial_y^2g_1\\
&=\partial_x\Pi_1+\frac{2}{\sqrt{2}-\e^2}(g_1\Pi_1-\Pi_3)+\frac{\e^2}{2-\sqrt{2}\e^2}\partial_xg_1\partial_x(g_1^2)\\
&\quad-\frac{\e^2}{2\sqrt{2}-2\e^2}g_1^2\partial_x(g_1^2)
-\frac{\sqrt{2}\e^2}{\sqrt{2}-\e^2}g_1\partial_x^2f_1.
\end{aligned}
\end{equation}
Substituting \eqref{Pi-1} and \eqref{Pi-2} into $\partial_x\Pi_1+\frac{2}{\sqrt{2}-\e^2}g_1\Pi_1$ we have
\begin{equation}
\label{2xp-p-1}
\begin{aligned}
\partial_x\Pi_1+\frac{2}{\sqrt{2}-\e^2}g_1\Pi_1
=~&-\e^2\partial_x\partial_y^2f_1-\e^4\partial_x\partial_y^2f_2-\e^2\partial_x^3f_2+\partial_x\Pi_2\\
&-\frac{2}{\sqrt{2}-\e^2}g_1\left(\e^2\partial_y^2f_1+\e^4\partial_y^2f_2+\e^2\partial_x^2f_2-\Pi_2\right).
\end{aligned}	
\end{equation}
Using \eqref{Per2}, we can further write \eqref{2xp-p-1} as
\begin{equation}
\label{2xp-p}
\begin{aligned}
&\partial_x\Pi_1+\frac{2}{\sqrt{2}-\e^2}g_1\Pi_1\\
&=-\e^2\partial_x\partial_y^2\left(\frac{\sqrt{2}}{2}\partial_xg_1-\frac{g_1^2}{2}\right)-\frac{2\e^2}{\sqrt{2}-\e^2}g_1\partial_y^2\left(\frac{\sqrt{2}}{2}\partial_xg_1-\frac{g_1^2}{2}\right)+\partial_x\Pi_2+\frac{2}{\sqrt{2}-\e^2}g_1\Pi_2\\	
&\quad-\frac{\e^4}{\sqrt{2}-\e^2}\partial_y^2\left(\partial_xf_1+\partial_y^2g_1-g_1f_1^2-\Pi_3\right)+\frac{2\e^4}{\sqrt{2}-\e^2}\left(\partial_y^2g_1f_2+2\partial_yg_1\partial_yf_2\right)\\
&\quad-\frac{\e^2}{\sqrt{2}-\e^2}\partial_x^2\left(\partial_xf_1+\partial_y^2g_1-g_1f_1^2-\Pi_3\right)+\frac{2\e^2}{\sqrt{2}-\e^2}\left(\partial_x^2g_1f_2+2\partial_xg_1\partial_xf_2\right)\\
&=-\frac{2}{\sqrt{2}-\e^2}\e^2\partial_x^2\partial_y^2g_1-\frac{\e^2}{2-\sqrt{2}\e^2}\partial_x^4g_1-\frac{\e^4}{\sqrt{2}-\e^2}\partial_y^4g_1+\Pi_4,		
\end{aligned}	
\end{equation}
where
\begin{equation}
\label{2.def-p}
\begin{aligned}
\Pi_4=~&\frac{\e^2}{2}\partial_x\partial_y^2(g_1^2)-\frac{\sqrt{2}\e^2}{\sqrt{2}-\e^2}g_1\partial_x\partial_y^2g_1+\frac{\e^2}{\sqrt{2}-\e^2}g_1\partial_y^2g_1^2+\frac{\e^4}{\sqrt{2}-\e^2}\partial_y^2(g_1\partial_xg_1)\\
&+\frac{\e^2}{\sqrt{2}-\e^2}\partial_x^2(g_1\partial_xg_1)+\frac{\e^4}{\sqrt{2}-\e^2}\partial_y^2(g_1f_1^2+\Pi_3)+\frac{\e^2}{\sqrt{2}-\e^2}\partial_x^2(g_1f_1^2+\Pi_3)\\
&+\frac{2\e^4}{\sqrt{2}-\e^2}\left(\partial_y^2g_1f_2+2\partial_yg_1\partial_yf_2\right)+\frac{2\e^2}{\sqrt{2}-\e^2}\left(\partial_x^2g_1f_2+2\partial_xg_1\partial_xf_2\right)\\
&+\partial_x\Pi_2+\frac{2}{\sqrt{2}-\e^2}g_1\Pi_2.	
\end{aligned}	
\end{equation}
Based on \eqref{2xp-p} and \eqref{2.def-p} we can
write \eqref{pKPI} as
\begin{equation}
\label{pKPI-1}
\begin{aligned}
&\partial_x^4g_1-(2\sqrt{2}-\e^2)\partial_x^2g_1-3(\sqrt{2}-\e^2)\partial_x((\partial_xg_1)^2)-2\partial_y^2g_1+2\e^2\partial_x^2\partial_y^2g_1+\e^4\partial_y^4g_1\\
&=(\sqrt{2}-\e^2)\Pi_4-2\Pi_3+\frac{1}{\sqrt{2}}\e^2\partial_xg_1\partial_x(g_1^2)-\frac{1}{2}\e^2g_1^2\partial_x(g_1^2)-\sqrt{2}\e^2g_1\partial_x^2f_1.
\end{aligned}
\end{equation}
We denote the right hand side of \eqref{pKPI-1} by $P$ and it can be written as
\begin{equation}
\label{2.def.p}
\begin{aligned}
P=~&\left(\frac{\sqrt{2}-\e^2}{2}\right)\e^2\partial_x\partial_y^2(g_1^2)-\sqrt{2}\e^2g_1\partial_x\partial_y^2g_1+\e^2g_1\partial_y^2(g_1^2)+\e^4\partial_y^2(g_1\partial_xg_1)\\
&+\e^2\partial_x^2(g_1\partial_xg_1)+\e^4\partial_y^2\left(g_1f_1^2+2\e^2f_1f_2g_1+\e^4g_1f_2^2\right)\\
&+\e^2\partial_x^2\left(g_1f_1^2+2\e^2f_1f_2g_1+\e^4g_1f_2^2\right)+2\e^4(\partial_y^2g_1f_2+2\partial_yg_1\partial_yf_2)\\
&+2\e^2(\partial_x^2g_1f_2+2\partial_xg_1\partial_xf_2)-2(2\e^2f_1f_2g_1+\e^4g_1f_2^2)
\\&+(\sqrt{2}-\e^2)\partial_x\left(6\e^2f_1f_2+\e^2(f_1+\e^2f_2)^3+3\e^4f_2^2+\e^2f_2g_1^2\right)\\
&+2g_1\left(6\e^2f_1f_2+\e^2(f_1+\e^2f_2)^3+3\e^4f_2^2+\e^2f_2g_1^2\right)\\
&+\frac{1}{\sqrt{2}}\e^2\partial_xg_1\partial_xg_1^2-\frac12\e^2g_1^2\partial_x(g_1^2)-\sqrt{2}\e^2g_1\partial_x^2f_1.
\end{aligned}
\end{equation}
Among the terms in \eqref{2.def.p}, we shall analyze the following four terms $g_1\partial_x\partial_y^2g_1$, $g_1\partial_x^2f_1$, $\partial_y^2g_1f_2+2\partial_yg_1\partial_yf_2$ and $\partial_x^2g_1f_2+2\partial_xg_1\partial_xf_2$. For the first one we could rewrite it as
\begin{equation}
\label{2.ap-1}
\begin{aligned}
g_1\partial_x\partial_y^2g_1=\partial_x(g_1\partial_y^2g_1)-\partial_y(\partial_xg_1\partial_yg_1)+\frac12\partial_x((\partial_yg_1)^2).
\end{aligned}	
\end{equation}
Using \eqref{f1g1-3}, we could write the second one as
\begin{equation}
\label{2.ap-2}
\begin{aligned}
g_1\partial_x^2f_1=\partial_x(g_1\partial_xf_1)
-\frac{\sqrt{2}}{4}\partial_x((\partial_xg_1)^2)+\frac12\partial_xg_1\partial_x(g_1^2).	
\end{aligned}	
\end{equation}
For the third term $\partial_y^2g_1f_2+2\partial_yg_1\partial_yf_2$, based on \eqref{f1g1-3} and \eqref{Per2} we obtain that
\begin{equation}
\label{2.ap-3}
\begin{aligned}
(\partial_y^2g_1f_2+2\partial_yg_1\partial_yf_2)=~&2\partial_y(\partial_yg_1f_2)-\partial_y^2g_1f_2\\
=~&2\partial_y(\partial_yg_1f_2)+\left(\partial_xf_1-(\sqrt{2}-\e^2)\partial_xf_2-g_1(2f_2+f_1^2)\right)f_2\\
&-(2\e^2f_1f_2g_1+\e^4g_1f_2^2)f_2\\
=~&2\partial_y(\partial_yg_1f_2)-\frac{\sqrt{2}-\e^2}{2}\partial_x(f_2^2)-g_1(2f_2+f_1^2)f_2\\
&-(2\e^2f_1f_2g_1+\e^4g_1f_2^2)f_2+\partial_x(f_1f_2)\\
&+\frac{1}{\sqrt{2}-\e^2}\left(-\partial_xf_1-\partial_y^2g_1+g_1(2f_2+f_1^2)\right)f_1\\
&+\frac{1}{\sqrt{2}-\e^2}(2\e^2f_1f_2g_1+\e^4g_1f_2^2)f_1.
\end{aligned}
\end{equation}
By \eqref{f1g1-3} we also notice that
\begin{equation*}
\begin{aligned}
\partial_y^2g_1f_1=~&\partial_y(\partial_yg_1f_1)-\partial_yg_1\left(\frac{1}{\sqrt{2}}\partial_x\partial_yg_1-\frac12\partial_y(g_1^2)\right)\\
=~&\partial_y(\partial_yg_1f_1)-\frac{1}{2\sqrt{2}}\partial_x((\partial_yg_1)^2)+\frac{1}{2}\partial_yg_1\partial_y(g_1^2).
\end{aligned}
\end{equation*}
Then we can write \eqref{2.ap-3} as
\begin{equation}
\label{2.ap-4}
\begin{aligned}
(\partial_y^2g_1f_2+2\partial_yg_1\partial_yf_2)
=~&2\partial_y(\partial_yg_1f_2)-\frac{\sqrt{2}-\e^2}{2}\partial_x(f_2^2)+\partial_x(f_1f_2)-\frac{1}{2\sqrt{2}-2\e^2}\partial_xf_1^2\\
&-\frac{1}{\sqrt{2}-\e^2}\partial_y(\partial_yg_1f_1)
+\frac{1}{4-2\sqrt{2}\e^2}\partial_x((\partial_yg_1)^2)\\
&-\frac{1}{2\sqrt{2}-2\e^2}\partial_yg_1\partial_y(g_1^2)-g_1(2f_2+f_1^2)f_2\\
&-(2\e^2f_1f_2g_1+\e^4g_1f_2^2)f_2+\frac{1}{\sqrt{2}-\e^2}g_1(2f_2+f_1^2)f_1\\
&+\frac{1}{\sqrt{2}-\e^2}(2\e^2f_1f_2g_1+\e^4g_1f_2^2)f_1.
\end{aligned}	
\end{equation}
Using \eqref{f1g1-3} and \eqref{Per2} again we rewrite $\partial_x^2g_1f_2+2\partial_xg_1\partial_xf_2$ as
\begin{equation}
\label{2.ap-5}
\begin{aligned}
(\partial_x^2g_1f_2+2\partial_xg_1\partial_xf_2)=~&
\partial_x(\partial_xg_1f_2)+\frac{1}{\sqrt{2}-\e^2}
\left(\partial_xf_1+\partial_y^2g_1-g_1(2f_2+f_1^2)\right)\partial_xg_1\\	
	&-\frac{1}{\sqrt{2}-\e^2}\partial_xg_1(2\e^2f_1f_2g_1+\e^4g_1f_2^2)\\
	=~&\partial_x(\partial_xg_1f_2)+\frac{1}{4-2\sqrt{2}\e^2}\partial_x((\partial_xg_1)^2)
	+\frac{1}{\sqrt{2}-\e^2}\partial_y(\partial_yg_1\partial_xg_1)\\
	&-\frac{1}{2\sqrt{2}-2\e^2}\partial_x((\partial_yg_1)^2)
	-\frac{1}{2(\sqrt{2}-\e^2)}\partial_x(g_1^2)\partial_xg_1\\
	&-\frac{1}{\sqrt{2}-\e^2}\left(g_1(2f_2+f_1^2)\partial_xg_1+\partial_xg_1(2\e^2f_1f_2g_1+\e^4g_1f_2^2)\right).
\end{aligned}	
\end{equation}
Substituting \eqref{2.ap-1}, \eqref{2.ap-2}, \eqref{2.ap-4} and \eqref{2.ap-5} into \eqref{2.def.p} we have
\begin{equation}
\label{2.dec-p}
P=P_1+P_2+P_3,	
\end{equation}
where
\begin{equation}
\label{2.dec-p1}
\begin{aligned}
P_1=~&\e^2\partial_x^2(g_1\partial_xg_1)+\e^2\partial_x^2(g_1f_1^2+2\e^2f_1f_2g_1+\e^4g_1f_2^2)-(\sqrt{2}-\e^2)\e^4\partial_x(f_2^2)\\
&-\frac{\e^4}{\sqrt{2}-\e^2}\partial_xf_1^2+2\e^2\partial_x(\partial_xg_1f_2)+\frac{2\e^2-\frac{\sqrt{2}}{2}\e^4}{2-\sqrt{2}\e^2}\partial_x(\partial_xg_1)^2\\
&+(\sqrt{2}-\e^2)\partial_x\left(6\e^2f_1f_2+\e^2(f_1+\e^2f_2)^3+3\e^4f_2^2+\e^2f_2g_1^2\right)\\
&+2\e^4\partial_x(f_1f_2)-\sqrt{2}\e^2\partial_x(g_1\partial_xf_1),
\end{aligned}	
\end{equation}
\begin{equation}
\label{2.dec-p2}
\begin{aligned}
P_2=~&\sqrt{2}\e^2\partial_y(\partial_xg_1\partial_yg_1)+\e^4\partial_y^2\left(g_1f_1^2+2\e^2f_1f_2g_1+\e^4g_1f_2^2\right)\\
&+4\e^4\partial_y(\partial_yg_1f_2)-\frac{2\e^4}{\sqrt{2}-\e^2}\partial_y(\partial_yg_1f_1)+\frac{2\e^2}{\sqrt{2}-\e^2}\partial_y(\partial_yg_1\partial_xg_1),
\end{aligned}	
\end{equation}
and
\begin{equation}
\label{2.dec-p3}
\begin{aligned}
P_3=~&\e^2g_1\partial_y^2(g_1^2)-\frac{\e^4}{\sqrt{2}-\e^2}\partial_yg_1\partial_y(g_1^2)-2\e^4g_1(2f_2+f_1^2)f_2\\
&-2\e^4(2\e^2f_1f_2g_1+\e^4g_1f_2^2)f_2+\frac{2\e^4}{\sqrt{2}-\e^2}g_1(2f_2+f_1^2)f_1\\
&+\frac{2\e^4}{\sqrt{2}-\e^2}(2\e^2f_1f_2g_1+\e^4g_1f_2^2)f_1-\frac{\e^2}{\sqrt{2}-\e^2}\partial_x(g_1^2)\partial_xg_1\\
&-\frac{2\e^2}{\sqrt{2}-\e^2}\left(g_1(2f_2+f_1^2)\partial_xg_1+\partial_xg_1(2\e^2f_1f_2g_1+\e^4g_1f_2^2)\right)\\
&+2g_1\left(6\e^2f_1f_2+\e^2(f_1+\e^2f_2)^3+3\e^4f_2^2+\e^2f_2g_1^2\right)\\
&-2(2\e^2f_1f_2g_1+\e^4g_1f_2^2)-\frac12\e^2g_1^2\partial_x(g_1^2).
\end{aligned}	
\end{equation}
With \eqref{2.dec-p}, and \eqref{2.dec-p1}-\eqref{2.dec-p3} we write \eqref{pKPI-1} as
\begin{equation}
\label{2.equation}
\begin{aligned}
&\partial_x^4g_1-(2\sqrt{2}-\e^2)\partial_x^2g_1-3(\sqrt{2}-\e^2)\partial_x((\partial_xg_1)^2)-2\partial_y^2g_1+2\e^2\partial_x^2\partial_y^2g_1+\e^4\partial_y^4g_1\\
&=P_1+P_2+P_3.
\end{aligned}	
\end{equation}
Observe that $P_1$ and $P_2$ can be regarded as
two functions $\partial_xH_1$ and $\partial_yH_2$ respectively for some proper functions $H_1$ and $H_2$, and we shall show that $P_3$ decays faster than $r^{-3}$ at infinity. The advantage of writing \eqref{pKPI-1} as \eqref{2.equation} is that we could derive a better decay for the perturbation function.

Concerning the left hand side of \eqref{2.equation}, we shall look for solution $g_1=q+\phi$, where $q$ is a lump solution of a rescaled KP-I
\begin{equation}
\label{lump}
\partial_x^4q-(2\sqrt{2}-\e^2)\partial_x^2q-3\sqrt{2}\left(\frac{2\sqrt{2}-\e^2}{2\sqrt{2}}\right)^{5/2}\partial_x((\partial_xq)^2)-2\partial_y^2q=0.	\end{equation}

(Note that the constant in the third term in (\ref{lump}) is slightly different from the constant in (\ref{2.equation}) because we also collect the first term in $P_1$, for the convenience  of proof.)

Actually, \eqref{lump} is closely related to one of KP-I equation after suitable rescaling.  In the rest of the proof we take the standard lump solution
\begin{equation}
\label{2.lump}
q_\e(x,y)=-\left(\frac{2\sqrt{2}}{2\sqrt{2}-\e^2}\right)^2\dfrac{\sqrt{8-2\sqrt{2}\e^2}x}{\frac{2\sqrt{2}-\e^2}{2\sqrt{2}}x^2+\frac{(2\sqrt{2}-\e^2)^2}{4\sqrt{2}}y^2+\frac{3}{2\sqrt{2}}},	\end{equation}
whose nondegeneracy is known (\cite{Liu-Wei2}). The proof works exactly the same for other nondegenerate lump solutions. The associated unique negative eigenvalue and eigenfunction  with respect to $q_\e$ are well approximated by the ones of the limit equation (sending $\e$ to $0$)
\begin{equation}
\label{lump-0}
\partial_x^4q-2\sqrt{2}\partial_x^2q-3\sqrt{2}\partial_x((\partial_xq)
^2)-2\partial_y^2q=0,
\end{equation}
where the corresponding solution is
\begin{equation*}
\label{lump-limit}
q_0(x,y)=-\frac{2\sqrt{2}x}{x^2+\sqrt{2}y^2+\frac{3}{2\sqrt{2}}}.
\end{equation*}
As we can see that the linearized problem of \eqref{2.equation} is well approximated by the one of KP-I equation. However, we can not just ignore these two terms. Indeed, we have seen that $\partial_y^2f_2$ appears in $P$ and it is related to $\partial_y^4g_1$ by \eqref{Per2}, so we need to get some control on $\partial_y^4g_1$ in showing the existence of $f_2$.  In next section we shall study the Green kernel of the linear operator associated to \eqref{2.equation}.

\section{The Green Function}
In this section, we shall study the Green kernel $G(x,y)$ of the differential operator (introduced in section 2),
\begin{equation}
\label{3.green-kernel}
\partial_x^4-(2\sqrt{2}-\e^2)\partial_x^2-2\partial_y^2+\e^2\partial_x^2\partial_y^2+\e^4\partial_y^4.
\end{equation}
To simplify our discussion, we set all the constant coefficients to be $1$ (after suitable scaling), i.e.,
$$\partial_x^4-\partial_x^2-\partial_y^2+\e^2\partial_x^2\partial_y^2+\e^4\partial_y^4.$$
It is not difficult to see that the associated Green kernel (denoted by $K(x,y)$) has similar asymptotic behavior at singularity and decay properties at infinity as $G(x,y)$.

The Fourier transform of $K(x,y)$ can be written as
\begin{equation*}
\label{a.1}
\hk(\xi_1,\xi_2)=\frac{1}{\xi_1^4+\xi_1^2+\xi_2^2+\e^2\xi_1^2\xi_2^2+\e^4\xi_2^4}.
\end{equation*}
The main purpose of this section is devoted to studying the decay property and asymptotic behavior of the derivatives of $K(x,y)$. Based on the elementary computation from Fourier analysis, we know that
\begin{equation*}
\widehat{\partial_x^m\partial_y^nK}(\xi_1,\xi_2)=(-i)^{m+n}\xi_1^m\xi_2^n\hk(\xi_1,\xi_2).
\end{equation*}
In order to study the asymptotic behavior of $\partial_x^m\partial_y^nK(x,y)$, it is enough to compute the following integration when $r=\sqrt{x^2+y^2}$ tends to $\infty$ and $0$,
\begin{equation}
\label{a.2}
K_{m,n}(x,y):=\int_{\R}\int_{\R}\dfrac{\xi_1^m\xi_2^ne^{ix\xi_1+iy\xi_2}}{\xi_1^4+\xi_1^2+\xi_2^2+\e^2\xi_1^2\xi_2^2+\e^4\xi_2^4}
d\xi_1d\xi_2.
\end{equation}

In our problem,  the behavior of $K_{m,n}(x,y)$ for the following cases are needed
$$(m,n)\in\left\{(1,0),(2,0),(3,0),(0,1),(0,2),(0,3),(1,1),(1,2)\right\}.$$
Now we shall state the corresponding properties for $K_{m,n}(x,y)$ in the following two lemmas. The first result is as follows

\begin{lemma}
\label{lea.1}
Let $K_{m,0}(x,y)$ be defined in \eqref{a.2} with $m=1,2,3$. Then for $r$ small we have
\begin{equation*}
r|K_{m,0}(x,y)|\leq
\begin{cases}
C,\quad &\mathrm{for}\quad m=1,\\
C\left(\min\left\{\log\frac{1+\sqrt{|y|}}{\sqrt{|y|}},\log\frac{1}{\e}\right\}+e^{-\frac{C}{\e^2}y}\right),\quad &\mathrm{for}\quad m=2,\\
C\left(\min\left\{\frac{1}{\e},\frac{1}{\sqrt{|y|}}\right\}+\frac{1}{\e}\max\left\{1,\log\frac{\e^2}{|y|}\right\}
e^{-\frac{C}{\e^2}|y|}\right),\quad &\mathrm{for}\quad m=3.
\end{cases}
\end{equation*}
While for $r$ large we have
\begin{equation*}
r|K_{m,0}(x,y)|\leq
\begin{cases}
C,\quad &\mathrm{for}\quad m=1,\\
\frac{C}{r^{\frac12}},\quad &\mathrm{for}\quad m=2,\\
\frac{C}{\e^{\frac12}r^{\frac12}}\quad &\mathrm{for}\quad m=3.
\end{cases}
\end{equation*}
\end{lemma}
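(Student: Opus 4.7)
My plan is to integrate out $\xi_2$ first by contour methods and then analyze the resulting one-dimensional oscillatory integral in $\xi_1$. The denominator of the integrand in \eqref{a.2} is a biquadratic in $\xi_2$, namely $\e^{4}\xi_2^{4}+(1+\e^{2}\xi_1^{2})\xi_2^{2}+(\xi_1^{4}+\xi_1^{2})$; regarded as a quadratic in $\xi_2^{2}$ the product of its two roots is positive while their sum is negative, so both roots are strictly negative and the four roots of the full polynomial lie on the imaginary axis as $\pm i\alpha_1,\pm i\alpha_2$ with $0<\alpha_1<\alpha_2$. Closing the $\xi_2$-contour in the upper half-plane for $y>0$ (and using symmetry for $y<0$) and collecting residues gives
\begin{equation*}
K_{m,0}(x,y)=\pi\int_{\mathbb{R}}\frac{\xi_1^{m}\,e^{ix\xi_1}}{\sqrt{\Delta(\xi_1)}}\!\left(\frac{e^{-\alpha_1|y|}}{\alpha_1}-\frac{e^{-\alpha_2|y|}}{\alpha_2}\right)\!d\xi_1,
\end{equation*}
where $\Delta(\xi_1)=(1+\e^{2}\xi_1^{2})^{2}-4\e^{4}(\xi_1^{4}+\xi_1^{2})$ and a direct Taylor expansion gives $\alpha_1^{2}\simeq \xi_1^{2}(1+\xi_1^{2})+O(\e^{2}\xi_1^{2})$ together with $\alpha_2^{2}\geq c\e^{-4}$ uniformly. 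The $\alpha_2$-piece therefore always produces an exponentially small factor $e^{-C|y|/\e^{2}}$, which accounts for the last term in each of the stated bounds, while the $\alpha_1$-piece gives the leading behaviour and is essentially the Green kernel of the rescaled second-order operator $\partial_x^{4}-\partial_x^{2}-\partial_y^{2}$ analysed by Gravejat in \cite{Gra4}.

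\textbf{Small $r$ and case analysis.} For the $\alpha_1$-piece I would use a dyadic decomposition in $|\xi_1|$ distinguishing a KP regime $|\xi_1|\leq 1$ (where $\alpha_1\simeq|\xi_1|$ and $\sqrt{\Delta}\simeq 1$), an intermediate regime $1\leq|\xi_1|\leq 1/\e$ (where $\alpha_1\simeq \xi_1^{2}$), and a high regime $|\xi_1|\geq 1/\e$ (where the $\e^{2}$-corrections become active). For $m=1$, symmetry of the real part in $\xi_1$ reduces the integral to $\int_0^{\infty}\xi_1\sin(x\xi_1)\alpha_1^{-1}e^{-\alpha_1|y|}d\xi_1$ plus smaller pieces, and using $|\sin(x\xi_1)|\leq\min(|x\xi_1|,1)$ together with $\xi_1/\alpha_1\leq C$ yields $|K_{1,0}|\leq C/r$. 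For $m=2$ the integrand is bounded by $|\xi_1|e^{-|\xi_1||y|}$ on the KP scale and by $e^{-\xi_1^{2}|y|}$ on intermediate scales, which generates the $\log\frac{1+\sqrt{|y|}}{\sqrt{|y|}}$ bound from the window $|\xi_1|\sim 1/\sqrt{|y|}$, capped by $\log(1/\e)$ because the intermediate regime itself ends at $1/\e$. For $m=3$ the same analysis with an extra factor $|\xi_1|$ produces $\min\{1/\sqrt{|y|},1/\e\}$ from the $\alpha_1$-piece; the $\alpha_2$-piece then contributes the $1/\e$-factor with the $e^{-C|y|/\e^{2}}$ weight, since $\xi_1^{3}/\alpha_2\sim \e^{2}\xi_1^{3}$ must be integrated against a Gaussian-type weight whose effective length-scale is $1/\e$, which is what generates the logarithm $\max\{1,\log(\e^{2}/|y|)\}$.

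\textbf{Large $r$ and the main obstacle.} For $r$ large, the decay rate is dictated by the regularity of the integrand at its only real singularity $\xi_1=0$. For $m=1$ the odd part gives a Hilbert-transform-type kernel, yielding $|K_{1,0}|\leq C/r$. For $m\geq 2$, the factor $\xi_1^{m}/\alpha_1$ is only H\"older continuous at $0$ because $\alpha_1\simeq|\xi_1|$, so repeated integration by parts together with stationary-phase analysis in the anisotropic scaling natural for $\partial_x^{4}-\partial_x^{2}-\partial_y^{2}$ yields only the $r^{-3/2}$ rate; for $m=3$ the extra $\e^{-1/2}$ prefactor arises from balancing the high-frequency cutoff at $|\xi_1|\sim 1/\e$ against the stationary-phase scale. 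The principal technical obstacle will be to obtain estimates that are sharp in $r$ and uniform in $\e$ at the same time: the various caps by $\log(1/\e)$, $1/\e$, and $\e^{-1/2}$ encode the competition between the intrinsic scale $|y|$ inherited from the KP limit and the extrinsic scale $1/\e^{2}$ forced by the fourth-order perturbation, and one must exploit the cancellation between the $\alpha_1$- and $\alpha_2$-contributions in the residue formula in order to isolate the exponentially small remainder $e^{-C|y|/\e^{2}}$ cleanly while preserving the $\e$-uniformity of the main terms.
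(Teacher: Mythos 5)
Your contour step is the same as the paper's, but the structural claim it rests on is false in the region that actually produces the $\e$-dependent parts of the lemma. The roots of $\e^4\xi_2^4+(1+\e^2\xi_1^2)\xi_2^2+(\xi_1^4+\xi_1^2)$ are purely imaginary only while the discriminant $\Delta(\xi_1)=(1+\e^2\xi_1^2)^2-4\e^4(\xi_1^2+\xi_1^4)$ is nonnegative, i.e.\ for $|\xi_1|\le c_\e$ with $c_\e=\tfrac1\e-\tfrac\e2+O(\e^2)$; for $|\xi_1|>c_\e$ the two roots in $\xi_2^2$ are complex conjugates, so your picture of four roots $\pm i\alpha_1,\pm i\alpha_2$ with $0<\alpha_1<\alpha_2$ uniformly separated (and $\alpha_1^2\simeq\xi_1^2(1+\xi_1^2)$, $\alpha_2^2\ge c\e^{-4}$) breaks down exactly at $|\xi_1|\sim 1/\e$, where the two roots coalesce and the prefactor $1/\sqrt{\Delta(\xi_1)}$ in your residue formula has square-root branch singularities. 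The paper's proof is organized around precisely these points: it introduces a cutoff $\chi_\e$ supported near $\pm c_\e$, expands $D(\xi)\sim\sqrt{6\e^4c_\e(c_\e^2+d_\e^2)}\,|c_\e\mp\xi|^{1/2}$, and evaluates the resulting Fresnel-type integrals $\int e^{it}t^{-1/2}dt$ with an explicit coefficient $l_\e$. That local analysis is what yields the large-$r$ rate $r^{-3/2}$ together with the $\e^{-1/2}$ loss for $m=3$, and it also feeds several of the small-$r$ terms (the $\tfrac1\e\max\{1,\log\tfrac{\e^2}{|y|}\}e^{-C|y|/\e^2}$ factor for $m=3$, and part of the $e^{-C|y|/\e^2}$ terms, which in the paper come from the tail $|\xi_1|\ge 2c_\e$ of the "$a$-root'' piece and from the neighborhood of $\pm c_\e$, not only from the large root as you assert). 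Your plan instead locates the only real singularity at $\xi_1=0$ and treats $|\xi_1|\sim1/\e$ as a mere "high-frequency cutoff'' to be balanced against stationary phase; with your (incorrect) premise the integrand looks smooth there, and the argument as described cannot produce the $\e^{-1/2}r^{-3/2}$ bound for $m=3$, nor the stated $\e$-uniform small-$r$ bounds. In other words, the "principal technical obstacle'' you flag at the end — sharpness in $r$ uniformly in $\e$ — is exactly the part for which your proposal supplies no correct mechanism.

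Two smaller points: the singularity of $M_m(\xi)=\xi^m/(\dots)$ at $\xi=0$ is of type $\mathrm{sgn}(\xi)\xi^{m-1}$ (so for $m=1$ integration by parts produces the nonzero boundary term $\tfrac{2ix}{x^2+y^2}$, which your $\sin$-estimate sketch does not capture as a $1/r$ bound jointly in $x$ and $y$; the paper gets $1/r$ from the factor $\bigl(ix-\tfrac{a'(\xi)}{2\sqrt{a(\xi)}}y\bigr)^{-1}$), and the case $y=0$ needs a separate treatment for $m=3$ since $M_3$ tends to a constant of size $1/\e$ at infinity and the paper subtracts an explicit $\mathrm{sgn}(\xi)$ multiple before integrating by parts. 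Neither of these is fatal by itself, but both need to be added even after the root/branch-point issue is repaired.
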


\begin{proof}
By definition we have
\begin{equation}
\label{a.3}
K_{m,0}(x,y)=\int_{\R}\int_{\R}\dfrac{\xi_1^me^{ix\xi_1+iy\xi_2}}{\xi_1^4+\xi_1^2+\xi_2^2+\e^2\xi_1^2\xi_2^2+\e^4\xi_2^4}
d\xi_1d\xi_2.
\end{equation}
Concerning the denominator, we write it as
\begin{equation*}
\xi_1^4+\xi_1^2+\xi_2^2+\e^2\xi_1^2\xi_2^2+\e^4\xi_2^4
=\e^4(\xi_2^2+a(\xi_1))(\xi_2^2+b(\xi_1)),
\end{equation*}
where $a(\xi_1),~b(\xi_1)$ are functions of $\xi_1$ defined as:
\begin{equation*}
\begin{aligned}
a(\xi_1)=\frac{1+\e^2\xi_1^2-D(\xi_1)}{2\e^4},\quad b(\xi_1)=\frac{1+\e^2\xi_1^2+D(\xi_1)}{2\e^4},
\end{aligned}
\end{equation*}
and
\begin{equation*}
D(x):=\sqrt{(1+\e^2x)^2-4\e^4(x^2+x^4)}=\sqrt{1+2\e^2x^2-4\e^4x^2-3\e^4x^4}.
\end{equation*}
The function $D(x)$ can be decomposed by
\begin{equation*}
D(x)=\sqrt{-3\e^4(x^2-c_\e^2)(x^2+d_\e^2)},
\end{equation*}
where
\begin{equation}
\label{5.cd-asy}
\begin{aligned}
c_\e^2=&\frac{1-2\e^2+2\sqrt{1-\e^2+\e^4}}{3\e^2}=\frac{1}{\e^2}-1+O(\e^2)~\mathrm{as}~\e\to0,\\
d_\e^2=&\frac{-1+2\e^2+2\sqrt{1-\e^2+\e^4}}{3\e^2}=\frac{1}{3\e^2}+\frac{1}{3}+O(\e^2)~\mathrm{as}~\e\to0.
\end{aligned}
\end{equation}
Then we get that $D(x)$ admits two roots
\begin{equation*}
c_\e= \frac{1}{\e}-\frac{\e}{2}+O(\e^2)\quad \mbox{and}\quad
-c_\e=-\frac{1}{\e}+\frac{\e}{2}+O(\e^2)\quad \mbox{as}\quad \e\to0.	
\end{equation*}
Around the root points $c_\e$ and $-c_\e$, we have
\begin{equation}
\label{5.dasy}
D(x)=\begin{cases}
\sqrt{6\e^4c_\e(c_\e^2+d_\e^2)}\left((c_\e-x)^{\frac12}+O((x-c_\e)^{\frac32})\right)\quad &\mathrm{as}\quad x\to c_\e,\\
\sqrt{6\e^4c_\e(c_\e^2+d_\e^2)}\left((x+c_\e)^{\frac12}+O((x+c_\e)^{\frac32})\right)\quad &\mathrm{as}\quad x\to -c_\e.	
\end{cases}	
\end{equation}
It is known that $D(\xi)$ is positive when $|\xi|<c_\e$ and negative for $|\xi|>c_\e.$ We write the integral \eqref{a.3} as
\begin{equation}
\label{5-10}
K_{m,0}(x,y)=\int_{\R}\int_{\R}\frac{\xi_1^m}{D(\xi_1)}\left(\frac{e^{ix\xi_1+iy\xi_2}}{\xi_2^2+a(\xi_1)}-\frac{e^{ix\xi_1+iy\xi_2}}{\xi_2^2+b(\xi_1)}\right)d\xi_2d\xi_1.	
\end{equation}
We shall first study $K_{m,0}(x,y)$ for the case $y\neq0$. Without loss of generality, we may assume that $y>0$. From the above discussion on $D(x)$ we see that both $a(\xi_1)$ and $b(\xi_1)$ turns to be imaginary number if $|\xi_1|>c_\e$. In our calculations, for convenience we always assume that the real parts of $\sqrt{a(\xi_1)}$ and $\sqrt{b(\xi_1)}$ are positive. Then using the Residue Theorem we could write \eqref{5-10} as
\begin{equation}
\label{5.res}
K_{m,0}(x,y)=\pi\int_{\R}\frac{\xi^m}{D(\xi)}
\left(\frac{e^{ix\xi-\sqrt{a(\xi)}y}}{\sqrt{a(\xi)}}-\frac{e^{ix\xi-\sqrt{b(\xi)}y}}{\sqrt{b(\xi)}}\right)d\xi.
\end{equation}
To estimate the above integral \eqref{5.res}, we write
\begin{equation}
\label{5.dec}
\begin{aligned}	
&\frac{e^{ix\xi-\sqrt{a(\xi)}y}}{\sqrt{a(\xi)}}	
-\frac{e^{ix\xi-\sqrt{b(\xi)}y}}{\sqrt{b(\xi)}}\\
&=\frac{e^{ix\xi-\sqrt{a(\xi)}y}}{\sqrt{a(\xi)}}
-\frac{e^{ix\xi-\sqrt{a(\xi)}y}}{\sqrt{b(\xi)}}	
+\frac{e^{ix\xi-\sqrt{a(\xi)}y}}{\sqrt{b(\xi)}}
-\frac{e^{ix\xi-\sqrt{b(\xi)}y}}{\sqrt{b(\xi)}}\\
&=e^{ix\xi-\sqrt{a(\xi)}y}\left(\frac{b(\xi)-a(\xi)}{\sqrt{a(\xi)b(\xi)}(\sqrt{a(\xi)}+\sqrt{b(\xi)})}+\frac{1-e^{(\sqrt{a(\xi)}-\sqrt{b(\xi)})y}}{\sqrt{b(\xi)}}\right).
\end{aligned}
\end{equation}
Substituting \eqref{5.dec} into \eqref{5.res} and we decompose $K_{m,0}(x,y)$ into two parts
\begin{equation*}
\begin{aligned}
K_{m,0}(x,y)&=\int_{\R}\frac{\pi\xi^m}{D(\xi)}	
\frac{e^{ix\xi-\sqrt{a(\xi)}y}(b(\xi)-a(\xi))}{\sqrt{a(\xi)b(\xi)}(\sqrt{a(\xi)}+\sqrt{b(\xi)})}d\xi\\	
&\quad+\int_{\R}\frac{\pi\xi^m}{D(\xi)}\frac{e^{ix\xi-\sqrt{a(\xi)}y}}{\sqrt{b(\xi)}}\left(1-e^{(\sqrt{a(\xi)}-\sqrt{b(\xi)})y}\right)d\xi\\
&=:\pi(I_1+I_2).	
\end{aligned}
\end{equation*}
Concerning the integrand in $I_1$, we could write it as
\begin{equation*}
\begin{aligned}
&\frac{\xi^m}{D(\xi)}	
\frac{e^{ix\xi-\sqrt{a(\xi)}y}(b(\xi)-a(\xi))}{\sqrt{a(\xi)b(\xi)}(\sqrt{a(\xi)}+\sqrt{b(\xi)})}
=\frac{\xi^m e^{ix\xi-\sqrt{a(\xi)}y}}{\e^4\sqrt{b(\xi)}(\sqrt{a(\xi)b(\xi)}+a(\xi))}\\
&=e^{ix\xi-\sqrt{a(\xi)}y}
\frac{\xi^m\sqrt{1+\e^2\xi^2-D(\xi)}}{\sqrt{2}\e^2\xi^2(1+\xi^2)+\frac{\sqrt{2}}{2}
|\xi|\sqrt{1+\xi^2}
(1+\e^2\xi^2-D(\xi))}.	
\end{aligned}	
\end{equation*}
We set
\begin{equation}
\label{5.m1}
M_m(\xi):=\frac{\xi^m\sqrt{1+\e^2\xi^2-D(\xi)}}{\sqrt{2}\e^2\xi^2(1+\xi^2)+\frac{\sqrt{2}}{2}
|\xi|\sqrt{1+\xi^2}
(1+\e^2\xi^2-D(\xi))}.
\end{equation}
It is not difficult to check that
\begin{equation}
\label{5.mm-asy}
M_m(\xi)=sgn(\xi)\xi^{m-1}+O(\xi^{m})\quad \mathrm{as}\quad|\xi|\to0.	
\end{equation}
Therefore we can see that $M_m(\xi)$ are all bounded at $0$ and $M_2(\xi),~M_3(\xi)=0$ at $\xi=0$.

Next we introduce a cut-off function $\chi_\e(\xi)$, defined by
\begin{equation}
\label{5.def-cut}
\chi_\e(\xi)=\begin{cases}
1,\quad &\mbox{if}\quad |\xi|\in\left[\frac34c_\e,\frac32c_\e\right],\\
\\
0,\quad &\mbox{if}\quad |\xi|\in[0,+\infty)\setminus\left[\frac12c_\e,2c_\e\right].	
\end{cases}	
\end{equation}
We decompose $I_1$ as
\begin{equation}
\label{5.dec-i1}
\begin{aligned}
I_1=~&\int_{\R}(1-\chi_\e(\xi))e^{ix\xi-\sqrt{a(\xi)}y}
M_m(\xi)d\xi+	\int_{\R}\chi_\e(\xi)e^{ix\xi-\sqrt{a(\xi)}y}
M_m(\xi)d\xi\\
=~&I_{11}+I_{12}.
\end{aligned}	
\end{equation}
From \eqref{5.mm-asy} we can see that the integrand function of $I_{11}$ is discontinuous at $\xi=0$ for $m=1$, continuous for $m=2$ and differentiable for $m=3.$ Using integration by parts, we have
\begin{equation}
\label{5.i11}
\begin{aligned}
I_{11}=~&\lim_{\xi\to0^-}\frac{e^{ix\xi-\sqrt{a(\xi)}y}}{ix-\frac{a'(\xi)}{2\sqrt{a(\xi)}}y}M_m(\xi)-\lim_{\xi\to0^+}\frac{e^{ix\xi-\sqrt{a(\xi)}y}}{ix-\frac{a'(\xi)}{2\sqrt{a(\xi)}}y}M_m(\xi)\\
&-\int_{\R}e^{ix\xi-\sqrt{a(\xi)}y}\partial_\xi\left(
	\frac{(1-\chi_\e(\xi))M_m(\xi)}{ix-\frac{a'(\xi)}{2\sqrt{a(\xi)}}y}\right)d\xi\\
	=&~I_{111}+I_{112}+I_{113}.
\end{aligned}	
\end{equation}
For the right hand side of \eqref{5.i11}, we notice that the boundary terms vanish for $m=2,3$ and equals to $\frac{2ix}{x^2+y^2}$ for $m=1$. Then we have
\begin{equation}
\label{5.i11-bou}
\begin{aligned}
|I_{111}|+|I_{112}|\begin{cases}
\leq\frac{C}{r},\quad &\mbox{for}\quad m=1,\\
\\
=0, &\mbox{for}\quad m=2,3.
\end{cases}
\end{aligned}
\end{equation}
Concerning the integrand term of $I_{113}$, we have
\begin{equation*}
\begin{aligned}
\partial_\xi\left(\frac{(1-\chi_\e(\xi))M_m(\xi)}{ix-\frac{a'(\xi)}{2\sqrt{a(\xi)}}y}\right)
=~&\frac{\frac{2a''(\xi)a(\xi)-(a'(\xi))^2}{4(a(\xi))^{\frac32}}}{\left(ix-\frac{a'(\xi)}{2\sqrt{a(\xi)}}y\right)^2}y(1-\chi_\e(\xi))M_m(\xi)
-\frac{\chi'_\e(\xi)M_m(\xi)}{ix-\frac{a'(\xi)}{2\sqrt{a(\xi)}}y}\\
&+\frac{1-\chi_\e(\xi)}{ix-\frac{a'(\xi)}{2\sqrt{a(\xi)}}y}M_m'(\xi)\\=~&I_{1131}+I_{1132}+I_{1133}.	
\end{aligned}
\end{equation*}
By direct computation we could derive the following estimate
\begin{equation}
\label{5.est-i11}
\begin{aligned}
|I_{1131}|+|I_{1132}|+|I_{1133}|
\leq C\dfrac{|\xi|^{m-1}\min\left\{\e|\xi|,\e^2\xi^2\right\}}{\e^2(1+\xi^4)r}.
\end{aligned}
\end{equation}
By \eqref{5.est-i11} we get
\begin{equation}
\label{5.esti113}
\begin{aligned}
|I_{113}|\leq~&\frac{C}{r}\int_{-2c_\e}^{2c_\e}
\frac{e^{-\sqrt{a(\xi)}y}}{(1+|\xi|)^{3-m}}d\xi
+\frac{C}{r}\int_{\R\setminus[-2c_\e,2c_\e]}
\frac{1}{\e|\xi|^{4-m}}
e^{-\frac{C|\xi|}{\e}y}d\xi\\
\leq~&\begin{cases}
\frac{C}{r},\quad &\mbox{if}\quad m=1,\\
C\left(\min\left\{\log\frac{1+\sqrt{y}}{\sqrt{y}},\log\frac{1}{\e}\right\}+e^{-\frac{C}{\e^2}y}\right)\frac{1}{r}, \quad&\mbox{if}\quad m=2,\\
C\left(\min\left\{\frac{1}{\sqrt{y}},\frac{1}{\e}\right\}+\frac{1}{\e}\max\left\{1,\log\frac{\e^2}{y}\right\}e^{-\frac{C}{\e^2}y}\right)\frac{1}{r},\quad &\mbox{if}\quad m=3,
\end{cases}
\end{aligned}	
\end{equation}
where we used that
\begin{equation*}
\int_0^{2c_\e}\frac{1}{1+\xi}e^{-C\xi^2y}d\xi
\leq C\int_0^{2c_\e\sqrt{y}}\frac{1}{\xi+\sqrt{y}}e^{-C\xi^2}d\xi\leq C\min\left\{\log\frac{1+\sqrt{y}}{\sqrt{y}},\log\frac{1}{\e}\right\},	
\end{equation*}
\begin{equation*}
\int_0^{2c_\e}e^{-C\xi^2y}d\xi
\leq \frac{C}{\sqrt{y}}\int_0^{Cc_\e\sqrt{y}}e^{-\xi^2}d\xi\leq C\min\left\{\frac{1}{\sqrt{y}},\frac{1}{\e}\right\},	
\end{equation*}
and
\begin{align*}
\int_{2c_\e}^\infty\frac{1}{\xi}e^{-C\frac{\xi}{\e}y}
d\xi
=~&Ce^{-\frac{C}{\e^2}y}\int_{\frac{2Cc_\e y}{\e}}^\infty\frac{1}{\xi}e^{-\xi}d\xi\\
\leq~&Ce^{-\frac{C}{\e^2}y}\left(\int_{\frac{2Cc_\e y}{\e}}^{\max\{1,\frac{2Cc_\e y}{\e}\}}\frac{1}{\xi}d\xi+\int_{\max\{1,\frac{2Cc_\e y}{\e}\}}^\infty e^{-\xi}d\xi\right)\\
\leq~& C\max\left\{1,\log\frac{\e^2}{y}\right\}e^{-\frac{C}{\e^2}y}.
\end{align*}

The estimations \eqref{5.esti113} and \eqref{5.i11-bou} give the asymptotic behavior for $I_{11}$ when $r$ is small, i.e.,
\begin{equation}
\label{5.i11-asy}
\begin{aligned}
|I_{11}|\leq\begin{cases}
\frac{C}{r},\quad&\mbox{if}\quad m=1,\\
C\left(\min\left\{\log\frac{1+\sqrt{y}}{\sqrt{y}},\log\frac{1}{\e}\right\}+e^{-\frac{C}{\e^2}y}\right)\frac{1}{r}, \quad&\mbox{if}\quad m=2,\\
C\left(\min\left\{\frac{1}{\sqrt{y}},\frac{1}{\e}\right\}+\frac{1}{\e}\left\{1,\log\frac{\e^2}{y}\right\}e^{-\frac{C}{\e^2}y}\right)\frac{1}{r},\quad &\mbox{if}\quad m=3.	
\end{cases}
\end{aligned}
\end{equation}
While for $r\to\infty$, we could use integration by parts once more after \eqref{5.i11} for $m=2$ and two more times for $m=3$, due to \eqref{5.mm-asy}. Then at least we can get that
\begin{equation}
\label{5.i11-asy-1}
|I_{11}|\leq \frac{C}{r^\frac32},\quad \mbox{as}\quad r\to\infty\quad \mbox{for}~m\geq2.
\end{equation}

Concerning the second term $I_{12}$ on the right handside of \eqref{5.dec-i1}, when $|x|\leq Cy$ is small, we have
\begin{equation*}
I_{12}=-\int_{\R}e^{ix\xi-\sqrt{a(\xi)}y}\partial_\xi\left(\frac{1}{ix-\frac{a'(\xi)}{2\sqrt{a(\xi)}}y}\chi_\e(\xi)M_m(\xi)\right)	d\xi.
\end{equation*}
As \eqref{5.est-i11} we get that
\begin{equation}
\label{5.est-i12}
\left|\partial_\xi\left(\frac{1}{ix-\frac{a'(\xi)}{2\sqrt{a(\xi)}}y}\chi_\e(\xi)M_m(\xi)\right)	\right|\leq \frac{C}{r}\e^{4-m}\left(1+\frac{1}{\e^{\frac12}\sqrt{|\xi-c_\e|}}\right)e^{-\frac{C}{\e^2}y}.	
\end{equation}
Therefore,
\begin{equation}
\label{5.i12-1}
|I_{12}|\leq C\frac{\e^{3-m}}{r}e^{-\frac{C}{\e^2}y}\leq C\frac{\e^{4-m}}{r^{\frac32}}.
\end{equation}
While if $|x|\geq Cy$, then we have
\begin{equation*}
\int_{\R}\chi_\e(\xi)e^{ix\xi-\sqrt{a(\xi)}y}M_m(\xi)d\xi=-\frac{1}{ix}\int_{\R}e^{ix\xi}\partial_\xi\left(\chi_\e(\xi)e^{-\sqrt{a(\xi)}y}M_m(\xi)\right)d\xi.
\end{equation*}
By direct computation, we have
\begin{align*}
\partial_\xi
\left(\chi_\e(\xi)e^{-\sqrt{a(\xi)}y}M_m(\xi)\right)
=~&\chi_\e'(\xi)e^{-\sqrt{a(\xi)}y}M_m(\xi)+\chi_\e(\xi)e^{-\sqrt{a(\xi)}y}M_m'(\xi)\\
&-\chi_\e(\xi)\frac{a'(\xi)}{2\sqrt{a(\xi)}}ye^{-\sqrt{a(\xi)}y}M_m(\xi)\\
=~&I_{121}+I_{122}+I_{123}.
\end{align*}
From the expression of $M_m(\xi)$, we could obtain the following estimation
\begin{equation*}
|I_{121}|+|I_{122}|+|I_{123}|\leq \frac{C\e^{3-m}}{r}\left(1+\frac{1}{\e^{\frac12}\sqrt{|\xi-c_\e|}}\right)e^{-\frac{C}{\e^2}y}.
\end{equation*}
Then
\begin{equation}
\label{5.i12-est}
\begin{aligned}
|I_{12}|&\leq C\left(\int_{-2c_\e}^{-\frac12c_\e}
+\int_{\frac12c_\e}^{2c_\e}\right)\frac{\e^{3-m}}{r}\left(1+\frac{1}{\e^{\frac12}\sqrt{|\xi-c_\e|}}\right)e^{-\frac{C}{\e^2}y}d\xi\\
&\leq \frac{C}{r}\e^{2-m}e^{-\frac{C}{\e^2}y}.
\end{aligned}
\end{equation}
For $r$ large, without loss of generality we only give the explanation for the situation $|x|\geq Cy,$ while the case $|x|\leq Cy$ can be handled similarly. The crucial point is to obtain the most singular part in $\partial_\xi(\chi_\e(\xi)e^{-\sqrt{a(\xi)}y}M_m(\xi))$. Using \eqref{5.dasy}, we could write
\begin{equation}
\label{5.i12-asy}
I_{121}+I_{122}+I_{123}=
\begin{cases}
l_{\e}\frac{1}{\sqrt{c_\e-\xi}}(1+c_1\sqrt{c_\e-\xi}+O(c_\e-\xi)),~\mbox{around}~c_\e,\\	
l_{\e}\frac{1}{\sqrt{c_\e+\xi}}(1+c_1\sqrt{c_\e+\xi}+O(c_\e+\xi)),~\mbox{around}~-c_\e,
\end{cases}
\end{equation}
where
\begin{equation*}
\begin{aligned}
l_\e=&+\frac{c_\e^{m+1}(1-3\e^2c_\e^2-2\e^2)y}
{2\sqrt{3\e^4c_\e(c_\e^2+d_\e^2)}(\sqrt{2}\e^2c_\e^2(1+c_\e^2)+\frac{\sqrt{2}}{2}(1+\e^2c_\e^2)c_\e\sqrt{1+c_\e^2})}e^{-\frac{\sqrt{1+\e^2c_\e^2}}{\sqrt{2}\e^2}y}\\
&-\frac{c_\e^{m+1}(1-2\e^2-3\e^2c_\e^2)}
{\sqrt{6c_\e(1+\e^2c_\e^2)(c_\e^2+d_\e^2)}
(\sqrt{2}\e^2c_\e^2(1+c_\e^2)+\frac{\sqrt{2}}{2}c_\e(1+\e^2c_\e^2)\sqrt{1+c_\e^2})}e^{-\frac{\sqrt{1+\e^2c_\e^2}}{\sqrt{2}\e^2}y}\\
&+\frac{c_\e^{m+2}\sqrt{(1+c_\e^2)(1+\e^2c_\e^2)}(1-2\e^2-3\e^2c_\e^2)}{\sqrt{3c_\e(c_\e^2+d_\e^2)}
(\sqrt{2}\e^2c_\e^2(1+c_\e^2)+\frac{\sqrt{2}}{2}c_\e(1+\e^2c_\e^2)\sqrt{1+c_\e^2})^2}e^{-\frac{\sqrt{1+\e^2c_\e^2}}{\sqrt{2}\e^2}y}.
\end{aligned}
\end{equation*}
Around the zero point $c_\e$ and $-c_\e$, except the most singular terms $\frac{l_\e}{\sqrt{c_\e-\xi}}$ and $\frac{l_\e}{\sqrt{c_\e+\xi}}$, the left terms are continuous at $c_\e$ and one can get better decay after using integration by parts once more. For the most singular term we have
\begin{equation}
\label{5.i12-sin}
\int_{\R}e^{ix\xi}\left(l_\e\frac{1}{\sqrt{c_\e-\xi}}
+l_\e\frac{1}{\sqrt{c_\e+\xi}}\right)d\xi
=\frac{l_\e}{\sqrt{x}}\left(e^{ixc_\e}\int_{\R}\frac{e^{it}}{\sqrt{-t}}dt+e^{-ixc_\e}\int_{\R}\frac{e^{it}}{\sqrt{t}}dt\right).
\end{equation}
Using the asymptotic expansion \eqref{5.cd-asy} we can check that
\begin{equation}
\label{5.asy-le}
l_\e=O\left(\e^{\frac12-m}ye^{-\frac{C}{\e^2}y}+\e^{\frac52-m}e^{-\frac{C}{\e^2}y}\right).
\end{equation}
Using \eqref{5.i12-asy}, \eqref{5.i12-sin} and \eqref{5.asy-le}, we derive that
\begin{equation}
\label{5.i12}
|I_{12}|\leq
\begin{cases}
\frac{C}{r},\quad &\mbox{if}\quad m=1,\\
\frac{C}{r^{\frac32}},\quad &\mbox{if}\quad m=2,\\
\frac{C}{\e^{\frac12}r^{\frac32}},\quad  &\mbox{if}\quad m=3.	
\end{cases}
\end{equation}
Using \eqref{5.i11-asy}, \eqref{5.i11-asy-1}, \eqref{5.i12-1}, \eqref{5.i12-est} and \eqref{5.i12} we get that for $r$ small that
\begin{equation}
\label{5.i1-small}
|I_1|\leq
\begin{cases}
\frac{C}{r},\quad&\mbox{if}\quad m=1,\\
C\left(\min\left\{\log\frac{1+\sqrt{y}}{\sqrt{y}},\log\frac{1}{\e}\right\}+e^{-\frac{C}{\e^2}y}\right)\frac{1}{r}, \quad&\mbox{if}\quad m=2,\\
C\left(\min\left\{\frac{1}{\sqrt{y}},\frac{1}{\e}\right\}+\frac{1}{\e}\max\left\{1,\log\frac{\e^2}{y}\right\}e^{-\frac{C}{\e^2}y}\right)\frac{1}{r},\quad &\mbox{if}\quad m=3.	
\end{cases}
\end{equation}
While for $r$ large that
\begin{equation}
\label{5.i1-big}
|I_1|\leq
\begin{cases}
\frac{C}{r},\quad&\mbox{if}\quad m=1,\\
\frac{C}{r^{\frac32}}, \quad&\mbox{if}\quad m=2,\\
\frac{C}{\e^{\frac12}r^{\frac32}},\quad &\mbox{if}\quad m=3.	
\end{cases}
\end{equation}

Next we consider $I_2$. We set
$$z:=\left(\sqrt{a(\xi)}-\sqrt{b(\xi)}\right)y=-\frac{D(\xi)}{\e^4(\sqrt{a(\xi)}+\sqrt{b(\xi)})}y.$$
As $I_1$ we write
\begin{equation*}
I_2=\int_{\R}\frac{y\xi^me^{ix\xi-\sqrt{a(\xi)}y}}{\e^4(\sqrt{a(\xi)b(\xi)}+b(\xi))}\frac{e^z-1}{z}\left((1-\chi_\e(\xi))+\chi_\e(\xi)\right)d\xi
=\pi(I_{21}+I_{22}).
\end{equation*}
Concerning $I_{21}$, using integration by parts we get
\begin{equation}
\label{5.i21}
I_{21}=-\int_{\R}e^{ix\xi-\sqrt{a(\xi)}y}\partial_{\xi}\left(\frac{1}{ix-\frac{a'(\xi)}{2\sqrt{a(\xi)}}y}
\frac{y\xi^m(1-\chi_\e(\xi))}{\e^4(b(\xi)+\sqrt{a(\xi)b(\xi)})}\eta(z)\right)d\xi,
\end{equation}
where $\eta(z)=\frac{e^z-1}{z}$. As \eqref{5.est-i11} and \eqref{5.est-i12} we could obtain the following estimation,
\begin{equation*}
\begin{aligned}
&\left|\partial_{\xi}
\left(\frac{1}{ix\xi-\frac{a'(\xi)}{2\sqrt{a(\xi)}}y}
\frac{y\xi^m(1-\chi_\e(\xi))}{\e^4(b(\xi)+\sqrt{a(\xi)b(\xi)})}\eta(z)\right)\right|\\
&\leq\begin{cases}
C\dfrac{\e(1+\xi)^{m}+(1+\xi)^{m-1}}{r}y,
\quad&\mbox{if}\quad |\xi|\leq 2c_\e,\\
C\dfrac{1}{\e^2\xi^{3-m}r}y,\quad&\mbox{if}\quad |\xi|\geq 2c_\e.
\end{cases}
\end{aligned}
\end{equation*}
Then we have
\begin{equation}
\label{5.i21-est-1}
\begin{aligned}
|I_{21}|\leq~&\int_{-2c_\e}^{2c_\e}\frac{C}{r}\left(\e(1+\xi)^{m-2}+(1+\xi)^{m-3}\right)e^{-C\xi^2y}d\xi\\
&+\int_{\R\setminus[-2c_\e,2c_\e]}\frac{C}{\e\xi^{4-m}r}e^{-C\frac{|\xi|}{\e}y}d\xi\\
\leq~&\begin{cases}
\frac{C}{r},\quad&\mbox{if}\quad m=1,\\
C\left(\min\left\{\log\frac{1+\sqrt{y}}{\sqrt{y}},\log\frac{1}{\e}\right\}+e^{-\frac{C}{\e^2}y}\right)\frac{1}{r},\quad&\mbox{if}\quad m=2,\\
C\left(\min\left\{\frac{1}{\sqrt{y}},\frac{1}{\e}\right\}+\frac{1}{\e}\max\left\{1,\log\frac{\e^2}{y}\right\}e^{-\frac{C}{\e^2}y}\right)\frac{1}{r},\quad&\mbox{if}\quad m=3.
\end{cases}
\end{aligned}
\end{equation}
The above estimate \eqref{5.i21-est-1} gives the control on the asymptotic behavior for $I_{21}$ when $r$ is small. While for $r\to\infty$, we could use integration by parts once more after \eqref{5.i21} for $m=2$, and two more times for $m=3$, then there is no boundary terms due to that the integrand is $C^1$ continuous at $0$ for $m=2$ and $C^2$ continuous at $0$ for $m=3.$ After differentiation we could obtain that the integrand function has better decay and one can at least show that
\begin{equation}
\label{5.i21-est-2}
|I_{21}|\leq\frac{C}{r^\frac{3}{2}},\quad \mbox{as}\quad r\to\infty\quad \mbox{for}~m\geq2.	
\end{equation}

Concerning $I_{22},$ if $|y|\geq C|x|$ we have
\begin{equation*}
I_{22}=-\int_{\R}e^{ix\xi-\sqrt{a(\xi)}y}\partial_\xi
\left(\frac{1}{ix-\frac{a'(\xi)}{2\sqrt{a(\xi)}}y}
\frac{y\xi^m\eta(z)\chi_\e(\xi)}{\e^4(\sqrt{a(\xi)b(\xi)}+b(\xi))}
\right)d\xi.	
\end{equation*}
It is not difficult to check that
\begin{equation*}
\begin{aligned}
&\left|\partial_\xi
\left(\frac{1}{ix-\frac{a'(\xi)}{2\sqrt{a(\xi)}}y}
\frac{y\xi^m\eta(z)\chi_\e(\xi)}{\e^4(\sqrt{a(\xi)b(\xi)}+b(\xi))}
\right)\right|
&\leq \frac{C}{r}\left(\frac{\e^{-\frac12-m}y^2}{\sqrt{|c_\e-\xi|}}+\frac{\e^{\frac32-m}y}{\sqrt{|c_\e-\xi|}}\right)e^{-\frac{C}{\e^2}y}.
\end{aligned}	
\end{equation*}
As a consequence, we derive that
\begin{equation}
\label{5.i22-1}
\begin{aligned}
|I_{22}|\leq~&\frac{C}{r}\left(\int_{-2c_\e}^{-\frac12c_\e}+\int_{\frac12c_\e}^{2c_\e}\right)\frac{y\e^{\frac32-m}+y^2\e^{-\frac12-m}}{\sqrt{|c_\e-\xi|}}e^{-\frac{C}{\e^2}y}d\xi\\
=~&\frac{C}{r}(\e^{1-m}y+\e^{-1-m}y^2)e^{-\frac{C}{\e^2}y}\leq \frac{C}{r}\e^{3-m}e^{-\frac{C}{\e^2}y}.	
\end{aligned}
\end{equation}
While for $|x|\geq C|y|$ we use integration by parts to get that
\begin{equation}
\label{i.22-1}
I_{22}=-\int_{\R}\frac{1}{ix}e^{ix\xi}\partial_\xi
\left(\frac{y\xi^me^{-\sqrt{a(\xi)}y}}{\e^4(\sqrt{a(\xi)b(\xi)}+b(\xi))}\eta(z)\chi_\e(\xi)\right)d\xi.
\end{equation}
The support of the integrand is $[-2c_\e,-\frac12c_\e]\cup[\frac12c_\e,2c_\e]$. In the support of this function one can check
\begin{equation*}
\begin{aligned}
&\left|\partial_\xi\left(\frac{y\xi^me^{-\sqrt{a(\xi)}y}}{\e^4(\sqrt{a(\xi)b(\xi)}+b(\xi))}\eta(z)\chi_\e(\xi)\right)\right|\\
&\leq C\left(\e^{1-m}y+\e^{-1-m}y^2
+\frac{y}{\e^{-\frac12+m}\sqrt{|\xi-c_\e|}}+\frac{y^2}{\e^{\frac32+m}\sqrt{|\xi-c_\e|}}
\right)e^{-\frac{C}{\e^2}y}.
\end{aligned}
\end{equation*}
Then
\begin{equation}
\label{5.i22-3}
\begin{aligned}
|I_{22}|\leq~& \frac{C}{r}\left(\int_{-2c_\e}^{-\frac12c_\e}+\int_{\frac12c_\e}^{2c_\e}\right)\left(\frac{y}{\e^{-\frac12+m}\sqrt{|\xi-c_\e|}}+\frac{y^2}{\e^{\frac32+m}\sqrt{|\xi-c_\e|}}\right)e^{-\frac{C}{\e^2}y}
d\xi\\
\leq~&\frac{C}{r}\min\{
\e^{-2-m}y^2,\e^{-m}y\}e^{-\frac{C}{\e^2}y}
\leq\begin{cases}
\frac{C}{r},\quad &\mbox{if}\quad m=1,\\
\frac{C}{r},\quad   &\mbox{if}\quad m=2,\\ 	
\frac{C}{\e r}e^{-\frac{C}{\e^2}y}  ,\quad &\mbox{if}\quad m=3.
\end{cases}
\end{aligned}
\end{equation}
While for $r$ large, in order to get a better decay on the integration, we notice that the most singular term behave as $\frac{1}{\sqrt{c_\e-\xi}}$ and $\frac{1}{\sqrt{c_\e+\xi}}$. Following almost the same argument as we did from \eqref{5.i12-est} to \eqref{5.i12} we conclude that
\begin{equation}
\label{5.i22-4}
|I_{22}|\leq
\begin{cases}
\frac{C}{r},\quad &\mbox{if}\quad m=1,\\
\frac{C}{r^{\frac32}},\quad &\mbox{if}\quad m=2,\\
\frac{C}{\e^{\frac12}r^{\frac32}}e^{-\frac{C}{\e^2}y},\quad  &\mbox{if}\quad m=3.	
\end{cases}	
\end{equation}
Combined with \eqref{5.i21-est-1}, \eqref{5.i21-est-2}, \eqref{5.i22-3} we get for $r$ small that
\begin{equation}
\label{5.i2-small}
|I_2|\leq
\begin{cases}
\frac{C}{r},\quad&\mbox{if}\quad m=1,\\
C\left(\min\left\{\log\frac{1+\sqrt{y}}{\sqrt{y}},\log\frac{1}{\e}\right\}+e^{-\frac{C}{\e^2}y}\right)\frac{1}{r}, \quad&\mbox{if}\quad m=2,\\
C\left(\min\left\{\frac{1}{\sqrt{y}},\frac{1}{\e}\right\}+\frac{1}{\e}\max\left\{1,\log\frac{\e^2}{y}\right\}e^{-\frac{C}{\e^2}y}\right)\frac{1}{r},\quad &\mbox{if}\quad m=3.	
\end{cases}
\end{equation}
While for $r$ large it holds that
\begin{equation}
\label{5.i2-big}
|I_2|\leq
\begin{cases}
\frac{C}{r},\quad&\mbox{if}\quad m=1,\\
\frac{C}{r^{\frac32}}, \quad&\mbox{if}\quad m=2,\\
\frac{C}{\e^{\frac12}r^{\frac32}}e^{-\frac{C}{\e^2}y},\quad &\mbox{if}\quad m=3.	
\end{cases}
\end{equation}

It remains to consider the case where $y=0$. In this case, we derive that
\begin{equation*}
K_{m,0}(x,0)=\pi\int_{\R} e^{ix\xi}M_m(\xi)d\xi.
\end{equation*}
Since the integrand decays at infinity for $m=1$ and $m=2$ and the estimations for $K_{m,0}(x,0)$ ($m=1$ and $m=2$) are similar to the case $y\neq0$. While for $m=3$, one can easily see that as $|\xi|\to\infty$ we have
\begin{equation}
\label{5.m3asy}
M_3(\xi)=
\begin{cases}
\dfrac{\sqrt{1-\sqrt{3}i}}{\sqrt{2}+\frac{\sqrt{2}}{2}(1-\sqrt{3}i)}\frac{1}{\e}\left(1+O(\e^{-2}\xi^{-2})\right),\quad &\mbox{as}\quad \xi\to\infty,\\
\\
-\dfrac{\sqrt{1-\sqrt{3}i}}{\sqrt{2}+\frac{\sqrt{2}}{2}(1-\sqrt{3}i)}\frac{1}{\e}\left(1+O(\e^{-2}\xi^{-2})\right),\quad &\mbox{as}\quad \xi\to-\infty	.
\end{cases}
\end{equation}
Then we write
\begin{equation*}
\begin{aligned}
K_{3,0}=~&\pi\int_{\R} e^{ix\xi}\left(M_3(\xi)-sgn(\xi)\frac{\sqrt{1-\sqrt{3}i}}{\sqrt{2}+\frac{\sqrt{2}}{2}(1-\sqrt{3}i)}\frac{1}{\e}\right)d\xi\\
&+\pi\int_{\R}sgn(\xi)\frac{\sqrt{1-\sqrt{3}i}}{\sqrt{2}+\frac{\sqrt{2}}{2}(1-\sqrt{3}i)}\frac{1}{\e}e^{ix\xi}d\xi\\
=~&\pi\int_{\R} e^{ix\xi}\widetilde{M}_3(\xi)d\xi+\frac{i}{\e x}\frac{2\sqrt{1-\sqrt{3}i}}{\sqrt{2}+\frac{\sqrt{2}}{2}(1-\sqrt{3}i)},
\end{aligned}
\end{equation*}
where we used that
\begin{equation*}
\pi\int_{\R}e^{ix\xi}sgn(\xi)d\xi=-\frac{2}{ix}.	
\end{equation*}
and
\begin{equation*}
\widetilde{M}_3(\xi)=M_3(\xi)-sgn(\xi)\frac{\sqrt{1-\sqrt{3}i}}{\sqrt{2}+\frac{\sqrt{2}}{2}(1-\sqrt{3}i)}\frac{1}{\e}.
\end{equation*}
With \eqref{5.m3asy}, we see that $\widetilde{M}_3'(\xi)$ decays at infinity like $O(\e^{-2}\xi^{-3})$. Then following the arguments of deriving $I_1$, we could get
\begin{equation*}
r|K_{3,0}(x,0)|\leq\frac{C}{\e r}.
\end{equation*}
Then we finish the whole proof.
\end{proof}

In the next lemma, we shall investigate the behavior of $K_{m,n}(x,y)$ with $(m,n)=(0,1),(0,2),(0,3),(1,1),(1,2).$

\begin{lemma}
\label{lea.2}
Let $K_{m,n}(x,y)$ be defined in \eqref{a.2}. Then for $r$ small we have
\begin{equation*}
r|K_{m,n}(x,y)|\leq
\begin{cases}
C\min\left\{\log\frac{1+\sqrt{|y|}}{\sqrt{|y|}},\log\frac{1}{\e}\right\},\quad &\mathrm{for}\quad (m,n)=(0,1),\\
C\e^{-2}\left(\log\frac{1}{\e}+1\right)e^{-\frac{C}{\e^2}|y|}+\frac{C}{\sqrt{|y|}}\e^{-1},\quad &\mathrm{for}\quad (m,n)=(0,2),\\
C\e^{-4}\left(\log\frac{1}{\e}+\max\left\{1,\log\frac{\e^2}{|y|}\right\}\right)e^{-\frac{C}{\e^2}|y|}+\frac{C}{\sqrt{|y|}}\e^{-3},\quad &\mathrm{for}\quad (m,n)=(0,3),\\
C\left(\min\left\{\frac{1}{\sqrt{|y|}},\frac{1}{\e}\right\}+\frac{1}{\e}e^{-\frac{C}{\e^2}|y|}\right),\quad &\mathrm{for}\quad (m,n)=(1,1),\\
C\e^{-2}\left(\min\left\{\frac{1}{\sqrt{|y|}},\frac{1}{\e}\right\}+\frac{1}{\e}\max\left\{1,\log\frac{\e^2}{|y|}\right\}e^{-\frac{C}{\e^2}|y|}\right), &\mathrm{for}\quad (m,n)=(1,2).
\end{cases}
\end{equation*}
While for $r$ large we have
\begin{equation*}
r|K_{m,n}(x,y)|\leq
\begin{cases}
C\log\frac{1}{\e},\quad &\mathrm{for}\quad (m,n)=(0,1),\\
C\e^{-\frac32}r^{-\frac12},\quad &\mathrm{for}\quad (m,n)=(0,2),\\
C\e^{-\frac72}r^{-\frac12},\quad &\mathrm{for}\quad (m,n)=(0,3),\\
C\e^{-\frac12}r^{-\frac12},\quad &\mathrm{for}\quad (m,n)=(1,1),\\
C\e^{-\frac52}r^{-\frac12},\quad &\mathrm{for}\quad (m,n)=(1,2).
\end{cases}
\end{equation*}
\end{lemma}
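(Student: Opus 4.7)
The plan is to adapt the strategy of Lemma \ref{lea.1}, but first to carry out the $\xi_2$-integration explicitly. Using the factorization
$$\xi_1^4+\xi_1^2+\xi_2^2+\e^2\xi_1^2\xi_2^2+\e^4\xi_2^4=\e^4(\xi_2^2+a(\xi_1))(\xi_2^2+b(\xi_1))$$
together with the partial fraction decomposition
$$\frac{\xi_2^n}{(\xi_2^2+a)(\xi_2^2+b)}=\frac{1}{b-a}\left(\frac{\xi_2^n}{\xi_2^2+a}-\frac{\xi_2^n}{\xi_2^2+b}\right),\qquad a\neq b,$$
one reduces, for each admissible $(m,n)$, the $\xi_2$-integral in closed form via residues: for $n=1,3$ directly (with the polynomial division $\xi_2^3=\xi_2(\xi_2^2+c)-c\xi_2$ in the latter case), and for $n=2$ after writing $\xi_2^2=(\xi_2^2+c)-c$ and discarding the $\delta(y)$ contribution since $y\neq 0$. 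In each case this yields, for $y\neq 0$, a one-dimensional $\xi_1$-integral of the schematic form
$$K_{m,n}(x,y)=\pi\int_\R \frac{\xi_1^m}{D(\xi_1)}\Bigl(\alpha_n(a(\xi_1))\,e^{ix\xi_1-\sqrt{a(\xi_1)}|y|}-\alpha_n(b(\xi_1))\,e^{ix\xi_1-\sqrt{b(\xi_1)}|y|}\Bigr)d\xi_1,$$
where $\alpha_n(t)$ is a power of $\sqrt{t}$ of exponent $n-1$ (up to sign and a possible $\mathrm{sgn}(y)$). This is the direct analogue of \eqref{5.res} and isolates the $\xi_1$-variable.

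From here I would follow almost verbatim the strategy of the proof of Lemma \ref{lea.1}. Introduce the cutoff $\chi_\e$ from \eqref{5.def-cut} and split the integral into a bulk piece (supported in $|\xi_1|\notin[\tfrac12 c_\e,2c_\e]$) and a piece supported near the branch points $\pm c_\e$ of $D$. On the bulk piece, integration by parts in $\xi_1$ (repeated once, twice or three times depending on the smoothness of the new symbol $\xi_1^m\alpha_n(a)/D$ at $\xi_1=0$ and its decay at infinity) produces a factor $1/r$ and an integrand controlled either by $e^{-C\xi_1^2|y|}$ for $|\xi_1|\le 2c_\e$ or by $e^{-C|\xi_1||y|/\e}$ for $|\xi_1|\ge 2c_\e$, giving the logarithmic and $\min\{1/\sqrt{|y|},1/\e\}$ bounds in the same way as \eqref{5.esti113}. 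On the piece supported near $\pm c_\e$: for $r$ small, a single integration by parts yields the $r^{-1}$ factor multiplied by $e^{-C|y|/\e^2}$ and by negative powers of $\e$ coming from $\alpha_n$; for $r$ large, one extracts the most singular term $1/\sqrt{c_\e\mp\xi_1}$ (arising from the differentiation of the exponential and of the algebraic factor at the branch point, using \eqref{5.dasy}), reducing to a model integral $\int_\R e^{ix\xi}/\sqrt{c_\e\pm\xi}\,d\xi=O(r^{-1/2})$ as in \eqref{5.i12-sin}.

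The new feature compared with Lemma \ref{lea.1} is the appearance of positive powers of $\sqrt{a}$ or $\sqrt{b}$ through $\alpha_n$. Since $a(\pm c_\e)=b(\pm c_\e)=(1+\e^2c_\e^2)/(2\e^4)$, one has $\sqrt{a(\pm c_\e)}\sim \e^{-1}$ and $a(\pm c_\e)\sim \e^{-2}$, so each extra factor $\xi_2^2$ that was removed from the integrand costs one power of $\e^{-2}$. This explains the $\e^{-2}$ prefactor in the statement for $(0,2)$ and $(1,2)$, the $\e^{-4}$ for $(0,3)$, and the $\e^{-1}$ for $(1,1)$; the additional factor $\xi_1$ in the cases $(1,1),(1,2)$ contributes a further $c_\e\sim \e^{-1}$. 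The exponents of $\e$ and $r$ in the large-$r$ bounds (e.g.\ $\e^{-7/2}r^{-1/2}$ for $(0,3)$) arise by combining these prefactors with the computation of the leading coefficient at $\pm c_\e$, exactly as in the computation of $l_\e$ in \eqref{5.asy-le}. The degenerate case $y=0$ is handled separately as at the end of the proof of Lemma \ref{lea.1}, by subtracting the $\mathrm{sgn}(\xi_1)$-asymptotic of the reduced symbol when it does not vanish at infinity and invoking $\pi\int_\R e^{ix\xi}\,\mathrm{sgn}(\xi)\,d\xi=-2/(ix)$.

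The principal technical obstacle is the bookkeeping at the branch points in the large-$r$ regime. On the one hand the difference $\alpha_n(a)-\alpha_n(b)$ vanishes like $\sqrt{c_\e\mp\xi_1}$ (since $a-b$ does, by \eqref{5.dasy}), which partially cancels the $1/D(\xi_1)$ singularity; on the other hand this cancellation must be correctly traded against the evaluation of $\alpha_n$ at $a\sim b\sim \e^{-2}$ in order to produce exactly the powers of $\e$ claimed in the statement, while still preserving the $r^{-1/2}$ decay rate. Tracking this interplay for $n=2,3$ is the delicate part of the argument; once it is carried out, the rest is a mechanical extension of the estimates already established in Lemma \ref{lea.1}.
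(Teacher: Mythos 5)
Your proposal follows essentially the same route as the paper: the paper likewise performs the $\xi_2$-integration by residues (e.g.\ for $(m,n)=(1,2)$ it arrives at $\pi\int_{\R}\frac{\xi e^{ix\xi}}{D(\xi)}\bigl(\sqrt{b(\xi)}e^{-\sqrt{b(\xi)}y}-\sqrt{a(\xi)}e^{-\sqrt{a(\xi)}y}\bigr)d\xi$, the exact instance of your $\alpha_n$-formula), then splits off the difference of prefactors against $1/D$ and the difference of exponentials via $\eta(z)=(e^z-1)/z$, and reuses the cutoff, integration-by-parts, branch-point and $y=0$ (sgn-subtraction) machinery of Lemma \ref{lea.1}, with the $\e$-powers tracked through $a(\pm c_\e)\sim\e^{-2}$ and $c_\e\sim\e^{-1}$ exactly as you describe. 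Your sketch is consistent with, and at a comparable level of detail to, the paper's own (sketched) argument.
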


\begin{proof}
The proof of this lemma goes almost the same as Lemma \ref{lea.1}. Here we only sketch the proof for the case $(m,n)=(1,2)$. First, we study the integration for $y>0$. Using Residue's Theorem we get
\begin{equation*}
\begin{aligned}
K_{1,2}(x,y)=~&\pi\int_{\R}\frac{\xi e^{ix\xi}}{D(\xi)}\left(\sqrt{b(\xi)}e^{-\sqrt{b(\xi)}y}-\sqrt{a(\xi)}e^{-\sqrt{a(\xi)}y}\right)d\xi\\
=~&\pi\int_{\R}\frac{\xi(\sqrt{b(\xi)}-\sqrt{a(\xi)})e^{ix\xi-\sqrt{b(\xi)}y}}{D(\xi)}d\xi\\
&+\pi\int_{\R}\frac{\xi\sqrt{a(\xi)}e^{ix\xi-\sqrt{a(\xi)}y}}{D(\xi)
}\left(e^{(\sqrt{a(\xi)}-\sqrt{b(\xi)})y}-1\right)d\xi\\
=~&\pi(I_3+I_4).
\end{aligned}
\end{equation*}
In the following, we shall estimate $I_3$ and $I_4$ respectively. Concerning $I_3$ first, we write
\begin{equation*}
\begin{aligned}
I_3=\int_{\R}\frac{\xi}{\e^4(\sqrt{b(\xi)}+\sqrt{b(\xi)})}e^{ix\xi-\sqrt{b(\xi)}y}\left((1-\chi_\e(x))+\chi_\e(x)\right)d\xi
=I_{31}+I_{32},
\end{aligned}
\end{equation*}
where $\chi_\e(x)$ is introduced in \eqref{5.def-cut}. Concerning $I_{31}$, using integration by parts we get
\begin{equation*}
I_{31}=-\int_{\R}e^{ix\xi-\sqrt{b(\xi)}y}\partial_\xi\left(\frac{1}{ix-\frac{b'(\xi)}{2\sqrt{b(\xi)}}y}\frac{(1-\chi_\e(\xi))\xi}{\e^4(\sqrt{a(\xi)}+\sqrt{b(\xi)})}\right)d\xi
\end{equation*}
By direct computation, we have
\begin{equation*}
\begin{aligned}
&\partial_\xi\left(\frac{1}{ix-\frac{b'(\xi)}{2\sqrt{b(\xi)}}y}\frac{(1-\chi_\e(\xi))\xi}{\e^4(\sqrt{a(\xi)}+\sqrt{b(\xi)})}\right)\\
&=\frac{\frac{2b''(\xi)b(\xi)-(b'(\xi))^2}{4(b(\xi))^{\frac32}}y}{\left(ix-\frac{b'(\xi)}{2\sqrt{b(\xi)}}y\right)^2}\frac{(1-\chi_\e(\xi))\xi}{\e^4(\sqrt{a(\xi)}+\sqrt{b(\xi)})}
+\frac{1}{ix-\frac{b'(\xi)}{2\sqrt{b(\xi)}}y}\frac{(1-\chi_\e(\xi))-\chi_\e'(\xi)\xi}{\e^4(\sqrt{a(\xi)}+\sqrt{b(\xi)})}\\
&\quad+\frac{1}{ix-\frac{b'(\xi)}{2\sqrt{b(\xi)}}y}\frac{(1-\chi_\e(\xi))\xi(\sqrt{a(\xi)}+\sqrt{b(\xi)})'}{\e^4(\sqrt{a(\xi)}+\sqrt{b(\xi)})^2}\\
&=I_{311}+I_{312}+I_{313}.
\end{aligned}
\end{equation*}
One can check that have
\begin{equation*}
|I_{311}|+|I_{312}|+|I_{313}|\leq
\begin{cases}
\frac{C}{\e^2r}  ~&\mathrm{for}~|\xi|\leq 2c_\e,\\
\\
\frac{C}{\e^3|\xi| r}  ~&\mathrm{for}~|\xi|\geq2c_\e.
\end{cases}
\end{equation*}
Then
\begin{equation}
\label{5.31-est1}
\begin{aligned}
|I_{31}|\leq~& \frac{C}{\e^2r}\int_{-2c_\e}^{2c_\e}e^{-\sqrt{b(\xi)}y}d\xi
+\frac{C}{\e^2r}\int_{\R\setminus[-2c_\e,2c_\e]}\frac{1}{\e|\xi| }e^{-\sqrt{b(\xi)}y}d\xi\\
\leq~&\frac{C}{\e^2r}\int_{-2c_\e}^{2c_\e}e^{-\frac{C}{\e^2}y}d\xi
+\frac{C}{\e^2r}\int_{\R\setminus[-2c_\e,2c_\e]}\frac{1}{\e|\xi| }e^{-C\frac{|\xi|}{\e}y}d\xi\\
\leq~&\frac{C}{\e^3r} \max\left\{1,\log\frac{\e^2}{y}\right\}e^{-\frac{C}{\e^2}y}.
\end{aligned}
\end{equation}
For $I_{31}$, the estimate \eqref{5.31-est1} gives the asymptotic behavior for $r$ small. While for $r$ large, we could apply the integration by parts to derive a better decay for $I_{31}$ as $r$ tends to $\infty$, and there is no boundary term arising due to the integrand function vanishes at $0$. At least one can show that $|I_{31}|\leq C\e^{-\frac52}r^{-\frac32}.$

For $I_{32}$, when $|x|\leq Cy$ we have
\begin{equation*}
I_{32}=-\int_{\R}e^{ix\xi-\sqrt{b(\xi)}}
\partial_\xi\left(\frac{1}{ix-\frac{b'(\xi)}{2\sqrt{b(\xi)}}y}\frac{\chi_\e(\xi)\xi}{\e^4(\sqrt{a(\xi)}+\sqrt{b(\xi)})}\right)d\xi.
\end{equation*}
For the term inside the bracket we could bound it by
\begin{equation*}
\left|\partial_\xi\left(\frac{1}{ix-\frac{b'(\xi)}{2\sqrt{b(\xi)}}y}\frac{\chi_\e(\xi)\xi}{\e^4(\sqrt{a(\xi)}+\sqrt{b(\xi)})}\right)\right|
\leq \frac{C}{r}\e^{-2}\left(1+\frac{1}{\e^{\frac12}\sqrt{|c_\e-\xi|}}\right)e^{-\frac{C}{\e^2}y}.	
\end{equation*}
Based on the above estimation we gain that
\begin{equation}
\label{5.31-est2}
|I_{32}|	\leq \frac{C}{\e^3r}e^{-\frac{C}{\e^2}y}.\end{equation}
While if $|x|\geq Cy$, then by integration by parts
\begin{equation}
\label{5.31-est3}
I_{32}=-\frac{1}{ix}\int_{\R}e^{ix\xi}\partial_\xi\left	
(e^{-\sqrt{b(\xi)}y}\frac{\chi_\e(\xi)\xi}{\e^4(\sqrt{a(\xi)}+\sqrt{b(\xi)})}\right)d\xi.
\end{equation}
By direct computation, we have
\begin{equation*}
\begin{aligned}
&\left|\partial_\xi\left	
(e^{-\sqrt{b(\xi)}y}\frac{\chi_\e(\xi)\xi}{\e^4(\sqrt{a(\xi)}+\sqrt{b(\xi)})}\right)\right|\\
&\leq \frac{C}{r}\left(\frac{1}{\e^2}+\frac{1}{\e^{\frac52}\sqrt{|c_\e-\xi|}}
+\frac{1}{\e^4}y+\frac{1}{\e^{\frac92}\sqrt{|c_\e-\xi|}}y\right)e^{-\frac{C}{\e^2}y}.
\end{aligned}	
\end{equation*}
As a consequence
\begin{equation}
\label{5.31-est5}
\begin{aligned}
|I_{32}|	\leq~ &\frac{C}{\e^2r}e^{-\frac{C}{\e^2}y}\left(\int_{-2c_\e}^{-\frac12c_\e}
+\int_{\frac12c_\e}^{2c_\e}\right)\left(
1+\frac{1}{\e^{\frac12}\sqrt{|c_\e-\xi|}}+\frac{1}{\e^2}y+\frac{1}{\e^{\frac52}\sqrt{|c_\e-\xi|}}y\right)d\xi\\
\leq ~& \frac{C}{\e^3r}e^{-\frac{C}{\e^2}y}.
\end{aligned}
\end{equation}
While for $r$ large, we could follow the computations performed from \eqref{5.i12-asy} to \eqref{5.i12} to derive that
\begin{equation}
\label{5.31-est6}
|I_{32}|\leq \frac{C}{\e^{\frac52}r^{\frac32}}e^{-\frac{C}{\e^2}y}.	
\end{equation}
Using \eqref{5.31-est1}-\eqref{5.31-est6} we get that
\begin{equation}
\label{5.i3}
|I_3|\leq\begin{cases}
\frac{C}{\e^3r}\max\left\{1,\log\frac{\e^2}{y}\right\}e^{-\frac{C}{\e^2}y},\quad &\mathrm{as}\quad r\to0,\\
\frac{C}{\e^{\frac52}r^{\frac32}}e^{-\frac{C}{\e^2}y},\quad &\mathrm{as}\quad r\to\infty.	
\end{cases}
\end{equation}

To study $I_4$, we write it as
\begin{equation*}
\label{5.i4-1}
\begin{aligned}
I_{4}=-\int_{\R}\frac{y\xi\sqrt{a(\xi)}e^{ix\xi-\sqrt{a(\xi)}y}}{\e^4(\sqrt{a(\xi)}+\sqrt{b(\xi)})}\eta(z)((1-\chi_\e(\xi))+\chi_\e(\xi))d\xi=-\pi(I_{41}+I_{42}).
\end{aligned}	
\end{equation*}
As we did for $I_2$ of last lemma and $I_3$ from above, we could see that
\begin{equation}
\label{5.i41}
|I_{41}|\leq\begin{cases}
C\left(\min\left\{\frac{1}{\sqrt{y}},\frac{1}{\e}\right\}+\frac{1}{\e}\max\left\{1,\log\frac{\e^2}{y}\right\}e^{-\frac{C}{\e^2}y}\right)\frac{1}{\e^2r},\quad&\mbox{as}\quad r\to0,\\
\frac{C}{\e^{\frac52}r^{\frac32}},&\mbox{as}\quad r\to\infty,	
\end{cases}	
\end{equation}
and
\begin{equation}
\label{5.i42}
|I_{42}|\leq\begin{cases}
C\left(\min\left\{\frac{1}{\sqrt{y}},\frac{1}{\e}\right\}+\frac{1}{\e}\max\left\{1,\log\frac{\e^2}{y}\right\}e^{-\frac{C}{\e^2}y}\right)\frac{1}{\e^2r},\quad&\mbox{as}\quad r\to0,\\
\frac{C}{\e^{\frac52}r^{\frac32}}e^{-\frac{C}{\e^2}y},&\mbox{as}\quad r\to\infty.	
\end{cases}
\end{equation}
Combining \eqref{5.i41} and \eqref{5.i42} we get
\begin{equation*}
|I_4|\leq\begin{cases}
C\left(\min\left\{\frac{1}{\sqrt{y}},\frac{1}{\e}\right\}+\frac{1}{\e}\max\left\{1,\log\frac{\e^2}{y}\right\}e^{-\frac{C}{\e^2}y}\right)\frac{1}{\e^2r},\quad &\mathrm{as}\quad r\to0,\\
\frac{C}{\e^{\frac52}r^{\frac32}},\quad &\mathrm{as}\quad r\to\infty.	
\end{cases}
\end{equation*}

It remains to consider the case where $y=0.$ In this case, the process is as lemma \ref{lea.1}, we have to analyze the following integral
\begin{equation*}
\int_{\R}\frac{\xi}{\e^4(\sqrt{a(\xi)}+\sqrt{b(\xi)})}e^{ix\xi}d\xi.
\end{equation*}
It is not difficult to check that as $|\xi|\to\infty$ we have
\begin{equation*}
\frac{\xi}{\e^4(\sqrt{a(\xi)}+\sqrt{b(\xi)})}	
=\begin{cases}
\frac{(1+O(\e^{-2}\xi^{-2}))}{\e^3\left(\sqrt{\frac{1-\sqrt{3}i}{2}}+\sqrt{\frac{1+\sqrt{3}i}{2}}\right)},~&\mbox{as}~\xi\to\infty,\\
\\
-\frac{(1+O(\e^{-2}\xi^{-2}))}{\e^3\left(\sqrt{\frac{1-\sqrt{3}i}{2}}+\sqrt{\frac{1+\sqrt{3}i}{2}}\right)},	~&\mbox{as}~\xi\to-\infty.
\end{cases}
\end{equation*}
Then we write
\begin{equation*}
\begin{aligned}
K_{1,2}(x,0)=~&\pi\int_{\R}e^{ix\xi}\left(\frac{\xi}{\e^4(\sqrt{a(\xi)}+\sqrt{b(\xi)})}-\frac{sgn(\xi)}{\e^3\left(\sqrt{\frac{1-\sqrt{3}i}{2}}+\sqrt{\frac{1+\sqrt{3}i}{2}}\right)}\right)d\xi\\
=~&\pi\int_{\R}e^{ix\xi}\frac{\xi}{\e^4(\sqrt{a(\xi)}+\sqrt{b(\xi)})}d\xi-\frac1x\frac{2 i}{\e^3\left(\sqrt{\frac{1-\sqrt{3}i}{2}}+\sqrt{\frac{1+\sqrt{3}i}{2}}\right)}.
\end{aligned}	
\end{equation*}
Then we can proceed the arguments as $I_3$ to treat the first one, and the integration by parts process works due to that the integrand function decays enough after taking differentiation. Then we get that
\begin{equation*}
\e^3|rK_{1,2}(x,0)|\leq \begin{cases}
 C,\quad &\mbox{as}\quad r\to0,\\
 \frac{C\e^\frac12}{r^\frac12},\quad	&\mbox{as}\quad r\to\infty.
 \end{cases}	
\end{equation*}	
\end{proof}

With Lemmas \ref{lea.1} and \ref{lea.2} we state the main result of this section
\begin{theorem}
\label{tha.1}
Let $G(x,y)$ be the Green kernel of the linear operator \eqref{3.green-kernel}. Then for $r$ small, we have
\begin{equation*}
\begin{aligned}
r|\partial_x^m\partial_y^nG(x,y)|
\leq\begin{cases}
C,\quad &\mathrm{for}\quad (m,n)=(1,0)\\
C\left(\min\left\{\log\frac{1+\sqrt{|y|}}{\sqrt{|y|}},\log\frac{1}{\e}\right\}+e^{-\frac{C}{\e^2}|y|}\right),\quad &\mathrm{for}\quad (m,n)=(2,0),\\
C\left(\min\left\{\frac{1}{\sqrt{|y|}},\frac{1}{\e}\right\}+\frac{1}{\e}\max\left\{1,\log\frac{\e^2}{|y|}\right\}
e^{-\frac{C}{\e^2}|y|}\right),\quad &\mathrm{for}\quad (m,n)=(3,0),\\
C\min\left\{\log\frac{1+\sqrt{|y|}}{\sqrt{|y|}},\log\frac{1}{\e}\right\},\quad &\mathrm{for}\quad (m,n)=(0,1),\\
C\e^{-2}\left(\log\frac{1}{\e}+1\right)e^{-\frac{C}{\e^2}|y|}+\frac{C}{\sqrt{|y|}}\e^{-1},\quad &\mathrm{for}\quad (m,n)=(0,2),\\
C\e^{-4}\left(\log\frac{1}{\e}+\max\left\{1,\log\frac{\e^2}{|y|}\right\}\right)e^{-\frac{C}{\e^2}|y|}+\frac{C}{\sqrt{|y|}}\e^{-3},\quad &\mathrm{for}\quad (m,n)=(0,3),\\
C\left(\min\left\{\frac{1}{\sqrt{|y|}},\frac{1}{\e}\right\}+\frac{1}{\e}e^{-\frac{C}{\e^2}|y|}\right),\quad &\mathrm{for}\quad (m,n)=(1,1),\\
C\e^{-2}\left(\min\left\{\frac{1}{\sqrt{|y|}},\frac{1}{\e}\right\}+\frac{1}{\e}\max\left\{1,\log\frac{\e^2}{|y|}\right\}e^{-\frac{C}{\e^2}|y|}\right), &\mathrm{for}\quad (m,n)=(1,2).
\end{cases}	
\end{aligned}	
\end{equation*}
While for $r$ large, we have
\begin{equation*}
\label{a.green2}
r|\partial_x^m\partial_y^nG(x,y)|
\leq
\begin{cases}
C,\quad &\mathrm{for}\quad (m,n)=(1,0),\\
\frac{C}{r^{\frac12}},\quad &\mathrm{for}\quad (m,n)=(2,0),\\
\frac{C}{\e^{\frac12}r^{\frac12}}\quad &\mathrm{for}\quad (m,n)=(3,0),\\
C\log\frac{1}{\e},\quad &\mathrm{for}\quad (m,n)=(0,1),\\
C\e^{-\frac32}r^{-\frac12},\quad &\mathrm{for}\quad (m,n)=(0,2),\\
C\e^{-\frac72}r^{-\frac12},\quad &\mathrm{for}\quad (m,n)=(0,3),\\
C\e^{-\frac12}r^{-\frac12},\quad &\mathrm{for}\quad (m,n)=(1,1),\\
C\e^{-\frac52}r^{-\frac12},\quad &\mathrm{for}\quad (m,n)=(1,2).	
\end{cases}
\end{equation*}
In addition, we have the following estimation on the integration $\int_{B_r(0)}|\partial_x^m\partial_y^nG(x,y)|dxdy$, denoted by $\mathcal{I}_{m,n}$
\begin{equation*}
\mathcal{I}_{m,n}\leq\begin{cases}
Cr,\quad &\mathrm{for}~(m,n)=(1,0),\\
Cr^\frac12,\quad &\mathrm{for}~(m,n)=(2,0),\\
C\left(r^{\frac12}+\e^{\frac12}\left(\log(1+r)\right)^2r^{\frac14}\right),\quad &\mathrm{for}~ (m,n)=(3,0),\\
Cr\left(\log r\right)^2,\quad &\mathrm{for}~(m,n)=(0,1),\\
C\left(\e^{-1}r^{\frac12}+\e^{-\frac12}r^{\frac14}\log r\right),\quad &\mathrm{for} ~(m,n)=(0,2),\\
C\left(\e^{-3}r^{\frac12}+\e^{-\frac52}\log\frac{1}{\e}\left(\log(1+r)\right)^2r^{\frac14}\right),~ &\mathrm{for}~(m,n)=(0,3),\\
Cr^{\frac12},\quad &\mathrm{for}~(m,n)=(1,1),\\
C\left(\e^{-2}r^{\frac12}+\e^{-\frac32}\left(\log(1+r)\right)^2r^{\frac14}\right),\quad &\mathrm{for}~ (m,n)=(1,2).
\end{cases}
\end{equation*}
\end{theorem}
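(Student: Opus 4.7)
The plan is to derive Theorem \ref{tha.1} from the work already carried out in Lemmas \ref{lea.1} and \ref{lea.2}, together with a routine rescaling, and then to obtain the integral bounds $\mathcal{I}_{m,n}$ by integrating the pointwise estimates over $B_r(0)$. The first step is to pass from the normalized kernel $K(x,y)$ to the Green function $G(x,y)$ of \eqref{3.green-kernel}: the two operators differ only by $\e$-independent positive constants in front of the lower-order terms (together with the small $\e^2$-perturbation of the coefficient of $\partial_x^2$, which is absorbed into $D(\xi)$ without changing its qualitative behavior). Consequently, an explicit anisotropic change of variables in the Fourier integrals converts Lemma \ref{lea.1} and Lemma \ref{lea.2} into the pointwise bounds displayed in the first two arrays of the theorem, after relabeling constants.

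The new content is the collection of integral estimates $\mathcal{I}_{m,n}$. The plan is to split $B_r(0)$ into two regimes: the local piece $r'=\sqrt{x^2+y^2}\le 1$, on which the ``small $r$'' bounds from the lemmas apply, and the bulk $r'\ge 1$ (relevant only for $r\ge 1$), on which the $(r')^{-1/2}$-type decay is used. Within the local piece I further partition the $y$-axis into a boundary layer $|y|\lesssim \e^2$, where the exponentials $e^{-C|y|/\e^2}$ are of order one and the large prefactors $\e^{-k}$ are active, and an outer layer $|y|\gtrsim \e^2$, on which the $\e$-free terms such as $1/\sqrt{|y|}$ or $\log(1+\sqrt{|y|})/\sqrt{|y|}$ govern the size. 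For each layer I would integrate the $1/r'$ singularity in $x$ first (which produces at most a logarithmic factor through $\mathrm{arcsinh}$), then integrate the remaining function of $|y|$ on the appropriate layer, and finally sum the contributions from the two $r'$-regimes.

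The decisive bookkeeping observation, which explains the precise powers of $\e$ in the stated bounds, is that each $\e^{-k}$ prefactor appears multiplied by $e^{-C|y|/\e^2}$, so integration of this exponential over $[0,\e^2]$ returns a compensating $\e^2$; after the $1/r'$ integration in $x$ and a Cauchy--Schwarz-type step in $|y|$ on the outer layer, this produces the fractional powers of $\e$ that appear in several lines of the table. For instance, in the case $(m,n)=(3,0)$ one has
\begin{equation*}
\int_0^{\e^2}\frac{1}{\e}\max\bigl\{1,\log(\e^2/|y|)\bigr\}\,e^{-C|y|/\e^2}\,d|y|\leq C\e,
\end{equation*}
which, combined with the logarithmic factor from the $x$-integration and the outer-layer contribution estimated via $\int_{\e^2}^{1}|y|^{-1/2}\,d|y|$, yields exactly the claimed bound $\e^{1/2}(\log(1+r))^2 r^{1/4}$.

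The main obstacle is therefore combinatorial rather than analytic: with eight pairs $(m,n)$ and two or three cases per pointwise bound, one must determine in each line which $y$-layer dominates and which scaling produces the sharpest control. I would carry out the full details for the two hardest cases, $(0,3)$ and $(1,2)$, where both the $\e^{-k}$ prefactors and the logarithmic singularities in $|y|$ are most severe, and indicate that the remaining six entries follow by the same layered partition with strictly simpler algebra.
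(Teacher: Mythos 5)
Your overall strategy coincides with the paper's: the pointwise bounds are read off from Lemmas \ref{lea.1} and \ref{lea.2} (the passage from $K$ to $G$ being only a fixed rescaling of the Fourier integrals), and the integral bounds $\mathcal{I}_{m,n}$ are obtained by integrating the small-$r$ pointwise estimates over $B_r(0)$, with full details given only for the representative worst cases $(0,3)$ and $(1,2)$ --- exactly the two cases the paper works out. The only genuine difference is the device used for the $y$-integration of the singular weight: the paper integrates in $x$ first to get $\log(1+r/|y|)$ (and uses polar coordinates with $\rho^{-3/2}|\sin\theta|^{-1/2}$ for the $|y|^{-1/2}$ part), then absorbs the exponential through $e^{-C|y|/\e^2}\le C(\e^2/|y|)^{3/4}$, which is precisely what generates the stated $r^{1/4}$ and half-integer powers of $\e$; you instead split at the boundary layer $|y|\sim\e^2$ and integrate the exponential outright, gaining a full factor $\e^2$. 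Both routes work, and yours is in places slightly sharper, but one bookkeeping caution: the factor $\log(1+r/|y|)$ produced by the $x$-integration depends on $|y|$ and must be kept inside the $y$-integral (on the layer $|y|\le\e^2$ bound it by $\log(1+r/\e^2)$ and use $\log(1+s)\le Cs^{1/2}$); if you pull it out crudely as $\log(1+r)+2\log\frac{1}{\e}$ you create an $\e\log\frac{1}{\e}$ term that is not dominated by the stated bound when $r\sim\e^2$. With that bookkeeping your layered estimate is dominated by (though not literally equal to, as your ``yields exactly'' suggests) the quoted $\e^{1/2}\left(\log(1+r)\right)^2r^{1/4}$ contribution, so the conclusion of the theorem follows as claimed.
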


\begin{proof}
The estimation on the asymptotic behavior and decay estimate follows easily by Lemmas \ref{lea.1} and \ref{lea.2}. It remains to prove the estimation on the integral $\mathcal{I}_{m,n}$.

Since the proof for each case is almost similar, we shall only give the details for $(0,3)$ and $(1,2)$. For the former one, we have
\begin{equation}
\begin{aligned}
\mathcal{I}_{0,3}
\leq~&C\e^{-4}\int_0^r\int_0^r\frac{1}{\sqrt{x^2+y^2}}\left(\log\frac{1}{\e}+\log\frac{1}{|y|}\right)e^{-\frac{C}{\e^2}|y|}dxdy\\
&+C\e^{-3}\int_{B_r(0)}\frac{1}{\sqrt{x^2+y^2}\sqrt{|y|}}dxdy\\
\leq~&C\e^{-4}\int_0^r\left(\int_0^{\frac{r}{|y|}}\frac{1}{\sqrt{x^2+1}}dx\right)\left(\log\frac{1}{\e}+\log\frac{1}{|y|}\right)e^{-\frac{C}{\e^2}|y|}dy\\
&+C\e^{-3}\int_0^r\int_0^{2\pi}\frac{1}{\rho^{\frac32}}\frac{1}{\sqrt{|\sin\theta|}}d\theta d\rho\\
\leq~&C\e^{-4}\int_0^r\left(\log\frac{1}{\e}+\log\frac{1}{|y|}\right)\log\left(1+\frac{r}{|y|}\right)e^{-\frac{C}{\e^2}|y|}dy+C\e^{-3}r^{\frac12}\\
\leq~&\left(\e^{-3}r^{\frac12}+\e^{-\frac52}\log\frac{1}{\e}\left(\log(1+r)\right)^2r^{\frac14}\right).
\end{aligned}
\end{equation}
While for the latter one, we have
\begin{equation}
\begin{aligned}
\mathcal{I}_{1,2}
\leq&~\frac{C}{\e^2}\int_{B_r(0)}\frac{1}{\sqrt{x^2+y^2}}\left(
\min\left\{\frac{1}{\sqrt{|y|}},\frac{1}{\e}\right\}+\frac{1}{\e}\max\left\{1,\log\frac{\e^2}{|y|}\right\}e^{-\frac{C}{\e^2}|y|}\right)dxdy\\
\leq&~C\e^{-3}\int_0^r\left(\int_0^{\frac{r}{|y|}}\frac{1}{\sqrt{x^2+1}}dx\right)\left(1+\log\frac{1}{|y|}\right)e^{-\frac{C}{\e^2}|y|}dxdy\\
&+C\e^{-2}\int_0^r\int_0^{2\pi}\frac{1}{\rho^{\frac32}}\frac{1}{\sqrt{|\sin\theta|}}d\theta d\rho\\
\leq&~C\e^{-3+\frac32}\int_0^r\left(1+\log\frac{1}{|y|}\right)\log\left(1+\frac{r}{|y|}\right)\frac{1}{|y|^\frac34}dy+C\e^{-2}r^{\frac12}\\
\leq&~C\left(\e^{-2}r^{\frac12}+\e^{-\frac32}\left(\log(1+r)\right)^2r^{\frac14}\right).
\end{aligned}
\end{equation}
Hence we finish the proof.
\end{proof}

\section{The linearized KP-I operator}

In this section, we study the mapping properties of the modified linearized KP-I operator
\begin{equation}
\label{3.1}
\begin{aligned}
&\partial_x^4\phi-(2\sqrt{2}-\e^2)\partial_x^2\phi-6(\sqrt{2}-\e^2)\partial_x(\partial_xq\partial_x\phi)-2\partial_y^2\phi+2\e^2\partial_x^2\partial_y^2\phi+\e^4\partial_y^4\phi\\
&=\partial_xh_1+\partial_yh_2.
\end{aligned}
\end{equation}
The functions $\phi$, $h_1$ and $h_2$ are in suitable functional spaces defined as follows respectively:
\begin{equation}
\mathcal{H}_1:=\left\{f\mid f\in \mathcal{H}_{ox},~ \|f\|_a<+\infty\right\},
\end{equation}
\begin{equation}
\mathcal{H}_2:=\left\{f\mid f\in\mathcal{H}_e,~\|f\|^2_{b}:=\int_{\R^2}(|f|^2+|\partial_xf|^2)dxdy<+\infty
\right\},
\end{equation}
and
\begin{equation}
\mathcal{H}_3:=\left\{f\mid f\in\mathcal{H}_{oxy},~\|f\|^2_{c}:=\int_{\R^2}(|f|^2+|\partial_yf|^2)dxdy<+\infty
\right\},	
\end{equation}
where
\begin{equation*}
\begin{aligned}
\|f\|_a^2=\int_{\R^2}\left(|\partial_x^4\phi|^2+\e^4|\partial_x^2\partial_y^2\phi|^2+\e^8|\partial_y^4\phi|^2+|\nabla^2\phi|^2+|\nabla\phi|^2\right)dxdy,	
\end{aligned}
\end{equation*}
$\mathcal{H}_o$ and $\mathcal{H}_e$ are given as
\begin{equation*}
\mathcal{H}_{ox}=\left\{f\mid f(x,y)=-f(-x,y)=f(x,-y)\right\},
\end{equation*}
\begin{equation*}
\mathcal{H}_e=\left\{f\mid f(x,y)=f(-x,y)=f(x,-y)\right\},	
\end{equation*}
and
\begin{equation*}
\mathcal{H}_{oxy}=\left\{f\mid f(x,y)=-f(-x,y)=-f(x,-y)\right\}.	
\end{equation*}

The first result of this section is about the a-priori estimate
\begin{proposition}
\label{pr3.1}
Let $\phi\in \mathcal{H}_1$ be a solution of \eqref{3.1} with $h_1\in\mathcal{H}_2$ and $h_2\in\mathcal{H}_3$. Then there exist positive constants $\e_0$ and $C$ such that for all $\e\in(0,\e_0)$, it holds that
\begin{equation*}
\|\phi\|_a\leq C\left(\|h_1\|_{b}+\|h_2\|_c\right).
\end{equation*}
\end{proposition}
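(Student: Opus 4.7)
The plan is to combine a Fourier-side analysis of the principal part of the operator with a compactness/contradiction argument that exploits the symmetry class $\mathcal{H}_{ox}$ together with the nondegeneracy of the lump $q_\e$ to absorb the linear potential term.

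First I would freeze the lump and study the bare operator
$$L_0 := \partial_x^4 - (2\sqrt{2}-\e^2)\partial_x^2 - 2\partial_y^2 + 2\e^2\partial_x^2\partial_y^2 + \e^4\partial_y^4,$$
whose Fourier symbol $S_\e(\xi) = \xi_1^4 + (2\sqrt{2}-\e^2)\xi_1^2 + 2\xi_2^2 + 2\e^2\xi_1^2\xi_2^2 + \e^4\xi_2^4$ is strictly positive and, by combining term-by-term lower bounds with AM--GM, satisfies $S_\e(\xi) \gtrsim \xi_1^4 + \xi_1^2 + \xi_2^2 + \e^2\xi_1^2\xi_2^2 + \e^4\xi_2^4$ uniformly for small $\e$. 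Writing the bare equation $L_0\phi = \partial_x h_1 + \partial_y h_2$ in Fourier as $\hat\phi = (i\xi_1\hat h_1+i\xi_2\hat h_2)/S_\e$, each multiplier $\xi_1^a\xi_2^b\xi_j/S_\e$ appearing in the $\|\cdot\|_a$-norm can be bounded pointwise by $C(1+|\xi_j|)$ --- in particular the mixed term $\e^2\xi_1^2\xi_2^2$ in $S_\e$ is what controls the mixed derivative $\e^2\partial_x^2\partial_y^2\phi$, and $\e^4\xi_2^4\leq S_\e$ handles $\e^4\partial_y^4\phi$. Plancherel then yields the bare estimate $\|\phi\|_a \lesssim \|h_1\|_b + \|h_2\|_c$ with an $\e$-uniform constant.

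Second, to incorporate the potential $-6(\sqrt{2}-\e^2)\partial_x(\partial_x q_\e\partial_x\phi)$, I would argue by contradiction: assume a sequence $\phi_n\in\mathcal{H}_1$ with $\|\phi_n\|_a = 1$ while $\|h_{1,n}\|_b + \|h_{2,n}\|_c \to 0$. Treating the potential as a forcing $\partial_x\tilde h_{1,n}$ with $\tilde h_{1,n} := 6(\sqrt{2}-\e^2)\partial_x q_\e\partial_x\phi_n$, the Rellich--Kondrachov theorem supplies a subsequence with $\phi_n\rightharpoonup \phi_\infty$ in $\mathcal{H}_1$ and strongly on compact sets. The Schwartz decay of $\partial_x q_\e$, uniform in $\e$, lets me pass to the limit in the potential term, so $\phi_\infty$ solves the limiting linearized KP-I equation about $q_0$. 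By the nondegeneracy hypothesis of Remark 1.2, $\phi_\infty \in \mathrm{span}\{\partial_x q_0, \partial_y q_0\}$. But $\partial_x q_0$ is even in $x$ and $\partial_y q_0$ is odd in $y$, whereas $\mathcal{H}_{ox}$ consists of functions odd in $x$ and even in $y$; both translation modes are orthogonal to this class, so $\phi_\infty\equiv 0$.

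The main obstacle is upgrading $\phi_n\rightharpoonup 0$ to $\|\phi_n\|_a\to 0$, which is what actually produces the contradiction with $\|\phi_n\|_a=1$. Once $\phi_\infty = 0$, strong compact convergence combined with the rapid decay of $\partial_x q_\e$ away from the origin will yield $\tilde h_{1,n}\to 0$ in $\mathcal{H}_2$; then reapplying the bare estimate of step one to $\phi_n$ with forcing $(h_{1,n}+\tilde h_{1,n},\, h_{2,n})$ gives $\|\phi_n\|_a\to 0$. The delicate points are (i) that $\partial_x\tilde h_{1,n}$ involves $\partial_x^2\phi_n$, so controlling $\|\partial_x\tilde h_{1,n}\|_{L^2}$ needs both the Schwartz decay of $\partial_x q_\e$ (for tails on large balls) and $H^2_{loc}$-compactness, and (ii) keeping every constant uniform in $\e\in(0,\e_0)$, which is possible because $S_\e$ dominates its $\e=0$ version up to a universal multiple and $q_\e\to q_0$ smoothly.
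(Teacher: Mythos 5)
Your proposal is essentially correct but takes a genuinely different route from the paper. The paper argues directly and quantitatively: using the spectral result of Liu--Wei for the operator $L=\partial_x^2-(2\sqrt{2}-\e^2)-6\sqrt{2}\bigl(\tfrac{2\sqrt{2}-\e^2}{2\sqrt{2}}\bigr)^{5/2}\partial_xq-2\partial_x^{-2}\partial_y^2$ (exactly one negative eigenvalue $\la_1$ with eigenfunction $\phi_0$), it sets $\phi_1=\partial_x^{-1}\phi_0$, decomposes $\phi=c\phi_1+\phi_2$ with respect to the bilinear form $(\cdot,\cdot)$, uses coercivity $(\psi,\psi)\geq 2\la_*\|\nabla\psi\|^2_{L^2}$ on the complement, and tests the equation against $c\phi_1$ and $\phi_2$ to get the $L^2$ bounds on $\nabla\phi$ and $\partial_x^2\phi$; only afterwards does it move the potential term to the right-hand side and run exactly your symbol/multiplier estimate for the constant-coefficient operator to capture the remaining terms of $\|\cdot\|_a$. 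So your step one coincides with the paper's final bootstrap, while your contradiction--compactness step replaces the eigenfunction decomposition; it is less explicit (no quantitative constant) but more elementary, needing only injectivity of the limiting linearized operator in the symmetry class rather than the full Morse-index-one structure. Three points need tightening: (i) with $\e$ fixed your contradiction argument only yields $C=C(\e)$; since the norm $\|\cdot\|_a$, the operator and $q_\e$ all depend on $\e$, the contradiction sequence must also carry $\e_n\to\bar\e\in[0,\e_0]$, with the limit problem identified at $\bar\e$ (in particular $\bar\e=0$) --- your closing remark gestures at this but the argument as written does not implement it; (ii) $\partial_xq_\e$ is not Schwartz (the lump is rational, $q\sim r^{-1}$, $\partial_xq\sim r^{-2}$), though algebraic decay suffices for the uniform tail estimates, so this is only a misstatement; (iii) since $\|\cdot\|_a$ contains no $\|\phi\|_{L^2}$ term, the weak limit $\phi_\infty$ is controlled only through its derivatives and may a priori grow sublinearly, so to invoke the nondegeneracy classification you should pass to $u_\infty=\partial_x\phi_\infty\in L^2$ (equivalently work at the level of the operator $L$ above, as the paper does) or first establish decay of $\phi_\infty$, because the "kernel = translations" statement is for decaying/$L^2$-type functions.
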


\begin{proof}
Consider the differential operator
$$L\phi=\partial_x^2\phi-(2\sqrt{2}-\e^2)\phi-6\sqrt{2}\left(\frac{2\sqrt{2}-\e^2}{2\sqrt{2}}\right)^{\frac52}(\partial_xq\phi)-2\partial_x^{-2}\partial_y^2\phi.$$
It is known from \cite[Theorem 2]{Liu-Wei-2} that it admits only negative eigenvalue $\la_{1}$, the associated eigenfunction is denoted by $\phi_0$, i.e.,
\begin{equation*}
\partial_x^2\phi_0-(2\sqrt{2}-\e^2)\phi_0-6\sqrt{2}\left(\frac{2\sqrt{2}-\e^2}{2\sqrt{2}}\right)^{\frac52}(\partial_xq\phi_0)-2\partial_x^{-2}\partial_y^2\phi_0+\la_1\phi_0=0.
\end{equation*}
We notice that as $\e\to0$, the negative eigenvalue $\la_{1}$ has a limit value which corresponds to the unique negative eigenvalue of the linearized equation of \eqref{lump-0}. It is known that $\int_{-\infty}^\infty\phi_0dx=0$ for any $y$ and we can define $$\phi_1:=\partial_x^{-1}\phi_0=\int_{-\infty}^x\phi_0dx.$$
Then we see that $\phi_1$ is a function odd in $x$, even in $y$,  and
\begin{equation*}
\partial_x^4\phi_1-(2\sqrt{2}-\e^2)\partial_x^2\phi_1-6\sqrt{2}\left(\frac{2\sqrt{2}-\e^2}{2\sqrt{2}}\right)^{\frac52}\partial_x(\partial_xq\partial_x\phi_1)-2\partial_y^2\phi_1=-\la_1\partial_x^2\phi_1.
\end{equation*}

We decompose $\phi$ into $\phi=c\phi_1+\phi_2,$ with $$c=\frac{\int_{\R^2}\partial_x\phi\partial_x\phi_1dx}{\int_{\R^2}\partial_x\phi_1\partial_x\phi_1dx},$$
and $\phi_2$ belonging to the complement of $\phi_1$ in the space $\mathcal{H}_1$ under the following product
\begin{equation*}
\label{3.pro}
(f,g)
=\int_{\R^2}\left(\partial_x^2f\partial_x^2g+(2\sqrt{2}-\e^2)\partial_xf\partial_xg
+6\sqrt{2}\left(\frac{2\sqrt{2}-\e^2}{2\sqrt{2}}\right)^{\frac52}\partial_xq\partial_xf\partial_xg+2\partial_yf\partial_yg\right)dxdy.
\end{equation*}
For any function $\psi$ in the complement space of $\phi_1$ in $\mathcal{H}_1$, we see that
\begin{equation*}
(\psi, \psi)\geq \la_2\|\partial_x\psi\|^2_{L^2(\R^2)},
\end{equation*}
where $\la_2$ refers to the smallest positive eigenvalue of $L$ and it is not difficult to see that $\la_2$ has a uniform bound for any $\e$. In addition, for such $\psi$, we can choose $\la_*\in(0,1)$ such that
\begin{equation}
\label{3.la*}
\la_2(1-\la_*)-24\la_*\geq 2\la_*.
\end{equation}
As a consequence
\begin{equation}
\label{3.complement}
\begin{aligned}
(\psi,\psi)\geq~&(1-\la_*)(\psi,\psi)-6\sqrt{2}\left(\frac{2\sqrt{2}-\e^2}{2\sqrt{2}}\right)^{\frac52}\la_*\max_{(x,y)\in\R^2}|\partial_xq|\int_{\R^2}(\partial_x\psi)^2dxdy\\
&+2\la_*\int_{\R^2}(\partial_y\psi)^2dxdy\\
\geq~&(\la_2(1-\la_*)-24\la_*)\int_{\R^2}(\partial_x\psi)^2dxdy+2\la_*\int_{\R}(\partial_y\psi)^2dxdy\\
=~&2\la_*\int_{\R^2}|\nabla\psi|^2dxdy,
\end{aligned}
\end{equation}
where we used $\max\limits_{(x,y)\in\R^2}6\sqrt{2}\left(\frac{2\sqrt{2}-\e^2}{2\sqrt{2}}\right)^{\frac52}|\partial_xq(x,y)|\leq24.$ We write \eqref{3.1} as
\begin{equation}
\label{3.3}
\begin{aligned}
&\partial_x^4\phi_2-(2\sqrt{2}-\e^2)\partial_x^2\phi_2-6(\sqrt{2}-\e^2)\partial_x(\partial_xq\partial_x\phi_2)-2\partial_y^2\phi_2+2\e^2\partial_x^2\partial_y^2\phi_2+\e^4\partial_y^4\phi_2\\
&=\partial_xh_1+\partial_yh_2+c\la_1\partial_x^2\phi_1+c\left(-6\sqrt{2}\left(\frac{2\sqrt{2}-\e^2}{2\sqrt{2}}\right)^{\frac52}+6\sqrt{2}-6\e^2
\right)\partial_x(\partial_xq\partial_x\phi_1)\\
&\quad-2c\e^2\partial_x^2\partial_y^2\phi_1-c\e^4\partial_y^4\phi_1.
\end{aligned}
\end{equation}
For convenience, we set
$$\Lambda_\e=\left(6\sqrt{2}\left(\frac{2\sqrt{2}-\e^2}{2\sqrt{2}}\right)^{\frac52}-6\sqrt{2}+6\e^2
\right).$$
It is not difficult to see that $\Lambda_\e=O(\e^2)$ and negative when $\e$ is small enough. Multiplying \eqref{3.3} by $c\phi_1$ and using integration by parts we gain
\begin{equation}
\label{3.4}
\begin{aligned}
&-c\int_{\R^2}\left(\Lambda_\e\partial_xq\partial_x\phi_1\partial_x\phi_2-2\e^2\partial_x\partial_y\phi_1\partial_x\partial_y\phi_2-\e^4\partial_y^2\phi_1\partial_y^2\phi_2\right)dxdy\\
&=-c\int_{\R^2}h_1\partial_x\phi_1dxdy-c\int_{\R^2}h_2\partial_y\phi_1dxdy-c^2\la_1\|\partial_x\phi_1\|_{L^2(\R^2)}^2-c^2\e^4\|\partial_y^2\phi_1\|_{L^2(\R^2)}^2\\
&\quad-2c^2\e^2\|\partial_x\partial_y\phi_1\|^2_{L^2(\R^2)}
+c^2\Lambda_\e\int_{\R^2}\partial_xq|\partial_x\phi_1|^2dxdy.
\end{aligned}
\end{equation}
From \eqref{3.4} and Young's inequality we get
\begin{equation}
\label{3.control-1}
\begin{aligned}
&c^2(-\la_1-\delta_\e)\|\partial_x\phi_1\|_{L^2(\R^2)}^2\\
&\leq-\frac{\la_*}{32\la_1}\left(\e^2\|\partial_x\phi_2\|_{L^2(\R^2)}^2+2\e^2\|\partial_x\partial_y\phi_2\|^2_{L^2(\R^2)}+\e^4\|\partial_y^2\phi_2\|_{L^2(\R^2)}^2\right)\\
&\quad-\frac{2}{\la_1}\|h_1\|^2_{L^2(\R^2)}
-\frac{2}{\la_1}\frac{\|\partial_y\phi_1\|^2_{L^2(\R^2)}}{\|\partial_x\phi_1\|^2_{L^2(\R^2)}}\|h_2\|^2_{L^2(\R^2)},
\end{aligned}
\end{equation}
where $\la_*$ is given in \eqref{3.la*}, $\delta_\e$ depends on $\e$ and can be arbitrarily small as $\e\to0$. In \eqref{3.control-1} we have also used $\delta(\e)\|\partial_x\phi_1\|_{L^2(\R^2)}$ to control the terms $\e^2\|\partial_x\partial_y\phi_1\|_{L^2(\R^2)}^2$ and $\e^4\|\partial_y^2\phi_1\|_{L^2(\R^2)}^2$, due to that $\phi_1$ is a concrete function. By the same reason, we see that the coefficient before $\|h_2\|_{L^2(\R^2)}$ is a specified constant.
\medskip

Multiplying \eqref{3.3} by $\phi_2$ and using \eqref{3.complement} we get
\begin{equation}
\label{3.5}
\begin{aligned}
&2\la_*\|\nabla\phi_2\|^2_{L^2(\R^2)}+2\e^2\|\partial_x\partial_y\phi_2\|^2_{L^2(\R^2)}+\e^4\|\partial_y^2\phi_2\|^2_{L^2(\R^2)}\\
&\leq\int_{\R^2}\left((\partial_x^2\phi_2)^2
+(2\sqrt{2}-\e^2)(\partial_x\phi_2)^2+6\sqrt{2}\left(\frac{2\sqrt{2}-\e^2}{2\sqrt{2}}\right)^{\frac52}\partial_xq(\partial_x\phi_2)^2+2(\partial_y\phi_2)^2\right)dxdy\\
&\quad+\int_{\R^2}\left(2\e^2(\partial_x\partial_y\phi_2)^2+\e^4(\partial_y^2\phi_2)^2\right)dxdy\\
&\leq \Lambda_\e\int_{\R}\partial_xq|\partial_x\phi_2|^2dxdy+\frac{1}{\la_*}\|h_1\|^2_{L^2(\R^2)}+\frac{1}{\la_*}\|h_2\|^2_{L^2(\R^2)}\\
&\quad+\frac{\la_*}{2}\|\nabla\phi_2\|^2_{L^2(\R^2)}+c^2\left(\frac{\la_1^2}{\la_*}+\tilde{\delta}_\e\right)\|\partial_x\phi_1\|^2_{L^2(\R^2)}
+\frac{1}{2}\e^2\|\partial_x\phi_2\|_{L^2(\R^2)}^2\\
&\quad +\e^2\|\partial_x\partial_y\phi_2\|_{L^2(\R^2)}^2
+\frac12\e^4\|\partial_y^2\phi_2\|_{L^2(\R^2)}^2,
\end{aligned}	
\end{equation}
where $\la_*>0$ is given in \eqref{3.la*},  and $\tilde\delta_\e$ can be arbitrary small as $\e\to0$. Taking $\e$ such that
$$\left(2-\Lambda_\e\max\limits_{(x,y)\in\R^2}|\partial_xq|\right)\e^2<\la_*,$$
we get
\begin{equation}
\label{3.control-2}
\begin{aligned}
&\la_*\|\nabla\phi_2\|^2_{L^2(\R^2)}+\e^2\|\partial_x\phi_2\|^2_{L^2(\R^2)}+2\e^2\|\partial_x\partial_y\phi_2\|^2_{L^2(\R^2)}+\e^4\|\partial_y^2\phi_2\|^2_{L^2(\R^2)}\\
&\leq \frac{2}{\la_*}\|h_1\|^2_{L^2(\R^2)}+\frac{2}{\la_*}\|h_2\|^2_{L^2(\R^2)}+2c^2\left(\frac{\la_1^2}{\la_*}+\tilde\delta_\e\right)\|\partial_x\phi_1\|^2_{L^2(\R^2)}.	
\end{aligned}
\end{equation}
Using \eqref{3.control-1} and \eqref{3.control-2} we derive that
\begin{equation}
\label{3.est}
\begin{aligned}
&\frac{c^2\la_1^2}{\la_*}\|\partial_x\phi_1\|^2_{L^2(\R^2)}
+\la_*\|\nabla\phi_2\|^2_{L^2(\R^2)}\\
&\leq\left(\frac{8}{\la_*}\frac{\|\partial_y\phi_1\|^2_{L^2(\R^2)}}{\|\partial_x\phi_1\|^2_{L^2(\R^2)}}+\frac{2}{\la_*}\right)\|h_2\|^2_{L^2(\R^2)}
+\frac{10}{\la_*}\|h_1\|^2_{L^2(\R^2)}.
\end{aligned}
\end{equation}
This implies that
\begin{equation}
\label{3.est-1}
\|\nabla\phi\|^2_{L^2(\R)}\leq C\left(\|h_1\|^2_{L^2(\R^2)}+\|h_2\|^2_{L^2(\R^2)}\right).
\end{equation}
On the other hand, for the specific function $\phi_1$,
\begin{equation}
\label{3.est-2}
\int_{\R^2}(\partial_x^2\phi_1)^2dxdy\leq C\int_{\R^2}(\partial_x\phi_1)^2dxdy.
\end{equation}
Together with \eqref{3.5} we have
\begin{equation}
\label{3.est-3}
\int_{\R^2}(\partial_x^2\phi_2)^2dxdy\leq C\left(\|h_1\|^2_{L^2(\R^2)}+\|h_2\|^2_{L^2(\R^2)}\right).
\end{equation}
As a consequence of \eqref{3.est-1}, \eqref{3.est-2} and \eqref{3.est-3} we get that
\begin{equation}
\label{3.l2}
\|\partial_x(q\partial_x\phi)\|_{L^2(\R^2)}\leq
C\left(\|h_1\|_{L^2(\R^2)}+\|h_2\|_{L^2(\R^2)}\right).
\end{equation}

Next, we write \eqref{3.1} as
\begin{equation}
\label{3.eq-2}
\begin{aligned}
&\partial_x^4\phi-(2\sqrt{2}-\e^2)\partial_x^2\phi-2\partial_y^2\phi+2\e^2\partial_x^2\partial_y^2\phi+\e^4\partial_y^4\phi\\
&=\partial_xh_1+\partial_yh_2+6(\sqrt{2}-\e^2)\partial_x(\partial_xq\partial_x\phi).
\end{aligned}	
\end{equation}
Denote the right handside by $h$, we get from \eqref{3.l2} that
$$\|h\|_{L^2(\R^2)}\leq C\int_{\R^2}(|\partial_xh_1|^2+|\partial_yh_2|^2+|h_1|^2+|h_2|^2)dxdy=C(\|h_1\|_{b}+\|h_2\|_c).$$
Multiplying \eqref{3.eq-2} by $\phi$, after integration by parts and using the H\"older inequality we have
\begin{equation*}
\int_{\R^2}\left(|\partial_x^2\phi|^2+|\nabla\phi|^2+\e^2|\partial_x\partial_y\phi|^2+\e^4|\partial_y^2\phi|^2\right)dxdy\leq
C\|h\|_{L^2(\R^2)}^2.
\end{equation*}
To prove the original conclusion, it is enough to show that each term in the definition of $\|\cdot\|_a$ is bounded by $\|h\|_{L^2(\R^2)}.$ We only give explanation for the term $\|\partial_x^4\phi\|_{L^2(\R^2)}^2$, while the other terms can be handled similarly. Taking Fourier transformation on both sides of \eqref{3.eq-2} we have
\begin{equation*}
\hat\phi(\xi_1,\xi_2)=\frac{1}{\xi_1^4+(2\sqrt{2}-\e^2)\xi_1^2+2\xi_2^2+2\e^2\xi_1^2\xi_2^2+\e^4\xi_2^4}\hat h(\xi_1,\xi_2).
\end{equation*}
It is known that
\begin{align*}
\|\partial_x^4\phi\|_{L^2(\R^2)}=~&\|\xi_1^4\hat\phi\|_{L^2(\R^2)}
=\left\|\frac{\xi_1^4}{\xi_1^4+2\sqrt{2}\xi_1^2+2\xi_2^2+2\e^2\xi_1^2\xi_2^2+\e^4\xi_2^4}\hat h(\xi_1,\xi_2)\right\|_{L^2(\R^2)}\\
\leq~&C\|\hat h\|_{L^2(\R^2)}=C\|h\|_{L^2(\R^2)}\leq C\left(\|h_1\|_{b}+\|h_2\|_{c}\right).
\end{align*}
Then we prove the conclusion.
\end{proof}

After establishing the $L^2$ theory for equation \eqref{3.1}, we consider the \eqref{3.1} in a suitable weighted Sobolev space, which helps us to study the nonlinear problem \eqref{2.equation}. Now, we introduce the weighted Sobolev space for $\phi$, $h_1$ and $h_2$:
\begin{equation}
\label{3.def-phi}
\phi\in \mathcal{F}_1:=\left\{f\mid f\in\mathcal{H}_1,~\|f\|_{*}
<+\infty\right\},
\end{equation}
\begin{equation}
\label{3.def-h1}
h_1\in\mathcal{F}_2:=\left\{f\mid f\in\mathcal{H}_2,~\|f\|_{**}<+\infty\right\},
\end{equation}
and
\begin{equation}
\label{3.def-h2}
h_2\in\mathcal{F}_3:=\left\{f\mid f\in\mathcal{H}_3,~\|f\|_{***}<+\infty\right\},
\end{equation}
where
\begin{equation}
\label{3.def-*}
\begin{aligned}
\|f\|_{*}=~&\|f\|_a+\|(1+r)^{1-\delta}f\|_{L^\infty(\R^2)}+
\frac{1}{\log\frac{1}{\e}}\|(1+r)f\|_{L^\infty(\R^2)}\\
&+\|(1+r)^{\frac32-\delta}\partial_xf\|_{L^\infty(\R^2)}+\e^\frac12\|(1+r)^{\frac32}\partial_xf\|_{L^\infty(\R^2)}\\
&+\|(1+r)^{\frac32}\partial_x^2f\|_{L^\infty(\R^2)}+\e^\frac12\|(1+r)^{\frac32}\partial_x^3f\|_{L^\infty(\R^2)}\\
&+\e^\frac12\|(1+r)^{\frac32}\partial_x^4f\|_{L^\infty(\R^2)}+\e^\frac12\|(1+r)^{\frac32-\delta}\partial_yf\|_{L^\infty(\R^2)}\\
&+\e^\frac32\|(1+r)^{\frac32}\partial_yf\|_{L^\infty(\R^2)}+\e^\frac32\|(1+r)^{\frac32}\partial_y^2f\|_{L^\infty(\R^2)}\\
&+\e^\frac72\|(1+r)^{\frac32}\partial_y^3f\|_{L^\infty(\R^2)}+\e^\frac{11}2\|(1+r)^{\frac32}\partial_y^4f\|_{L^\infty(\R^2)}\\
&+\e^\frac12\|(1+r)^{\frac32}\partial_x\partial_yf\|_{L^\infty(\R^2)}+\e^{\frac12}\|(1+r)^{\frac32}\partial_x^2\partial_yf\|_{L^\infty(\R^2)}\\
&+\e^{\frac32}\|(1+r)^{\frac32}\partial_x\partial_y^2f\|_{L^\infty(\R^2)}+\e^\frac52\|(1+r)^{\frac32}\partial_x^2\partial_y^2f\|_{L^\infty(\R^2)}\\
&+\e^\frac32\|(1+r)^{\frac32-\delta}\partial_x^{-1}\partial_y^2f\|_{L^\infty(\R^2)}+\e^\frac72\|(1+r)^{\frac32-\delta}\|\partial_x^{-1}\partial_y^3f\|_{L^\infty(\R^2)}\\
&+\e^\frac{11}{2}\|(1+r)^{\frac32-\delta}\partial_x^{-1}\partial_y^4f\|_{L^\infty(\R^2)},
\end{aligned}
\end{equation}
and
\begin{equation}
\label{3.def-**}
\begin{aligned}
\|f\|_{**}=~&\|f\|_{b}+\|(1+r)^{5/2-\delta}h\|_{L^\infty(\R^2)}	
+\|(1+r)^{5/2-\delta}\partial_xh\|_{L^\infty(\R^2)}\\
&+\|(1+r)^{5/2-\delta}\partial_x^2h\|_{L^\infty(\R^2)},
\end{aligned}
\end{equation}
\begin{equation}
\label{3.def-***}
\begin{aligned}
\|f\|_{***}=~&\|f\|_{c}+\|(1+r)^{3-\delta}h\|_{L^\infty(\R^2)}	
+\|(1+r)^{3-\delta}\partial_yh\|_{L^\infty(\R^2)}\\
&+\|(1+r)^{3-\delta}\partial_{xy}h\|_{L^\infty(\R^2)}.
\end{aligned}
\end{equation}
Here $\delta$ is slightly greater than $0$. We remark that for any function $f\in\mathcal{F}_3$, we could define $\partial_x^{-1}f=-\int_x^{\infty}f(s,y)ds$ and it is easy to check that
\begin{equation}
\label{3.x-1}
\|(1+r)^{2-\delta}\partial_x^{-1}f\|_{L^\infty(\R^2)}	
+\|(1+r)^{2-\delta}\partial_x^{-1}\partial_yf\|_{L^\infty(\R^2)}\leq C\|f\|_{***}.
\end{equation}
We shall use \eqref{3.x-1} in the following proposition frequently.

The second result of this section is as follows

\begin{proposition}
\label{pr3.2}
For each $h_1\in\mathcal{F}_2$ and $h_2\in\mathcal{F}_3$. Then there exist positive constants $\e_0$ such that for all $\e\in(0,\e_0)$ the linear equation \eqref{3.1} admits a solution $\phi\in\mathcal{F}_1$, with
\begin{equation}
\label{3.2-1}
\|\phi\|_{*}\leq C\left(\|h_1\|_{**}+\|h_2\|_{***}\right).	
\end{equation}
\end{proposition}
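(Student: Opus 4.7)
The plan is to prove Proposition \ref{pr3.2} by combining the $L^2$ existence of Proposition \ref{pr3.1} with a convolution representation based on the Green kernel estimates of Theorem \ref{tha.1}. First I would rewrite equation \eqref{3.1} as
\begin{equation*}
L_0 \phi = \partial_x h_1 + \partial_y h_2 + 6(\sqrt{2}-\e^2)\partial_x(\partial_xq\,\partial_x\phi),
\end{equation*}
where $L_0$ is the constant-coefficient fourth-order operator of \eqref{3.green-kernel} whose Green function $G$ was analyzed in Section 3. Existence of a solution $\phi\in\mathcal{H}_1$ follows from a standard Galerkin/projection scheme together with the a priori bound of Proposition \ref{pr3.1}; the perturbative term involving $q$ is handled by the rapid decay of $\partial_xq$ and the coercivity in the orthogonal complement of $\phi_1$ that was exploited in Proposition \ref{pr3.1}, so that the $L^2$-bound passes to the limit.

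The weighted pointwise estimates will then be obtained by representing $\phi$ as
\begin{equation*}
\phi = -(\partial_xG)*h_1 - (\partial_yG)*h_2 - 6(\sqrt{2}-\e^2)(\partial_xG)*(\partial_xq\,\partial_x\phi),
\end{equation*}
and differentiating. To control a generic weighted term $\e^{\alpha}\|(1+r)^{\beta}\partial_x^m\partial_y^n\phi\|_{L^{\infty}}$ appearing in $\|\cdot\|_*$, I would redistribute derivatives between the Green function and the data so that the derivatives on $G$ fall in the eight cases $(m,n)\in\{(1,0),(2,0),(3,0),(0,1),(0,2),(0,3),(1,1),(1,2)\}$ already handled in Theorem \ref{tha.1}, while the remaining derivatives on $h_1,h_2$ are absorbed into $\|\cdot\|_{**}$ and $\|\cdot\|_{***}$. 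Each convolution is split into a near region $\{|z-z'|\le 1\}$, where the integrated bound $\mathcal{I}_{m,n}$ of Theorem \ref{tha.1} absorbs the singularity of $G$, and a far region, where the pointwise $r^{-1/2}$ decay of the higher derivatives of $G$ combines with the $(1+r)^{-5/2+\delta}$ or $(1+r)^{-3+\delta}$ decay of the data to produce exactly the weights appearing in \eqref{3.def-*}. The antiderivative quantities $\e^{\alpha}\|(1+r)^{3/2-\delta}\partial_x^{-1}\partial_y^k\phi\|_{L^{\infty}}$ are estimated by simply dropping one $x$-derivative from the Green function factor, which recovers the $(1+r)^{-3/2+\delta}$ weight together with the explicit $\e^{\alpha}$ from the pointwise bounds. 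Finally, the term $(\partial_xG)*(\partial_xq\,\partial_x\phi)$ is estimated by exactly the same recipe, and, because $\partial_xq$ decays like $r^{-2}$, this map is a contraction on $\mathcal{F}_1$ of norm $o(1)$ as $\e\to 0$, so the equation is inverted in $\mathcal{F}_1$ by a fixed-point argument built on top of the $L^2$-solution.

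The main obstacle is the precise $\e$-bookkeeping required to match every individual weighted quantity in $\|\cdot\|_*$ with the correct Green function bound from Theorem \ref{tha.1}. Since the higher derivatives of $G$ only decay at rate $r^{-3/2}$ at infinity and carry $\e$-dependent prefactors $\e^{-1/2},\e^{-3/2},\e^{-5/2},\e^{-7/2}$, the compensating positive powers of $\e$ built into the definition \eqref{3.def-*} must be calibrated exactly to absorb these factors and yield a constant in \eqref{3.2-1} that is uniform as $\e\to 0$. A related subtlety is the logarithmic/exponential degeneration of $G$ near $y=0$, which is matched by the $\log\frac{1}{\e}$ factor in front of the $(1+r)^{-1}$-weighted term in \eqref{3.def-*}. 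Verifying each of the roughly twenty weighted quantities in $\|\cdot\|_*$ is routine but intricate; the truly new ingredient is the design of the norm $\|\cdot\|_*$ itself so that every term is both useful for the nonlinear fixed-point argument of Section 5 and controllable by the Green function estimates of Theorem \ref{tha.1}.
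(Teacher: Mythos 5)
Your overall architecture (existence from Proposition \ref{pr3.1}, Green representation against the constant-coefficient operator \eqref{3.green-kernel}, near/far splitting of the convolutions using the pointwise bounds and the integrated bounds $\mathcal{I}_{m,n}$ of Theorem \ref{tha.1}, and careful $\e$-bookkeeping against the weights in \eqref{3.def-*}) matches the paper's proof. But there is a genuine gap in how you close the argument: you claim that the map $\phi\mapsto 6(\sqrt{2}-\e^2)(\partial_xG)*(\partial_xq\,\partial_x\phi)$ is a contraction on $\mathcal{F}_1$ with norm $o(1)$ as $\e\to0$ ``because $\partial_xq$ decays like $r^{-2}$.'' This is false. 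The coefficient $6(\sqrt{2}-\e^2)$ tends to $6\sqrt{2}$, and $q$ (hence $\partial_xq$, of size roughly $8/3$ near the origin; the paper even uses the bound $\leq 24$ for the rescaled coefficient times $|\partial_xq|$) is essentially independent of $\e$, so spatial decay of $\partial_xq$ gives no smallness of the operator norm on a weighted $L^\infty$-type space — at best it gives compactness. If this map really were a small perturbation, the linearized operator would be invertible by a Neumann series with no reference to the spectral structure of the lump, which cannot be right: the invertibility of \eqref{3.1} genuinely relies on the nondegeneracy of $q$ and the treatment of the negative eigenvalue via the decomposition $\phi=c\phi_1+\phi_2$ in Proposition \ref{pr3.1}. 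The paper avoids any fixed point in $\mathcal{F}_1$: it treats $\partial_x(6(\sqrt{2}-\e^2)\partial_xq\,\partial_x\phi+h_1)+\partial_yh_2$ as the data for the constant-coefficient operator and bounds the convolution $(\partial_x^{m}\partial_y^{n}G)*(\partial_xq\,\partial_x\phi)$ \emph{a priori} for the solution already produced, using H\"older together with $\|\partial_x\phi\|_{L^p}+\|\partial_x^2\phi\|_{L^p}\leq C\|\phi\|_a\leq C(\|h_1\|_b+\|h_2\|_c)$ (Gagliardo--Nirenberg plus Proposition \ref{pr3.1}), the decay of $\partial_xq$ entering only through its integrability on the far region. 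You should replace your contraction step by this a priori estimate; otherwise the proof does not close.

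A secondary, smaller point: the quantities $\e^4\|(1+r)^{3/2}\partial_y^4\phi\|_{L^\infty}$ and $\e^4\|(1+r)^{3/2-\delta}\partial_x^{-1}\partial_y^4\phi\|_{L^\infty}$ cannot be obtained by your recipe of redistributing derivatives onto $G$, since Theorem \ref{tha.1} only covers the eight derivative pairs listed and none of the admissible redistributions reaches these terms; the paper instead reads them off directly from the equation \eqref{3.2-eq} (and its $\partial_x^{-1}$) after the other weighted bounds are in hand. Incorporating that step is routine once the rest is fixed, but it must be stated.
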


\begin{proof}
From Proposition \ref{pr3.1} and Lax-Milgram Theorem we can get a solution $\phi\in\mathcal{H}_1$ to equation \eqref{3.1} provided $h_1\in\mathcal{H}_2$ and $h_2\in\mathcal{H}_3$, the smooth property follows by the Sobolev inequality and Schauder estimate for the biharmonic equation, we refer the readers to \cite{Besov,Besov2,H1,Shao,Simon}. Then we shall focus on the decay estimate in the following. To prove \eqref{3.2-1}, we write equation \eqref{3.1} as
\begin{equation}
\label{3.2-eq}
\begin{aligned}
&\partial_x^4\phi-(2\sqrt{2}-\e^2)\partial_x^2\phi-2\partial_y^2\phi
+2\e^2\partial_x^2\partial_y^2\phi+\e^4\partial_y^4\phi\\	
&=\partial_x\left(6(\sqrt{2}-\e^2)\partial_xq\partial_x\phi+h_1\right)+\partial_yh_2.
\end{aligned}
\end{equation}
In addition, it holds that
\begin{equation*}
\|\phi\|_a\leq C\left(\|h_1\|_{b}+\|h_2\|_{c}\right).
\end{equation*}
As a consequence, we have
\begin{equation}
\label{3.gn-i}
\|\nabla\phi\|_{L^2(\R^2)}+\|\nabla^2\phi\|_{L^2(\R^2)}+\|\partial_y\partial_x^2\phi\|_{L^2(\R^2)}+\|\partial_x^3\phi\|_{L^2(\R^2)}
\leq C\left(\|h_1\|_{b}+\|h_2\|_{c}\right).
\end{equation}
Using Gagliardo-Nirenberg interpolation inequality (see \cite{Adams} for the proof of this inequality)
\begin{equation}
\label{3.gn}
\|D^ju\|_{L^{p_1}(\R^n)}\leq C\|D^mu\|_{L^r(\R^n)}^\alpha\|u\|_{L^{p_2}(\R^n)}^{1-\alpha},
\end{equation}
where $
\frac{1}{p_1}=\frac{j}{n}+\left(\frac1r-\frac{m}{n}\right)\alpha+\frac{1-\alpha}{p_2}.$
Using \eqref{3.gn} and \eqref{3.gn-i} we conclude that
\begin{equation}
\label{3.abc}
\|\partial_x\phi\|_{L^p(\R^2)}+\|\partial_x^2\phi\|_{L^p(\R^2)}\leq C\|\phi\|_a\leq C\left(\|h_1\|_{b}+\|h_2\|_{c}\right),~ \forall p\in[2,+\infty).
\end{equation}

Denote the Green kernel of the linear operator defined on the left hand side of \eqref{3.2-eq} by $G(x,y).$ In the following we shall derive the a-priori estimate \eqref{3.2-1} by analyzing $\|\phi\|_*$ term by term. The proof is quite long, we shall divide our discussion into the following five steps

Step 1. We start with the estimation on $\partial_x^m\phi(x,y)$ for $m=0,1,\cdots,4$. Using Green representation we have
\begin{equation}
\label{3.2-2}
\begin{aligned}
\phi(x,y)=~&-6(\sqrt{2}-\e^2)\int_{\R^2}\partial_xG(x-s,y-t)(\partial_xq\partial_x\phi)(s,t)dsdt\\
&-\int_{\R^2}\partial_xG(x-s,y-t)h_1(s,t)dsdt\\
&-\int_{\R^2}\partial_yG(x-s,y-t)h_2(s,t)dsdt.
\end{aligned}
\end{equation}
We shall consider the asymptotic behavior of $\phi(x,y)$ when $r=\sqrt{x^2+y^2}$ is large. For the first term on the right hand side of \eqref{3.2-2} we have
\begin{equation}
\label{3.2-2-1}
\begin{aligned}
&\left|\int_{\R^2}\partial_xG(x-s,y-t)(\partial_xq\partial_x\phi)(s,t)dsdt\right|\\
&\leq\int_{B_{r/2}(x,y)}\left|\partial_xG(x-s,y-t)(\partial_xq\partial_x\phi)(s,t)\right|dsdt\\
&\quad+\int_{\R^2\setminus B_{r/2}(x,y)}	\left|\partial_xG(x-s,y-t)(\partial_xq\partial_x\phi)(s,t)\right|dsdt\\
&\leq\frac{C}{r^2}\left(\int_{B_{r/2}(x,y)}|\partial_xG(x-s,y-t)|^\frac{6}{5}dsdt\right)^{\frac56}\left(\int_{B_{r/2}(x,y)}|\partial_x\phi(s,t)|^6dsdt\right)^{\frac16}\\
&\quad+\frac{C}{r}\left(\int_{\R^2\setminus B_{r/2}(x,y)}|\partial_xq(s,t)|^2dsdt\right)^{\frac12}
\left(\int_{\R^2\setminus B_{r/2}(x,y)}|\partial_x\phi(s,t)|^2dsdt\right)^{\frac12}\\
&\leq\frac{C}{r}\|\phi\|_a\leq\frac{C}{r}\left(\|h_1\|_{b}+\|h_2\|_c\right),
\end{aligned}
\end{equation}
where we used the fact that $|\partial_xG(x,y)|\leq\frac{C}{r}$ (see Theorem \ref{tha.1}) and \eqref{3.abc}. While for the second term on the right hand side of \eqref{3.2-2}, we have
\begin{equation}
\label{3.2-2-2}
\begin{aligned}	
&\left|\int_{\R^2}\partial_xG(x-s,y-t)h_1(s,t)dsdt\right|\\
&\leq C\|h_1\|_{**}\int_{\R^2}\frac{1}{\sqrt{(x-s)^2+(y-t)^2}}\frac{1}{(1+s^2+t^2)^{5/2-\delta}}dsdt\\
&\leq\frac{C}{r}\|h_1\|_{**}.
\end{aligned}
\end{equation}
Similarly, one can show that
\begin{equation}
\label{3.2-2-2a}
\begin{aligned}	
\left|\int_{\R^2}\partial_yG(x-s,y-t)h_2(s,t)dsdt\right|\leq\frac{C\log\frac{1}{\e}}{r}\|h_2\|_{***}.
\end{aligned}
\end{equation}
Using \eqref{3.2-2-1}, \eqref{3.2-2-2} and \eqref{3.2-2-2a} we get that
\begin{equation}
\label{3.2-3}
\|(1+r)\phi\|_{L^\infty(\R^2)}\leq C\log\frac{1}{\e}\left(\|h_1\|_{**}+\|h_2\|_{***}\right).
\end{equation}
If we do not pursue the good decay of $\phi$, we could replace \eqref{3.2-2-2a} by the following one
\begin{equation}
\label{3.2-3a}
\begin{aligned}
&\left|\int_{\R^2}G(x-s,y-t)\partial_yh_2(s,t)dsdt\right|\\
&\leq \left|\int_{\R^2}\partial_xG(x-s,y-t)\partial_{x}^{-1}\partial_yh_2(s,t)dsdt\right|\\
&\leq \frac{C}{r^{2-\delta}}\|h_2\|_{***}\int_{B_{r/2}(x,y)}|\partial_xG(x-s,y-t)|dsdt\\
&\quad +C\|h_2\|_{***}\int_{\R^2\setminus B_{r/2}(x,y)}\frac{1}{\sqrt{(x-s)^2+(y-t)^2}(1+\sqrt{s^2+t^2})^{2-\delta}}dsdt\\
&\leq\frac{C}{r^{1-\delta}}\|h_2\|_{***}.
\end{aligned}
\end{equation}
Then by \eqref{3.2-2-1}, \eqref{3.2-2-2} and \eqref{3.2-3a} we get
\begin{equation}
\label{3.2-3b}
\|(1+r)^{1-\delta}\phi\|_{L^\infty(\R^2)}\leq C\left(\|h_1\|_{**}+\|h_2\|_{***}\right).
\end{equation}
Next we study the asymptotic behavior of $\partial_x\phi$, we get that
\begin{equation}
\label{3.2-4}
\begin{aligned}
\partial_x\phi(x,y)=~&-6(\sqrt{2}-\e^2)\int_{\R^2}\partial^2_xG(x-s,y-t)(\partial_xq\partial_x\phi)(s,t)dsdt\\
&-\int_{\R^2}\partial_x^2G(x-s,y-t)h_1(s,t)dsdt\\
&-\int_{\R^2}\partial_x\partial_yG(x-s,y-t)h_2(s,t)dsdt.
\end{aligned}
\end{equation}
As \eqref{3.2-2-1}-\eqref{3.2-2-2a}, we could use \eqref{3.abc} and Theorem \ref{tha.1} derive that
\begin{equation}
\label{3.2-5}
\left|\int_{\R^2}\partial_x^2G(x-s,y-t)(\partial_xq\partial_x\phi)(s,t)dsdt\right|\leq Cr^{-\frac32}\|\phi\|_a,
\end{equation}
where we have used the following inequality
\begin{equation*}
\begin{aligned}
\int_{B_{r/2}(0)}|\partial_x^2G(s,t)|^{\frac65}dsdt	
\leq~& \int_{B_1(0)}|\partial_x^2G(s,t)|^{\frac65}dsdt
+\int_{B_{r/2}(0)\setminus B_1(0)}|\partial_x^2G(s,t)|^{\frac65}dsdt
\\
\leq~&C\int_{B_1(0)}\left(\log\frac{1}{\sqrt{|t|}}\frac{1}{\sqrt{s^2+t^2}}\right)^{\frac65}dsdt\\
&+C\int_{B_{r/2}(0)\setminus B_1(0)}\left(\frac{1}{r^{3/2}}\right)^\frac65dsdt\leq Cr^{\frac15}.
\end{aligned}
\end{equation*}
For the second and third terms on the right hand side of \eqref{3.2-4}, we have
\begin{equation}
\label{3.2-6}
\begin{aligned}
&\left|\int_{\R^2}\partial_x^2G(x-s,y-t)h_1(s,t)dsdt\right|\\
&\leq \frac{C}{(1+r)^{\frac52-\delta}}\|h_1\|_{**}\int_{B_{r/2}(x,y)}|\partial_x^2G(x-s,y-t)|dsdt\\
&\quad+\frac{C}{r^{\frac32}}\|h_1\|_{**}\int_{\R^2\setminus B_{r/2}(x,y)}\frac{1}{\left(1+\sqrt{s^2+t^2}\right)^{5/2-\delta}}dsdt\\
&\leq Cr^{-\frac32}\|h_1\|_{**},
\end{aligned}
\end{equation}
where we used Theorem \ref{tha.1}. Similar as \eqref{3.2-6} we get
\begin{equation}
\label{3.2-6a}
\left|\int_{\R^2}\partial_x\partial_yG(x-s,y-t)h_2(s,t)dsdt\right|\leq C\e^{-\frac12}r^{-\frac32}\|h_2\|_{***}.
\end{equation}
Using \eqref{3.2-5}-\eqref{3.2-6a} we have
\begin{equation}
\label{3.2-7}	
\|(1+r)^{\frac32}\partial_x\phi\|_{L^\infty(R^2)}\leq 	C\left(\|h_1\|_{**}+\e^{-\frac12}\|h_2\|_{***}\right).
\end{equation}
As \eqref{3.2-3b}, if we do not pursue the good decay of $\partial_x\phi$, we could replace \eqref{3.2-6a} by the following one
\begin{equation}
\label{3.2-7a}
\begin{aligned}
&\left|\int_{\R^2}\partial_xG(x-s,y-t)\partial_yh_2(s,t)dsdt\right|\\
&=\left|\int_{\R^2}\partial_x^2G(x-s,y-t)\partial_{x}^{-1}\partial_yh_2(s,t)dsdt\right|\\
&\leq \frac{C}{(1+r)^{2-\delta}}\|h_2\|_{**}\int_{B_{r/2}(x,y)}|\partial_x^2G(x-s,y-t)|dsdt\\
&\quad+\frac{C}{r^{3/2}}\|h_2\|_{**}\int_{B_{r/2}(0)}\frac{1}{(1+\sqrt{s^2+t^2})^{2-\delta}}dsdt\\
&\quad+C\|h_2\|_{**}\int_{\R^2\setminus \left(B_{r/2}(x,y)\cup B_{r/2}(0)\right)}\frac{\left(\sqrt{(x-s)^2+(y-s)^2}\right)^{-\frac32}}{\left(1+\sqrt{s^2+t^2}\right)^{2-\delta}}dsdt \\
&\leq Cr^{-\frac32+\delta}\|h_2\|_{**},
\end{aligned}	
\end{equation}
where we used \eqref{3.x-1}. Then combined with \eqref{3.2-5} and \eqref{3.2-6} we get
\begin{equation}
\label{3.2-7b}
\|(1+r)^{\frac32-\delta}\partial_x\phi(x,y)\|_{L^\infty(\R^2)}\leq C(\|h_1\|_{**}+\|h_2\|_{***}).
\end{equation}
To study the term $\partial_x^2\phi$, we notice that
\begin{equation}
\label{3.2-8}
\begin{aligned}
\partial_x^2\phi=~&(6(\sqrt{2}-\e^2)\int_{\R^2}\partial_x^2G(x-s,y-t)\partial_x(\partial_xq\partial_x\phi)(s,t)dsdt\\
&+\int_{\R^2}\partial_x^2G(x-s,y-t)\partial_xh_1(s,t)dsdt\\
&+\int_{\R^2}\partial_x^2G(x-s,y-t)\partial_yh_2(s,t)dsdt.
\end{aligned}
\end{equation}
For the first term on the right hand side of \eqref{3.2-8}, we notice that
\begin{equation}
\label{3.2-9}
\begin{aligned}
&\left|\int_{\R^2}\partial_x^2G(x-s,y-t)\partial_x(\partial_xq\partial_x\phi)(s,t)dsdt\right|\\
&\leq\frac{C}{r^3}\left(\int_{B_{r/2}(x,y)}|\partial_x^2G(x-s,y-t)|^\frac{6}{5}dsdt\right)^{\frac56}\left(\int_{B_{r/2}(x,y)}|\partial_x\phi|^6dsdt\right)^{\frac16}\\
&\quad+\frac{C}{r^2}\left(\int_{B_{r/2}(x,y)}|\partial_x^2G(x-s,y-t)|^\frac{6}{5}dsdt\right)^{\frac56}\left(\int_{B_{r/2}(x,y)}|\partial_x^2\phi|^6dsdt\right)^{\frac16}\\
&\quad+\frac{C}{r^{3/2}}\left(\int_{\R^2\setminus B_{r/2}(x,y)}|\partial_xq|^2dsdt\right)^{\frac12}
\left(\int_{\R^2\setminus B_{r/2}(x,y)}|\partial_x^2\phi|^2dsdt\right)^{\frac12}\\
&\quad+\frac{C}{r^{3/2}}\left(\int_{\R^2\setminus B_{r/2}(x,y)}|\partial_x^2q|^2dsdt\right)^{\frac12}
\left(\int_{\R^2\setminus B_{r/2}(x,y)}|\partial_x\phi|^2dsdt\right)^{\frac12}\leq\frac{C}{r^{\frac32}}\|\phi\|_a.
\end{aligned}
\end{equation}
While for the second and third terms on the right hand side of \eqref{3.2-8}, following the argument as we did in \eqref{3.2-6} we have
\begin{equation}
\label{3.2-10}
\begin{aligned}
\left|\int_{\R^2}\partial_x^2G(x-s,y-t)\partial_xh_1(s,t)dsdt\right|&\leq Cr^{-\frac32}\|h_1\|_{**},\\
\left|\int_{\R^2}\partial_x^2G(x-s,y-t)\partial_yh_2(s,t)dsdt\right|&\leq
Cr^{-\frac32}\|h_2\|_{***}.
\end{aligned}	
\end{equation}
Therefore, from \eqref{3.2-9} and \eqref{3.2-10} we get that
\begin{equation}
\label{3.2-11}	
\|(1+r)^{\frac32}\partial_x^2\phi\|_{L^\infty(R^2)}\leq 	C(\|h_1\|_{**}+\|h_2\|_{**}).
\end{equation}
For $\partial_x^3\phi(x,y)$ we have
\begin{equation}
\label{3.2-13}
\begin{aligned}
\partial_x^3\phi(x,y)=~&(6(\sqrt{2}-\e^2)\int_{\R^2}\partial_x^3G(x-s,y-t)\partial_x(\partial_xq\partial_x\phi)(s,t)dsdt\\
&+\int_{\R^2}\partial_x^3G(x-s,y-t)(\partial_xh_1(s,t)+\partial_yh_2(s,t))dsdt.
\end{aligned}
\end{equation}
For the first term on the right hand side of \eqref{3.2-13}, we have
\begin{equation}
\label{3.2-13a}
\begin{aligned}
&\left|\int_{\R^2}\partial_x^3G(x-s,y-t)\partial_x(\partial_xq\partial_x\phi)(s,t)dsdt\right|\\
&\leq\frac{C}{r^3}\left(\int_{B_{r/2}(x,y)}|\partial_x^3G(x-s,y-t)|^\frac{6}{5}dsdt\right)^{\frac56}\left(\int_{B_{r/2}(x,y)}|\partial_x\phi|^6dsdt\right)^{\frac16}\\
&\quad+\frac{C}{r^2}\left(\int_{B_{r/2}(x,y)}|\partial_x^3G(x-s,y-t)|^\frac{6}{5}dsdt\right)^{\frac56}\left(\int_{B_{r/2}(x,y)}|\partial_x^2\phi|^6dsdt\right)^{\frac16}\\
&\quad+\frac{C\e^{-\frac12}}{r^{3/2}}\left(\int_{\R^2\setminus B_{r/2}(x,y)}|\partial_xq|^2dsdt\right)^{\frac12}
\left(\int_{\R^2\setminus B_{r/2}(x,y)}|\partial_x^2\phi|^2dsdt\right)^{\frac12}\\
&\quad+\frac{C\e^{-\frac12}}{r^{3/2}}\left(\int_{\R^2\setminus B_{r/2}(x,y)}|\partial_x^2q|^2dsdt\right)^{\frac12}
\left(\int_{\R^2\setminus B_{r/2}(x,y)}|\partial_x\phi|^2dsdt\right)^{\frac12}\\
&\leq C\e^{-\frac12}r^{-\frac32}\|\phi\|_a+C\e^{-\frac12}r^{-\frac32}\left(\|h_1\|_{**}+\|h_2\|_{***}\right),
\end{aligned}
\end{equation}
where we used
\begin{equation*}
\begin{aligned}
\int_{B_{r/2}(0)}|\partial_x^3G(s,t)|^{\frac65}dsdt\leq~&
\int_{B_1(0)}|\partial_x^3G(s,t)|^{\frac65}dsdt
+C\e^{-\frac35}\int_{B_{r/2}(0)\setminus B_1(0)}\frac{1}{(\sqrt{s^2+t^2})^{\frac95}}dsdt\\
\leq~&C+C\e^{-\frac35}r^{\frac{1}{10}}.
\end{aligned}	
\end{equation*}
For the second term on the right hand side of  \eqref{3.2-13}, as \eqref{3.2-6} we have that
\begin{equation}
\label{3.2-13b}
\left|\int_{\R^2}\partial_x^3G(x-s,y-t)\partial_xh_1(s,t)dsdt\right|\leq C\e^{-\frac12}r^{-\frac32}\|h_1\|_{**},
\end{equation}
and
\begin{equation}
\label{3.2-13c}
\left|\int_{\R^2}\partial_x^3G(x-s,y-t)\partial_yh_2(s,t)dsdt\right|\leq C\e^{-\frac12}r^{-\frac32}\|h_2\|_{***}.
\end{equation}
Hence from \eqref{3.2-13a} to \eqref{3.2-13c} we get that
\begin{equation}
\label{3.2-13d}
\|(1+r)^{\frac32}\partial_x^3\phi\|_{L^\infty(R^2)}\leq 	C\e^{-\frac12}(\|h_1\|_{**}+\|h_2\|_{**}).
\end{equation}
Concerning $\partial_x^4\phi(x,y)$, using Green's representation formula and integration by parts we get that
\begin{equation}
\label{3.2-14}
\begin{aligned}
\partial_x^4\phi(x,y)=~&-6(\sqrt{2}-\e^2)\int_{\R^2}\partial_x^3G(x-s,y-t)\partial_x^2(\partial_xq\partial_x\phi)(s,t)dsdt\\
&-\int_{\R^2}\partial_x^3G(x-s,y-t)\partial_{x}^2h_1(s,t)dsdt\\
&-\int_{\R^2}\partial_x^3G(x-s,y-t)\partial_{xy}h_2(s,t)dsdt.
\end{aligned}	
\end{equation}
For the first term on the right hand side of \eqref{3.2-14}, we have
\begin{equation}
\label{3.2-15}
\begin{aligned}
&\left|\int_{\R^2}\partial_x^3G(x-s,y-t)\partial_x^2(\partial_xq\partial_x\phi)(s,t)dsdt\right|\\
&\leq C\e^{-\frac12}r^{-\frac72}(\|h_1\|_{**}+\|h_2\|_{***})\int_{B_{r/2}(x,y)}|\partial^3_xG(x-s,y-t)|dsdt
\\
&\quad+\frac{C\e^{-\frac12}}{r^{3/2}}\left(\int_{\R^2\setminus B_{r/2}(x,y)}|\partial_xq|^2dsdt\right)^{\frac12}
\left(\int_{\R^2\setminus B_{r/2}(x,y)}|\partial_x^3\phi|^2dsdt\right)^{\frac12}\\
&\quad+\frac{C\e^{-\frac12}}{r^{3/2}}\left(\int_{\R^2\setminus B_{r/2}(x,y)}|\partial_x^2q|^2dsdt\right)^{\frac12}
\left(\int_{\R^2\setminus B_{r/2}(x,y)}|\partial_x^2\phi|^2dsdt\right)^{\frac12}\\
&\quad+\frac{C\e^{-\frac12}}{r^{3/2}}\left(\int_{\R^2\setminus B_{r/2}(x,y)}|\partial_x^3q|^2dsdt\right)^{\frac12}
\left(\int_{\R^2\setminus B_{r/2}(x,y)}|\partial_x\phi|^2dsdt\right)^{\frac12}\\
&\leq C\e^{-\frac12}r^{-\frac32}\left(\|h_1\|_{**}+\|h_2\|_{***}\right)+ C\e^{-\frac12}r^{-\frac32}\|\phi\|_a\\
&\leq C\e^{-\frac12}r^{-\frac32}\left(\|h_1\|_{**}+\|h_2\|_{***}\right),
\end{aligned}
\end{equation}
where we have used \eqref{3.2-7}, \eqref{3.2-11}, \eqref{3.2-13d} and Theorem \ref{tha.1}. For the other two terms on the right hand side of \eqref{3.2-14} we have
\begin{equation}
\label{3.2-15a}
\begin{aligned}
&\left|\int_{\R^2}\partial_x^3G(x-s,y-t)\partial_x^2h_1(s,t)dsdt\right|\\
&\leq \frac{C}{(1+r)^{\frac52-\delta}}\|h_1\|_{**}\int_{B_{r/2}(x,y)}|\partial_x^3G(x-s,y-t)|dsdt\\
&\quad+\frac{C\e^{-\frac12}}{r^{\frac32}}\|h_1\|_{**}\int_{\R^2\setminus B_{r/2}(x,y)}\frac{1}{\left(1+\sqrt{s^2+t^2}\right)^{5/2-\delta}}dsdt\\
&\leq C\e^{-\frac12}r^{-\frac32}\|h_1\|_{**}.
\end{aligned}
\end{equation}
Similarly
\begin{equation}
\label{3.2-15b}
\begin{aligned}
\left|\int_{\R^2}\partial_x^3G(x-s,y-t)\partial_x\partial_yh_2(s,t)dsdt\right|\leq C\e^{-\frac12}r^{-\frac32}\|h_2\|_{***}.
\end{aligned}
\end{equation}
Hence, we have
\begin{equation}
\label{3.2-16}
\|(1+r)^{\frac32}\partial_x^4\phi\|_{L^\infty(R^2)}\leq 	C\e^{-\frac12}(\|h_1\|_{**}+\|h_2\|_{**}).
\end{equation}

Step 2. In this step, we study the terms $\partial_y^{n}\phi(x,y)$ for $n=1,2,3.$ For $\partial_y\phi(x,y)$ we notice that
\begin{equation}
\label{3.3-1}
\begin{aligned}
\partial_y\phi(x,y)=~&-6(\sqrt{2}-\e^2)\int_{\R^2}\partial_x\partial_yG(x-s,y-t)(\partial_xq\partial_x\phi)(s,t)dsdt\\
&-\int_{\R^2}\partial_x\partial_yG(x-s,y-t)h_1(s,t)dsdt\\
&-\int_{\R^2}\partial_y^2G(x-s,y-t)h_2(s,t)dsdt.
\end{aligned}
\end{equation}
For the first term on the right hand side of \eqref{3.3-1} we use \eqref{3.2-3b} to derive that
\begin{equation}
\label{3.3-1a}
\begin{aligned}
&\left|\int_{\R^2}\partial_x\partial_yG(x-s,y-t)(\partial_xq\partial_x\phi)(s,t)dsdt\right|\\
&\leq \frac{C}{(1+r)^{7/2-\delta}}\left(\|h_1\|_{**}+\|h_2\|_{***}\right)\int_{B_{r/2}(x,y)}|\partial_x\partial_yG(x-s,y-t)|dsdt\\
&\quad+\frac{C\e^{-\frac12}}{r^{\frac32}}\left(\int_{\R^2\setminus B_{r/2}(x,y)}|\partial_xq|^2dsdt\right)^\frac12\left(\int_{\R^2\setminus B_{r/2}(x,y)}|\partial_x\phi|^2dsdt\right)^\frac12\\
&\leq C\e^{-\frac12}r^{-\frac32}\left(\|h_1\|_{**}+\|h_2\|_{***}\right).
\end{aligned}
\end{equation}
For the other two terms on the right hand side of \eqref{3.3-1} we have
\begin{equation}
\label{3.3-1b}
\begin{aligned}
&\left|\int_{\R^2}\partial_x\partial_yG(x-s,y-t)h_1(s,t)dsdt\right|\\
&\leq \frac{C}{(1+r)^{\frac52-\delta}}\|h_1\|_{**}\int_{B_{r/2}(x,y)}|\partial_x\partial_yG(x-s,y-t)|dsdt\\
&\quad+\frac{C\e^{-\frac12}}{r^{\frac32}}\|h_1\|_{**}\int_{\R^2\setminus B_{r/2}(x,y)}\frac{1}{\left(1+\sqrt{s^2+t^2}\right)^{5/2-\delta}}dsdt \\
&\leq C\e^{-\frac12}r^{-\frac32}\|h_1\|_{**}.
\end{aligned}	
\end{equation}
Similarly
\begin{equation}
\label{3.3-1c}
\begin{aligned}
\left|\int_{\R^2}\partial^2_yG(x-s,y-t)h_2(s,t)dsdt\right|\leq C\e^{-\frac32}r^{-\frac32}\|h_2\|_{***}.
\end{aligned}
\end{equation}
Then we derive that
\begin{equation}
\label{3.3-1d}
\|(1+r)^{\frac32}\partial_y\phi(x,y)\|_{L^\infty(\R^2)}\leq C(\e^{-\frac12}\|h_1\|_{**}+\e^{-\frac32}\|h_2\|_{***}).
\end{equation}
As \eqref{3.2-7a}, we could replace \eqref{3.3-1c} by the following one
\begin{equation}
\label{3.3-1e}
\begin{aligned}
&\left|\int_{\R^2}\partial_yG(x-s,y-t)\partial_yh_2(s,t)dsdt\right|\\
&=\left|\int_{\R^2}\partial_x\partial_yG(x-s,y-t)\partial_{x}^{-1}\partial_yh_2(s,t)dsdt\right|\\
&\leq \frac{C}{(1+r)^{2-\delta}}\|h_2\|_{**}\int_{B_{r/2}(x,y)}|\partial_x\partial_yG(x-s,y-t)|dsdt\\
&\quad+\frac{C}{\e^{\frac12}r^{3/2}}\|h_2\|_{**}\int_{B_{r/2}(0)}\frac{1}{(1+\sqrt{s^2+t^2})^{2-\delta}}dsdt\\
&\quad+C\e^{-\frac12}\|h_2\|_{**}\int_{\R^2\setminus \left(B_{r/2}(x,y)\cup B_{r/2}(0)\right)}\frac{\left(\sqrt{(x-s)^2+(y-s)^2}\right)^{-\frac32}}{\left(1+\sqrt{s^2+t^2}\right)^{2-\delta}}dsdt \\
&\leq C\e^{-\frac12}r^{-\frac32+\delta}\|h_2\|_{**}.	
\end{aligned}	
\end{equation}
Then combined with \eqref{3.3-1b} and \eqref{3.3-1c} we could get a estimation on $\partial_y\phi(x,y)$ with a weaker decay and better coefficient
\begin{equation}
\label{3.3-1f}
\|(1+r)^{\frac32-\delta}\partial_y\phi(x,y)\|_{L^\infty(\R^2)}\leq C\e^{-\frac12}(\|h_1\|_{**}+\|h_2\|_{***}).
\end{equation}
For $\partial_y^2\phi(x,y)$, by Green representation formula we have
\begin{equation}
\label{3.3-2}
\begin{aligned}
\partial_y^2\phi(x,y)=~&6(\sqrt{2}-\e^2)\int_{\R^2}\partial_y^2G(x-s,y-t)\partial_x(\partial_xq\partial_x\phi)(s,t)dsdt\\
&+\int_{\R^2}\partial^2_yG(x-s,y-t)\partial_xh_1(s,t)dsdt\\
&+\int_{\R^2}\partial_y^2G(x-s,y-t)\partial_yh_2(s,t)dsdt.
\end{aligned}
\end{equation}
Using Theorem \ref{tha.1}, \eqref{3.2-7} and \eqref{3.2-11}, as \eqref{3.3-1a}-\eqref{3.3-1c} we get
\begin{equation}
\label{3.3-2a}
\left|\int_{\R^2}\partial_y^2G(x-s,y-t)\partial_x(\partial_xq\partial_x\phi)(s,t)dsdt\right|\leq C\e^{-\frac32}r^{-\frac32}(\|h_1\|_{**}+\|h_2\|_{***}),	
\end{equation}
\begin{equation}
\label{3.3-2b}
\left|\int_{\R^2}\partial_y^2G(x-s,y-t)\partial_xh_1(s,t)dsdt\right|\leq C\e^{-\frac32}r^{-\frac32}\|h_1\|_{**},
\end{equation}
and
\begin{equation}
\label{3.3-2c}
\left|\int_{\R^2}\partial_y^2G(x-s,y-t)\partial_yh_2(s,t)dsdt\right|\leq C\e^{-\frac32}r^{-\frac32}\|h_2\|_{***}.
\end{equation}
Combining \eqref{3.3-2a}-\eqref{3.3-2c} we get
\begin{equation}
\label{3.3-2d}
\|(1+r)^{\frac32}\partial_y^2\phi(x,y)\|_{L^\infty(\R^2)}\leq C\e^{-\frac32}(\|h_1\|_{**}+\|h_2\|_{***}).
\end{equation}
Concerning $\partial_y^3\phi(x,y)$, we have
\begin{equation}
\label{3.3-3}
\begin{aligned}
\partial_y^3\phi(x,y)=~&6(\sqrt{2}-\e^2)\int_{\R^2}\partial_y^3G(x-s,y-t)\partial_x(\partial_xq\partial_x\phi)(s,t)dsdt\\
&+\int_{\R^2}\partial^3_yG(x-s,y-t)\partial_xh_1(s,t)dsdt\\
&+\int_{\R^2}\partial_y^2G(x-s,y-t)\partial_yh_2(s,t)dsdt.
\end{aligned}
\end{equation}
We could use Theorem \ref{tha.1}, and follow the same argument as \eqref{3.3-2a}-\eqref{3.3-2d} to obtain that
\begin{equation}
\label{3.3-3d}
\|(1+r)^{\frac32}\partial_y^3\phi(x,y)\|_{L^\infty(\R^2)}\leq C\e^{-\frac72}(\|h_1\|_{**}+\|h_2\|_{***}).	
\end{equation}
\medskip

Step 3. Now we consider $\partial_x\partial_y\phi$, $\partial_x^2\partial_y\phi,$ $\partial_x\partial_y^2\phi$ and $\partial_x^2\partial_y^2\phi$. By representation formula we have
\begin{equation}
\label{3.3-4}
\begin{aligned}
\partial_x\partial_y\phi(x,y)=&~6(\sqrt{2}-\e^2)\int_{\R^2}\partial_x\partial_yG(x-s,y-t)\partial_x(\partial_xq\partial_x\phi)(s,t)dsdt\\
&+\int_{\R^2}\partial_x\partial_yG(x-s,y-t)\partial_xh_1(s,t)dsdt\\
&+\int_{\R^2}\partial_x\partial_yG(x-s,y-t)\partial_yh_2(s,t)dsdt.
\end{aligned}
\end{equation}
For the first term on the right hand side of \eqref{3.3-4}, by Theorem \ref{tha.1}, \eqref{3.2-7b} and \eqref{3.2-11}, we get
\begin{equation}
\label{3.3-4a}
\begin{aligned}
\left|\int_{\R^2}\partial_x\partial_yG(x-s,y-t)\partial_x(\partial_xq\partial_x\phi)(s,t)dsdt\right|\leq \frac{C\e^{-\frac12}}{r^{\frac32}}\left(\|h_1\|_{**}+\|h_2\|_{***}\right).
\end{aligned}	
\end{equation}
For the other two terms on the right hand side of \eqref{3.3-4}, we have
\begin{equation}
\label{3.3-4b}
\begin{aligned}
\left|\int_{\R^2}\partial_x\partial_yG(x-s,y-t)\partial_xh_1(s,t)dsdt\right|\leq C\e^{-\frac12}r^{-\frac32}\|h_1\|_{**},
\end{aligned}	
\end{equation}
and
\begin{equation}
\label{3.3-4c}
\begin{aligned}
\left|\int_{\R^2}\partial_x\partial_yG(x-s,y-t)\partial_yh_2(s,t)dsdt\right|\leq C\e^{-\frac12}r^{-\frac32}\|h_2\|_{***}.
\end{aligned}	
\end{equation}
The above three inequalities \eqref{3.3-4a1}, \eqref{3.3-4b1} and \eqref{3.3-4c1} give that
\begin{equation}
\label{3.3-4d}
\|(1+r)^{\frac32}\partial_x\partial_y\phi(x,y)\|_{L^\infty(\R^2)}\leq  C\e^{-\frac12}\left(\|h_1\|_{**}+\|h_2\|_{***}\right).
\end{equation}
For the term $\partial_x^2\partial_y\phi$, we have
\begin{equation}
\label{3.3-4-1}
\begin{aligned}
\partial_x^2\partial_y\phi(x,y)=&~-6(\sqrt{2}-\e^2)\int_{\R^2}\partial_x\partial_yG(x-s,y-t)\partial_x^2(\partial_xq\partial_x\phi)(s,t)dsdt\\
&-\int_{\R^2}\partial_x\partial_yG(x-s,y-t)\partial^2_xh_1(s,t)dsdt\\
&-\int_{\R^2}\partial_x\partial_yG(x-s,y-t)\partial_x\partial_yh_2(s,t)dsdt.
\end{aligned}
\end{equation}
For the first term on the right hand side of \eqref{3.3-4-1}, by Theorem \ref{tha.1}, \eqref{3.2-7}, \eqref{3.2-11} and  \eqref{3.2-13d}, following almost the same arguments in \eqref{3.2-15} we get
\begin{equation}
\label{3.3-4a1}
\begin{aligned}
\left|\int_{\R^2}\partial_x\partial_yG(x-s,y-t)\partial_x^2(\partial_xq\partial_x\phi)(s,t)dsdt\right|\leq \frac{C\e^{-\frac12}}{r^{\frac32}}\left(\|h_1\|_{**}+\|h_2\|_{***}\right),
\end{aligned}	
\end{equation}
For the other two terms on the right hand side of \eqref{3.3-4-1}, as \eqref{3.3-1b}-\eqref{3.3-1c} we have
\begin{equation}
\label{3.3-4b1}
\begin{aligned}
\left|\int_{\R^2}\partial_x\partial_yG(x-s,y-t)\partial_x^2h_1(s,t)dsdt\right|\leq C\e^{-\frac12}r^{-\frac32}\|h_1\|_{**},
\end{aligned}	
\end{equation}
and
\begin{equation}
\label{3.3-4c1}
\begin{aligned}
\left|\int_{\R^2}\partial_x\partial_yG(x-s,y-t)\partial_x\partial_yh_2(s,t)dsdt\right|\leq C\e^{-\frac12}r^{-\frac32}\|h_2\|_{***}.
\end{aligned}	
\end{equation}
The above three inequalities give that
\begin{equation}
\label{3.3-4d1}
\|(1+r)^{\frac32}\partial_x^2\partial_y\phi(x,y)\|_{L^\infty(\R^2)}\leq C\e^{-\frac12}\left(\|h_1\|_{**}+\|h_2\|_{***}\right).
\end{equation}
Concerning $\partial_x\partial_y^2\phi$, using integration by parts we have
\begin{equation}
\label{3.3-6}
\begin{aligned}
\partial_x\partial_y^2\phi=&-
6(\sqrt{2}-\e^2)\int_{\R^2}\partial_y^2G(x-s,y-t)\partial_x^2(\partial_xq\partial_x\phi)(s,t)dsdt\\
&-\int_{\R^2}\partial_y^2G(x-s,y-t)\partial_x^2h_1(s,t)dsdt\\
&-\int_{\R^2}\partial_y^2G(x-s,y-t)\partial_x\partial_yh_2(s,t)dsdt.
\end{aligned}	
\end{equation}
Using \eqref{3.2-7}, \eqref{3.2-11}, \eqref{3.2-13d} and Theorem \ref{tha.1}, we get
\begin{equation}
\label{3.3-6a}
\begin{aligned}
&\left|\int_{\R^2}\partial_y^2G(x-s,y-t)\partial_x^2(\partial_xq\partial_x\phi)(s,t)dsdt\right|\\
&\leq~\frac{C}{r^{7/2}}\e^{-\frac12}\left(\|h_1\|_{**}+\|h_2\|_{***}\right)\int_{B_{r/2}(x,y)}|\partial_y^2G(x-s,y-t)|dsdt\\
&\quad+\frac{C}{r^{3/2}}\e^{-\frac32}\left(\int_{\R^2\setminus B_{r/2}(x,y)}|\partial_xq|^2dsdt\right)^{\frac12}\left(\int_{\R^2\setminus B_{r/2}(x,y)}|\partial_x^3\phi|^2 dsdt\right)^{\frac12}\\
&\quad+\frac{C}{r^{3/2}}\e^{-\frac32}\left(\int_{\R^2\setminus B_{r/2}(x,y)}|\partial_x^2q|^2dsdt\right)^{\frac12}\left(\int_{\R^2\setminus B_{r/2}(x,y)}|\partial_x^2\phi|^2 dsdt\right)^{\frac12}\\
&\quad+\frac{C}{r^{3/2}}\e^{-\frac32}\left(\int_{\R^2\setminus B_{r/2}(x,y)}|\partial_x^3q|^2dsdt\right)^{\frac12}\left(\int_{\R^2\setminus B_{r/2}(x,y)}|\partial_x \phi|^2 dsdt\right)^{\frac12}\\
&\leq C\e^{-\frac32}r^{-\frac32}\left(\|h_1\|_{**}+\|h_2\|_{***}\right),	
\end{aligned}	
\end{equation}
and as \eqref{3.3-1b}-\eqref{3.3-1c} we get that
\begin{equation}
\label{3.3-6b}
\begin{aligned}
\left|\int_{\R^2}\partial_y^2G(x-s,y-t)\partial_x^2h_1(s,t)dsdt\right|\leq C\e^{-\frac32}r^{-\frac32}\|h_1\|_{**},
\end{aligned}	
\end{equation}
\begin{equation}
\label{3.3-6c}
\begin{aligned}
\left|\int_{\R^2}\partial_y^2G(x-s,y-t)\partial_x\partial_yh_2(s,t)dsdt\right|\leq C\e^{-\frac32}r^{-\frac32}\|h_2\|_{***}.
\end{aligned}	
\end{equation}
Combining \eqref{3.3-6a}, \eqref{3.3-6b} and \eqref{3.3-6c} we have
\begin{equation}
\label{3.3-6d0}
\|(1+r)^{\frac32}\partial_x\partial_y^2\phi(x,y)\|_{L^\infty(\R^2)}\leq C\e^{-\frac32}\left(\|h_1\|_{**}+\|h_2\|_{***}\right).	
\end{equation}
Following almost the same arguments from \eqref{3.3-6} to \eqref{3.3-6c} we have
\begin{equation}
\label{3.3-6d}
\|(1+r)^{\frac32}\partial_x^2\partial_y^2\phi(x,y)\|_{L^\infty(\R^2)}\leq C\e^{-\frac52}\left(\|h_1\|_{**}+\|h_2\|_{***}\right).	
\end{equation}
\medskip

Step 4. In this step we estimate $\partial_x^{-1}\partial_y^2\phi$ and $\partial_x^{-1}\partial_y^3\phi$, for the former one, using Green's representation formula, we have
\begin{equation}
\label{3.3-7}
\begin{aligned}
\partial_x^{-1}\partial_y^2\phi=~&-
6(\sqrt{2}-\e^2)\int_{\R^2}\partial_y^2G(x-s,y-t)(\partial_xq\partial_x\phi)(s,t)dsdt\\
&-\int_{\R^2}\partial_y^2G(x-s,y-t)h_1(s,t)dsdt\\
&-\int_{\R^2}\partial^2_yG(x-s,y-t)\partial_x^{-1}\partial_yh_2(s,t)dsdt.	
\end{aligned}
\end{equation}
For the first one on the right hand side of \eqref{3.3-7}, we have
\begin{equation}
\label{3.3-7a}
\begin{aligned}
&\left|\int_{\R^2}\partial_y^2G(x-s,y-t)(\partial_xq\partial_x\phi)(s,t)dsdt\right|\\
&\leq Cr^{-7/2+\delta}\left(\|h_1\|_{**}+\|h_2\|_{***}\right)\int_{B_{r/2}(x,y)}|\partial_y^2G(x-s,y-t)|dsdt\\
&\quad +C\e^{-\frac32}r^{-\frac32}\left(\int_{\R^2\setminus B_{r/2}(x,y)}
|\partial_xq|^2dsdt\right)^\frac12\left(\int_{\R^2\setminus B_{r/2}(x,y)}
|\partial_x\phi|^2dsdt\right)^\frac12\\
&\leq C\e^{-\frac32}r^{-\frac32}\left(\|h_1\|_{**}+\|h_2\|_{***}\right).
\end{aligned}	
\end{equation}
For the second term on the right hand side of \eqref{3.3-7} we get that
\begin{equation}
\label{3.3-7b}
\begin{aligned}
&\left|\int_{\R^2}\partial_y^2G(x-s,y-t)h_1(s,t)dsdt\right|\\
&\leq Cr^{-\frac52+\delta}\|h_1\|_{**}\int_{B_{r/2}(x,y)}|\partial_y^2G(x-s,y-t)|dsdt\\
&\quad+C\e^{-\frac32}r^{-\frac32}\int_{\R^2\setminus B_{r/2}(x,y)}|h_1(s,t)|dsdt\\
&\leq C\e^{-\frac32}r^{-\frac32}\|h_1\|_{**},	
\end{aligned}
\end{equation}
while for the third term on the right hand side of \eqref{3.3-7}, by \eqref{3.x-1} we have
\begin{equation}
\label{3.3-7c}
\begin{aligned}
&\left|\int_{\R^2}\partial_y^2G(x-s,y-t)\partial_x^{-1}\partial_yh_2(s,t)dsdt\right|\\
&\leq Cr^{-2+\delta}\|h_2\|_{***}\int_{B_{r/2}(x,y)}|\partial_y^2G(x-s,y-t)|dsdt\\
&\quad+C\e^{-\frac32}r^{-\frac32}\|h_2\|_{***}\int_{B_{r/2}(0)}\frac{1}{(1+\sqrt{s^2+t^2})^{2-\delta}}dsdt\\
&\quad+C\e^{-\frac32}\|h_2\|_{***}\int_{\R^2\setminus(B_{r/2}(x,y)\cup B_{r/2}(0))}\frac{C(1+\sqrt{s^2+t^2})^{-2+\delta}}{(\sqrt{(x-s)^2+(y-t)^2})^{3/2}}dsdt\\
&\leq C\e^{-\frac32}r^{-\frac32+\delta}\|h_2\|_{**}.
\end{aligned}
\end{equation}
Then from \eqref{3.3-7a}-\eqref{3.3-7c} we get that
\begin{equation}
\label{3.3-7d}
\|(1+r)^{\frac32-\delta}\partial_x^{-1}\partial_y^2\phi\|_{L^\infty(\R^2)}\leq C\e^{-\frac32}\left(\|h_1\|_{**}+\|h_2\|_{***}\right).	
\end{equation}
Similarly, we could derive that
\begin{equation}
\label{3.3-8}
\|(1+r)^{\frac32-\delta}\partial_x^{-1}\partial_y^3\phi\|_{L^\infty(\R^2)}\leq C\e^{-\frac72}\left(\|h_1\|_{**}+\|h_2\|_{***}\right).	
\end{equation}
\medskip

Step 5. In the final step we estimate the terms $\partial_y^4\phi$ and $\partial^{-1}_x\partial_y^4\phi$. For the former one, using \eqref{3.2-11}, \eqref{3.2-16}, \eqref{3.3-2d}, \eqref{3.3-6d} and equation \eqref{3.2-eq}, we get
\begin{equation}
\label{3.3-9}
\e^4\|(1+r)^{\frac32}\partial_y^4\phi\|_{L^\infty(\R^2)}	
\leq C\e^{-\frac32}\left(\|h_1\|_{**}+\|h_2\|_{***}\right).
\end{equation}
Concerning the last term $\partial_x^{-1}\partial_y^4\phi$, we take $\partial_x^{-1}$ on both sides of \eqref{3.2-eq}, and we get that
\begin{equation}
\label{3.3-10}
\e^4\|(1+r)^{\frac32-\delta}\partial_x^{-1}\partial_y^4\phi\|_{L^\infty(\R^2)}	
\leq C\e^{-\frac32}\left(\|h_1\|_{**}+\|h_2\|_{***}\right).
\end{equation}
Then we finish the whole proof.
\end{proof}

\section{ The nonlinear system and proof of Theorem \ref{app}}

\indent In section 2, we have seen the leading terms of both equations \eqref{equa1} and \eqref{equa2} can be reduced to the following single equation
\begin{equation}
	\label{1st-order}
	f_1=\frac{\sqrt{2}}{2}\partial_x g_1-\frac{g_1^2}{2}.
\end{equation}
Once $g_1$ is given, then we are able to solve $O(1)$ terms in both \eqref{equa1} and \eqref{equa2} by finding $f_1$ through \eqref{1st-order}. To derive a solution to the original GP equation \eqref{GPe}, we need to solve the equations \eqref{2.equation} and \eqref{Per2}:
\begin{equation}
\label{4-2}
\begin{aligned}
&\partial_x^4g_1-(2\sqrt{2}-\e^2)\partial_x^2g_1-3(\sqrt{2}-\e^2)\partial_x((\partial_xg_1)^2)-2\partial_y^2g_1+2\e^2\partial_x^2\partial_y^2g_1+\e^4\partial_y^4g_1\\
&=P_1+P_2+P_3,	
\end{aligned}	
\end{equation}
and
\begin{equation}
\label{eq:f2}
(\sqrt{2}-\e^2)\partial_xf_2+2g_1f_2=\partial_y^2g_1+\partial_xf_1-(f_1+\e^2f_2)^2g_1,
\end{equation}
where $P_1,~P_2$ and $P_3$ are given in \eqref{2.dec-p1}-\eqref{2.dec-p3}.
\medskip

We look for solutions of \eqref{4-2} such that $g_1=q+\phi$, with $q$ defined in \eqref{2.lump}, then equation \eqref{4-2} could be written as
\begin{equation}
\label{4-2a}
\begin{aligned}
&\partial_x^4\phi-(2\sqrt{2}-\e^2)\partial_x^2\phi-6(\sqrt{2}-\e^2)\partial_x(q\phi)-2\partial_y^2\phi+2\e^2\partial_x^2\partial_y^2\phi+\e^4\partial_y^4\phi\\
&=-\partial_x^4q+(2\sqrt{2}-\e^2)\partial_x^2q+3(\sqrt{2}-\e^2)\partial_x((\partial_xq)^2)+2\partial_y^2q-2\e^2\partial_x^2\partial_y^2q-\e^4\partial_y^4q\\
&\quad+3(\sqrt{2}-\e^2)\partial_x((\partial_x\phi)^2)+P_1+P_2+P_3.	
\end{aligned}	
\end{equation}
We set
\begin{equation}
\label{4.q}
\Gamma_q=-\partial_x^4q+(2\sqrt{2}-\e^2)\partial_x^2q+3(\sqrt{2}-\e^2)\partial_x((\partial_xq)^2)+2\partial_y^2q-2\e^2\partial_x^2\partial_y^2q-\e^4\partial_y^4q.	
\end{equation}
It is not difficult to see that $\Gamma_q\in\mathcal{H}_{ox}$ and
\begin{equation}
\label{4.q-est}
|\Gamma_q|+|\partial_x\Gamma_q|\leq C\e^2(1+r)^{-5}.
\end{equation}
As a consequence,
\begin{equation}
\label{4.q-est1}
\partial_x^{-1}\Gamma_q=-\int_x^\infty\Gamma_q(s,y)ds\quad \mbox{is well defined and we have}~\|\partial_x^{-1}\Gamma_q\|_{**}\leq C\e^2.	
\end{equation}
The estimation \eqref{4.q-est1} suggests that we look for solutions of \eqref{4-2a} in the following space
\begin{equation}
\label{4.fuc}
\mathcal{F}_\phi=\{\phi\in\mathcal{F}_1\mid \|\phi\|_{*}\leq C\e^2\},	
\end{equation}
with $C$ sufficiently large. On the other hand, it is not difficult to check that $\|(\partial_x\phi)^2\|_{**}\leq C\e^2$. Therefore we can absorb $\Gamma_q$ and $3(\sqrt{2}-\e^2)\partial_x((\partial_x\phi)^2)$ into $P_1$.
\medskip


In the following, we shall analyze \eqref{eq:f2}. For a given $g_1=q+\phi$ with $\phi\in\mathcal{F}_\phi$, we solve $f_2$ of \eqref{eq:f2} in the following space
\begin{equation}
\label{def-f3}
\mathcal{F}_{f_2}:=\{f\mid f\in\mathcal{H}_e,~\|f\|_{****}<C\},
\end{equation}
where
\begin{equation}
\begin{aligned}
\|f\|_{****}=~&\|(1+r)^{\frac32-\delta}f\|_{L^\infty(\R^2)}+\|(1+r)^{\frac32-\delta}\partial_xf\|_{L^\infty(\R^2)}\\
&+\|(1+r)^{\frac32-\delta}\partial_x^2f\|_{L^\infty(\R^2)}+\e\|(1+r)^{\frac32-\delta}\partial_x^3f\|_{L^\infty(\R^2)}\\
&+\e^2\|(1+r)^{\frac32-\delta}\partial_yf\|_{L^\infty(\R^2)}+\e^2\|(1+r)^{\frac32-\delta}\partial_x\partial_yf\|_{L^\infty(\R^2)}\\
&+\e^4\|(1+r)^{\frac32-\delta}\partial_y^2f\|_{L^\infty(\R^2)}+\e^4\|(1+r)^{\frac32-\delta}\partial_x\partial_y^2f\|_{L^\infty(\R^2)}.
\end{aligned}
\end{equation}

\begin{lemma}
\label{Le2.1}
Let $\phi\in\mathcal{F}_\phi$. Then for sufficiently small $\varepsilon$ there exists a solution $f_{2,\phi}\in\mathcal{F}_{f_2}$ of \eqref{eq:f2}.
In addition, for any two functions $\phi_1,~\phi_2\in\mathcal{F}_\phi$, we have the corresponding solutions $f_{2,\phi_1}$ and $f_{2,\phi_2}$ satisfying
\begin{equation}
\label{4f-phi}
\begin{aligned}
\|f_{2,\phi_1}-f_{2,\phi_2}\|_{****}\leq C\e^{-\frac32}\|\phi_1-\phi_2\|_{*}.
\end{aligned}
\end{equation}
\end{lemma}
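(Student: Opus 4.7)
The equation \eqref{eq:f2} is a first-order linear transport equation in $x$ (with $y$ as a parameter), perturbed by terms quadratic in $f_2$. Rewriting it as
\begin{equation*}
(\sqrt{2}-\e^2)\partial_x f_2 + 2g_1 f_2 = S_0 - 2\e^2 f_1 g_1 f_2 - \e^4 g_1 f_2^2,\qquad S_0:=\partial_y^2 g_1+\partial_x f_1 - f_1^2 g_1,
\end{equation*}
my plan is to invert the linear operator $(\sqrt{2}-\e^2)\partial_x+2g_1$ by an integrating factor and run a Banach fixed-point argument on the ball $\mathcal{F}_{f_2}$, freezing $f_2$ in the two quadratic terms on the right. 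By the symmetries $q_\e$ odd in $x$, $\phi\in\mathcal{H}_{ox}$, and $f_1,f_2\in\mathcal{H}_e$, every summand of the right-hand side is odd in $x$ and even in $y$, so the symmetry class $f_2\in\mathcal{H}_e$ is preserved.

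Introduce the integrating factor $\mu(x,y):=\exp\!\bigl(\int_0^x 2g_1(s,y)/(\sqrt{2}-\e^2)\,ds\bigr)$. Using \eqref{2.lump} and the bound $\|\phi\|_*\le C\e^2$, a direct computation gives
\begin{equation*}
\mu(x,y) = \left(\frac{\beta_\e y^2+\gamma_\e}{\alpha_\e x^2+\beta_\e y^2+\gamma_\e}\right)^{2}\bigl(1+o(1)\bigr),
\end{equation*}
uniformly in $(x,y)$ as $\e\to0$, where $\alpha_\e,\beta_\e,\gamma_\e$ are the coefficients appearing in \eqref{2.lump}. Thus $\mu$ is even in $x$, bounded above by a constant, monotone decreasing in $|x|$ for each fixed $y$, and decays like $(1+|x|)^{-4}$ along the $x$-axis. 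The oddness in $x$ of the right-hand side, together with the evenness of $\mu$ in $x$, yields the compatibility identity $\int_{-\infty}^\infty\mu(s,y)\cdot[\mathrm{RHS}](s,y)\,ds=0$, which allows us to choose the unique solution decaying as $x\to\pm\infty$:
\begin{equation*}
f_2(x,y) = -\mu(x,y)^{-1}\int_x^{\infty}\mu(s,y)\,\frac{S_0(s,y)-2\e^2 f_1 g_1 f_2-\e^4 g_1 f_2^2}{\sqrt{2}-\e^2}\,ds.
\end{equation*}
The resulting function is automatically even in both $x$ and $y$.

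The weighted bounds entering $\|f_2\|_{****}$ will follow from the decay of $S_0$: using the explicit form of $q_\e$ and $\|\phi\|_*\le C\e^2$, one checks $|S_0(x,y)|\le C(1+r)^{-3}$. Combining this with the monotonicity of $\mu(\cdot,y)$ in $|s|$ (splitting the integral according to $|s|\le 1+|y|$ and $|s|\ge 1+|y|$), I obtain $|f_2(x,y)|\le C(1+r)^{-3/2+\delta}$. The bounds on $\partial_x^k f_2$ are obtained by differentiating the explicit formula; the bounds on $\partial_y^k f_2$ are obtained by differentiating the ODE in $y$ and solving the resulting transport equation with the same $\mu$. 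Each $\partial_y$ derivative costs a factor $\e^{-2}$ because of the cost of $\partial_y^k$ on $\phi$ encoded in \eqref{3.def-*}; these losses are exactly compensated by the powers $\e^{2k}$ in the definition of $\|\cdot\|_{****}$. Contraction of the fixed-point map is then immediate since the frozen nonlinearities are of size $O(\e^2)$ in the relevant norm.

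The Lipschitz estimate \eqref{4f-phi} will be obtained by subtracting the two equations: the difference $w:=f_{2,\phi_1}-f_{2,\phi_2}$ satisfies a linear transport equation of the same form with source linear in $\phi_1-\phi_2$ (through $\partial_y^2(\phi_1-\phi_2)$, $\partial_x(f_{1,\phi_1}-f_{1,\phi_2})$ and lower-order $\phi$-dependent terms) and coefficients differing only by $O(\e^2)$ perturbations of $2q_\e$. Applying the same integrating-factor representation then yields a $\|\cdot\|_{****}$-bound for $w$ in terms of the source. The main obstacle, and the source of the $\e^{-3/2}$ loss, is the weight $\e^{3/2}\|(1+r)^{3/2}\partial_y^2 f\|_{L^\infty(\R^2)}$ in \eqref{3.def-*}, which only gives $|\partial_y^2\phi|\le C\e^{-3/2}(1+r)^{-3/2}\|\phi\|_*$; since $\partial_y^2(\phi_1-\phi_2)$ enters the source of $w$ via the $\partial_y^2 g_1$ term in $S_0$, this produces the loss $\|w\|_{****}\le C\e^{-3/2}\|\phi_1-\phi_2\|_*$ asserted in \eqref{4f-phi}.
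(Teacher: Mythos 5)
Your skeleton coincides with the paper's own proof: an integrating factor for the first-order operator in $x$ (your $\mu$ is, up to a $y$-dependent normalization, the reciprocal of the paper's $F_0$), a one-sided integral representation selected by the odd/even symmetries, a contraction argument in a weighted space, and a Lipschitz bound obtained by subtracting the two equations. The gap is in the decay estimate, which is the real content of the lemma. Your claim $|S_0|\le C(1+r)^{-3}$ is false: $S_0$ contains $\partial_y^2\phi$ (and, through $\partial_x f_1$, also $\partial_x^2\phi$), and the norm \eqref{3.def-*} only controls $\e^{\frac32}\|(1+r)^{\frac32}\partial_y^2\phi\|_{L^\infty}$, so for $\phi\in\mathcal{F}_\phi$ the best available pointwise bound is $|\partial_y^2\phi|\le C\e^{\frac12}(1+r)^{-\frac32}$. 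Feeding this pointwise bound into your representation and using only the monotonicity of $\mu$ gives, along the $x$-axis, $|f_2(x,0)|\le \mu(x,0)^{-1}\int_x^\infty \mu\,|\mathrm{RHS}|\,ds \lesssim \e^{\frac12}(1+|x|)^{-\frac12}$, which falls far short of the $(1+r)^{-\frac32+\delta}$ decay required by every term of $\|\cdot\|_{****}$; the same loss propagates to all the derivative estimates and to \eqref{4f-phi}.

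What is missing is the step the paper performs in \eqref{eq:f4}: integrate by parts in the variation-of-parameters formula so that the source enters only through its $x$-antiderivative $\partial_x^{-1}E_\phi=-\int_x^\infty E_\phi\,ds$, and then estimate $\partial_x^{-1}E_\phi$, $\partial_x^{-1}\partial_yE_\phi$, $\partial_x^{-1}\partial_y^2E_\phi$ directly (see \eqref{4-2.1}, \eqref{4-2.9}, \eqref{4-2.14}). This works because the slowly decaying parts of the source are either exact $x$-derivatives (such as $\partial_x f_1$) or are controlled by the antiderivative components $\e^{\frac32}\|(1+r)^{\frac32-\delta}\partial_x^{-1}\partial_y^2\phi\|_{L^\infty}$, $\e^{\frac72}\|(1+r)^{\frac32-\delta}\partial_x^{-1}\partial_y^3\phi\|_{L^\infty}$, $\e^{\frac{11}2}\|(1+r)^{\frac32-\delta}\partial_x^{-1}\partial_y^4\phi\|_{L^\infty}$ that were built into $\|\cdot\|_*$ precisely for this purpose. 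Your proposal never invokes these antiderivative bounds, so the fixed-point map cannot be shown to land in $\mathcal{F}_{f_2}$, and the argument breaks down before the contraction or the Lipschitz step; the $\e^{-\frac32}$ loss in \eqref{4f-phi} likewise comes from the antiderivative estimates applied to the $\phi$-difference terms, not merely from the pointwise weight on $\partial_y^2\phi$. (A further, minor, point: since your integrating factor contains $\phi$ through $g_1$, the Lipschitz step would also have to estimate differences of the exponentials $\mu_{\phi_1}-\mu_{\phi_2}$, which the paper avoids by keeping only $2q$ in the operator and treating $2\phi f_2$ as a perturbation.)
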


\begin{proof}
We recall that $f_{1}$ is determined by $g_{1}$ by \eqref{1st-order}. For each given $g_{1},$ we shall solve  \eqref{eq:f2}  by a perturbation argument. To this aim, for each fixed $y\in\mathbb{R},$ let us consider the non-homogeneous equation
\begin{equation}
\label{4.non-h}
\left(  \sqrt{2}-\varepsilon^{2}\right)  \partial_{x}F_\mathfrak{h}+2qF_\mathfrak{h}=\mathfrak{h}.
\end{equation}
It is known the homogeneous equation
\[\left(  \sqrt{2}-\varepsilon^{2}\right)  \partial_{x}F_0+2qF_0=0\]
has a solution of the form
\[F_{0}\left(  x,y\right)  =\exp\left(  -\int_{-\infty}^{x}\frac
{2q\left( s,y\right)  }{\sqrt{2}-\varepsilon^{2}}ds\right).\]
Using the expression
$$q(x,y)=-\left(\frac{2\sqrt{2}}{2\sqrt{2}-\e^2}\right)^2\dfrac{\sqrt{8-2\sqrt{2}\e^2}x}{\frac{2\sqrt{2}-\e^2}{2\sqrt{2}}x^2+\frac{(2\sqrt{2}-\e^2)^2}{4\sqrt{2}}y^2+\frac{3}{2\sqrt{2}}},$$
we derive that
\begin{equation*}
\int_{-\infty}^{x}q\left( s,y\right)  ds\quad\mbox{is close to}\quad-
D_\e\log\left(\frac{2\sqrt{2}-\e^2}{2\sqrt{2}}x^2+\frac{(2\sqrt{2}-\e^2)^2}{4\sqrt{2}}y^2+\frac{3}{2\sqrt{2}}\right),
\end{equation*}
where
$$D_\e=\left(\frac{2\sqrt{2}}{2\sqrt{2}-\e^2}\right)^3\sqrt{2-\frac{\sqrt{2}}{2}\e^2}.$$
As a consequence,
\begin{equation}
\label{eta-decay}
F_{0}\quad\mbox{is close to}\quad \left(x^{2}+\sqrt{2}y^{2}+\frac{3}{2\sqrt{2}}\right)^2.
\end{equation}
In particular, $F_{0}$ is not decaying at infinity and even in $x$. Applying the variation of parameter formula, we can write the solution $F_\mathfrak{h}$ of \eqref{4.non-h} as the following form
\[F_{\mathfrak{h}}=F_{0}\left(  x,y\right)  \int_{-\infty}^{x}\frac{\mathfrak{h}\left(
	s,y\right)  }{F_{0}\left(  s,y\right)  }ds,\]
where the integral makes sense due to the boundedness of $\mathfrak{h}$ and the decay property of $F_0^{-1}$, see \eqref{eta-decay}.

For a given $\phi\in\mathcal{F}_\phi$ and $f_2\in\mathcal{F}_{f_2}$, we write equation \eqref{eq:f2} as
\begin{equation}
\label{eq:f3}
\begin{aligned}
(\sqrt{2}-\e^2)\partial_xf_2+2qf_2=&-2\phi f_2+\partial_y^2(q+\phi)+\partial_x
\left(\frac{\sqrt{2}}{2}\partial_x(q+\phi)-\frac12(q+\phi)^2\right)\\
&-\left[\left(\frac{\sqrt{2}}{2}\partial_x(q+\phi)-\frac12(q+\phi)^2\right)+\e^2f_2\right]^2(q+\phi).
\end{aligned}
\end{equation}
Denoting the right hand side of \eqref{eq:f3} by $E_\phi$ and we define the map $f_2\to F_{E_\phi}$ by $\mathcal{M}(f_2)$.

We shall first derive a solution to \eqref{eq:f3} in the following space
\begin{equation}
\label{4f-sec-def}	
\begin{aligned}
\widetilde{\mathcal{F}}_{f_2}:=\left\{f\mid f\in\mathcal{H}_e,~\|f\|_{*****}<C\right\},	
\end{aligned}
\end{equation}
where
\begin{equation}
\begin{aligned}
\|f\|_{*****}=~&\|(1+r)^{\frac32-\delta}f\|_{L^\infty(\R^2)}+\|(1+r)^{\frac32-\delta}\partial_xf\|_{L^\infty(\R^2)}\\
&+\e^2\|(1+r)^{\frac32-\delta}\partial_yf\|_{L^\infty(\R^2)}+\e^4\|(1+r)^{\frac32-\delta}\partial_y^2f\|_{L^\infty(\R^2)}.
\end{aligned}
\end{equation}
We claim that
\begin{equation}
\label{4.con-1}
\mathcal{M}(f_2)\in\widetilde{\mathcal{F}}_{f_2}.
\end{equation}
Let us analyze the terms defined in $\|\cdot\|_{*****}$ term by term. At first, we write
\begin{equation}
\label{eq:f4}
\begin{aligned}
\mathcal{M}(f_2)=&-F_0(x,y)\int_{x}^\infty\frac{E_\phi(s,y)}{F_0(s,y)}ds\\
=&~\partial_x^{-1}E_\phi(x,y)-F_0(x,y)\int_x^\infty\frac{\partial_x^{-1}E_\phi(s,y)}{F_0^2(s,y)}\partial_xF_0(s,y)ds.
\end{aligned}
\end{equation}
For $f_2\in\mathcal{F}_{f_2}$. By the definition of $E_\phi$, $\|\phi\|_*\leq C\e^2$ and $\|f_2\|_{*****}\leq C,$  it is not difficult to check that
\begin{equation}
\label{4-2.1}
\begin{aligned}
|\partial_x^{-1}E_\phi(x,y)|\leq~&C(1+r)^{-\frac32+\delta}\left(1+\log\frac{1}{\e}\|f\|_{*****}\|\phi\|_*+\e^{-\frac32}\|\phi\|_*\right)\\
\leq~&C(1+r)^{-\frac32+\delta}(1+\e^{-\frac32}\|\phi\|_{*}).
\end{aligned}
\end{equation}
Together with \eqref{eq:f4} we get that
\begin{equation}
\label{4-2.2}
\|(1+r)^{\frac32-\delta}\mathcal{M}(f_2)\|_{L^\infty(\R^2)}\leq C(1+\e^{-\frac32}\|\phi\|_{*}).
\end{equation}
Concerning the term $\partial_xf_2$, by \eqref{eq:f4} we have
\begin{equation}
\label{4-2.2a}
\partial_x\mathcal{M}(f_2)=-\partial_xF_0(x,y)\int_x^{\infty}\frac{E_\phi(s,y)}{F_0(s,y)}ds+E_\phi(x,y).	
\end{equation}
For the right hand side of \eqref{eq:f3} we have
\begin{equation}
\label{4-2.3}
|E_\phi(x,y)|\leq C(1+r)^{-\frac32}\left(1+\|f_2\|_{*****}\|\phi\|_*+\e^{-\frac32}\|\phi\|_*\right).
\end{equation}
Then it is not difficult to verify that
\begin{equation}
\label{4-2.4}
\|(1+r)^{\frac32-\delta}\partial_x\mathcal{M}(f_2)\|_{L^\infty(\R^2)}\leq C(1+\e^{-\frac32}\|\phi\|_{*}).	
\end{equation}
Next, we study the terms $\partial_y\mathcal{M}(f)$ and $\partial_y^2\mathcal{M}(f)$, by direct computation
\begin{equation}
\label{4-2.6}
\begin{aligned}
\partial_y\mathcal{M}(f_2)=&-\partial_yF_0(x,y)\int_x^\infty\frac{E_\phi(s,y)}{F_0(s,y)}ds+F_0(x,y)\int_x^\infty\frac{E_\phi(s,y)\partial_yF_0(s,y)}{F_0^2(s,y)}ds\\
&-F_0(x,y)\int_x^\infty\frac{\partial_yE_\phi(s,y)}{F_0(s,y)}ds.
\end{aligned}
\end{equation}
Using \eqref{4-2.3} we can estimate the first two terms of \eqref{4-2.6} by
\begin{equation}
\label{4-2.7}	
\begin{aligned}
&\left|\partial_yF_0(x,y)\int_x^\infty\frac{E_\phi(s,y)}{F_0(s,y)}ds\right|+\left|F_0(x,y)\int_x^\infty\frac{E_\phi(s,y)\partial_yF_0(s,y)}{F_0^2(s,y)}ds\right|\\
&\leq C(1+r)^{-\frac32}\left(1+\|f_2\|_{*****}\|\phi\|_*+\e^{-\frac32}\|\phi\|_*\right).	
\end{aligned}	
\end{equation}
While for the last term on the right hand side of \eqref{4-2.6}, as \eqref{eq:f4} we write it as
\begin{equation}
\label{4-2.8}
\begin{aligned}
F_0(x,y)\int_x^\infty\frac{\partial_yE_\phi(s,y)}{F_0(s,y)}ds=~&F_0(x,y)\int_x^\infty\frac{\partial_x^{-1}\partial_yE_\phi(s,y)}{F_0^2(s,y)}\partial_xF_0(s,y)ds\\
&-\partial_x^{-1}\partial_yE_\phi(x,y).
\end{aligned}
\end{equation}
Using the condition $\|\phi\|_*\leq C\e^2$ and $\|f_2\|_{*****}\leq C$ we get that
\begin{equation}
\label{4-2.9}
|\partial_x^{-1}\partial_yE_\phi(x,y)|\leq C(1+r)^{-\frac32+\delta}\left(1+\e^{-2}\log\frac{1}{\e}\|f\|_{****}\|\phi\|_*+\e^{-\frac72}\|\phi\|_*\right).
\end{equation}
Together with \eqref{4-2.6}, \eqref{4-2.7} and \eqref{4-2.8} we have
\begin{equation}
\label{4-2.10}
\|(1+r)^{\frac32-\delta}\partial_y\mathcal{M}(f_2)\|_{L^\infty(\R^2)}\leq C(1+\e^{-\frac72}\|\phi\|_*).
\end{equation}
Concerning $\partial_y^2\mathcal{M}(f_2),$
\begin{equation}
\label{4-2.11}	
\begin{aligned}
\partial_y^2\mathcal{M}(f_2)=&-\partial_y^2F_0(x,y)\int_x^\infty\frac{E_\phi(s,y)}{F_0(s,y)}ds+2\partial_yF_0(x,y)\int_x^\infty\frac{E_\phi(s,y)\partial_yF_0(s,y)}{F_0^2(s,y)}ds\\
&-2\partial_yF_0(x,y)\int_x^\infty\frac{\partial_yE_\phi(s,y)}{F_0(s,y)}ds+2F_0(x,y)\int_x^\infty\frac{\partial_yE_\phi(x,y)\partial_yF_0(s,y)}{F_0^2(s,y)}ds\\
&+F_0(x,y)\int_x^\infty\frac{E_\phi(s,y)(\partial_y^2F_0(s,y)F_0(s,y)-2(\partial_yF_0(s,y))^2)}{F_0^3(s,y)}ds\\
&-F_0(x,y)\int_x^\infty\frac{\partial_y^2E_\phi(s,y)}{F_0(s,y)}ds.
\end{aligned}	
\end{equation}
For the previous five terms on the right hand side of \eqref{4-2.11}, we could use \eqref{4-2.3} and \eqref{4-2.9} to bound them by
\begin{equation}
\label{4-2.12}
C(1+r)^{-\frac32+\delta}
\left(1+\e^{-2}\log\frac{1}{\e}\|f\|_{*****}\|\phi\|_*+\e^{-\frac72}\|\phi\|_*\right).	
\end{equation}
While for the sixth term, we write it as
\begin{equation}
\label{4-2.13}
\begin{aligned}
F_0(x,y)\int_x^\infty\frac{\partial_y^2E_\phi(s,y)}{F_0(s,y)}ds=~&F_0(x,y)\int_x^\infty\frac{\partial_x^{-1}\partial_y^2E_\phi(s,y)}{F_0^2(s,y)}\partial_xF_0(s,y)ds\\
&-\partial_x^{-1}\partial_y^2E_\phi(x,y).
\end{aligned}
\end{equation}
Using the condition $\|\phi\|_*\leq C\e^2$ and $\|f_2\|_{*****}\leq C$,
\begin{equation}
\label{4-2.14}
|\partial_x^{-1}\partial_y^2E_\phi(x,y)|\leq C(1+r)^{-\frac32+\delta}\left(1+\e^{-4}\log\frac{1}{\e}\|f\|_{*****}\|\phi\|_*+\e^{-\frac{11}2}\|\phi\|_*\right).
\end{equation}
Combined with \eqref{4-2.11}-\eqref{4-2.14} we get that
\begin{equation}
\label{4-2.15}
\|(1+r)^{\frac32-\delta}\partial_y^2\mathcal{M}(f_2)\|_{L^\infty(\R^2)}\leq C(1+\e^{-\frac{11}2}\|\phi\|_*).		
\end{equation}
Using \eqref{4-2.2}, \eqref{4-2.4}, \eqref{4-2.10}, \eqref{4-2.15} we get
\begin{equation}
\label{4.est-1-f2}
\|\mathcal{M}(f_2)\|_{*****}\leq C(1+\e^{-\frac32}\|\phi\|_*).
\end{equation}
It proves \eqref{4.con-1}.
\medskip

Next we shall show that $\mathcal{M}$ is a contraction map in $\widetilde{\mathcal{F}}_{f_2}$. For any $f_{2,1},f_{2,2}\in\widetilde{\mathcal{F}}_{f_2}$, we have
\[\mathcal{M}(f_{2,1})  -\mathcal{M}(f_{2,2})
=-F_0(x,y)\int_x^\infty\frac{H(f_{2,1},f_{2,2})}{F_0(s,y)}ds,\]
where
\begin{align*}
H(f_{2,1},f_{2,2})=-2\phi(f_{2,1}-f_{2,2})-2\e^2f_1g_1(f_{2,1}-f_{2,2})-\e^4g_1(f_{2,1}-f_{2,2})(f_{2,1}+f_{2,2}).
\end{align*}
By the condition $\|\phi\|_*\leq C\e^2$, we could show that
\begin{equation}
\label{4-2.16}
\begin{aligned}
\|(1+r)^{\frac32-\delta}\partial_x^{-1}H(f_{2,1},f_{2,2})\|_{L^\infty(\R^2)}&\leq C\e^\frac32\|f_{2,1}-f_{2,2}\|_{*****},\\
\|(1+r)^{\frac32-\delta}H(f_{2,1},f_{2,2})\|_{L^\infty(\R^2)}&\leq C\e^\frac32\|f_{2,1}-f_{2,2}\|_{*****},\\
\|(1+r)^{\frac32-\delta}\partial_y H(f_{2,1},f_{2,2})\|_{L^\infty(\R^2)}&\leq C\|f_{2,1}-f_{2,2}\|_{*****},\\
\|(1+r)^{\frac32-\delta}\partial_y^2H(f_{2,1},f_{2,2})\|_{L^\infty(\R^2)}&\leq C\e^{-2}\|f_{2,1}-f_{2,2}\|_{*****}.
\end{aligned}
\end{equation}
From the above inequality \eqref{4-2.16} one can prove that
\begin{equation}
\label{4-2.17}
\|\mathcal{M}(f_{2,1})-\mathcal{M}(f_{2,2})\|_{*****}	\leq C\e^{\frac32}\|f_{2,1}-f_{2,2}\|_{*****}.
\end{equation}
Then using the contraction mapping principle we deduce the existence of solution $f_2\in\widetilde{\mathcal{F}}_{f_2}$.

In order to show that $\mathcal{M}(f_2)\in\mathcal{F}_{f_2}$, we have to obtain the corresponding estimation on the following terms $\partial_x^2\mathcal{M}(f_2),~\partial_x^3\mathcal{M}(f_2)$, $\partial_x\partial_y\mathcal{M}(f_2)$ and $\partial_x\partial_y^2\mathcal{M}(f_2)$. Since the argument is almost similar, we shall use $\partial_x\partial_y\mathcal{M}(f_2)$ as an example to explain how to derive the estimation. We have already proved the existence of a solution to \eqref{eq:f3} in $\widetilde{\mathcal{F}}_{f_2}$, we denote the solution by $f_{2}$ and it satisfies \eqref{eq:f3}. We differentiate \eqref{eq:f3} with respect to $y$, we get that
\begin{equation}
\label{4.e.1}
\begin{aligned}
(\sqrt{2}-\e^2)\partial_x\partial_yf_2=&-2q\partial_yf_2-2\partial_yqf_2-2\partial_y\phi f_2-2\phi\partial_yf_2+\partial_y^3(q+\phi)\\
&+\partial_x\partial_y
\left(\frac{\sqrt{2}}{2}\partial_x(q+\phi)-\frac12(q+\phi)^2\right)\\
&-\partial_y\left(\left[\left(\frac{\sqrt{2}}{2}\partial_x(q+\phi)-\frac12(q+\phi)^2\right)+\e^2f_2\right]^2(q+\phi)\right).
\end{aligned}
\end{equation}
Using $\phi\in\mathcal{F}_\phi$ and $f_2\in\widetilde{F}(f_2)$, we see the right hand side of \eqref{4.e.1} is bounded by
\begin{equation}
\label{4.e.2}
C(1+r)^{-\frac32+\delta}\left(1+\e^{-2}\|f_2\|_{*****}+\e^{-2}\|\phi\|_{*}\|f_2\|_{*****}+\e^{-\frac72}\|\phi\|_{*}\right).	
\end{equation}
As a consequence,
\begin{equation}
\label{4.e.3}
\e^2\|(1+r)^{\frac32-\delta}\partial_x\partial_yf_2\|_{L^\infty(\R^2)}\leq C(1+\|f_2\|_{*****}+\e^{-\frac32}\|\phi\|_*).
\end{equation}
Similarly, we can show that
\begin{equation}
\label{4.e.4}
\begin{aligned}
\|(1+r)^{\frac32-\delta}\partial_x^2f_2\|_{L^\infty(\R^2)}&\leq C(1+\|f_2\|_{*****}+\e^{-\frac32}\|\phi\|_*),\\
\e\|(1+r)^{\frac32-\delta}\partial_x^3f_2\|_{L^\infty(\R^2)}&\leq C(1+\|f_2\|_{*****}+\e^{-\frac32}\|\phi\|_*),\\
\e^4\|(1+r)^{\frac32-\delta}\partial_x\partial_y^2f_2\|_{L^\infty(\R^2)}&\leq C(1+\|f_2\|_{*****}+\e^{-\frac32}\|\phi\|_*).
\end{aligned}	
\end{equation}
From \eqref{4.est-1-f2}, \eqref{4.e.3} and \eqref{4.e.4} we have seen the solution $f_2\in\mathcal{F}_{f_2}$.

It remains to prove \eqref{4f-phi}, we shall first show that
\begin{equation}
\label{4.contraction-1}
\|f_{2,\phi_1}-f_{2,\phi_2}\|_{*****}\leq C\e^{-\frac32}\|\phi_1-\phi_2\|_{*}.	
\end{equation}
We notice that the function $f_{2,\phi_1}-f_{2,\phi_2}$ satisfies
\begin{equation}
\label{4.expansion}
\begin{aligned}
&(\sqrt{2}-\e^2)\partial_x(f_{2,\phi_1}-f_{2,\phi_2})
+2q(f_{2,\phi_1}-f_{2,\phi_2})\\
&=-2\phi_1(f_{2,\phi_1}-f_{2,\phi_2})-2(\phi_1-\phi_2)f_{2,\phi_2}+\partial_y^2(\phi_1-\phi_2)\\
&\quad+\frac{\sqrt{2}}{2}\partial_x^2(\phi_1-\phi_2)
-\partial_x(q\phi_1-q\phi_2)-\frac12\partial_x(\phi_1^2-\phi_2^2)\\
&\quad-\left(\frac{\sqrt{2}}{2}\partial_x(q+\phi_1)-\frac12(q+\phi_1)^2\right)^2(q+\phi_1)\\
&\quad+\left(\frac{\sqrt{2}}{2}\partial_x(q+\phi_2)-\frac12(q+\phi_2)^2\right)^2(q+\phi_2)\\
&\quad-\e^2(\sqrt{2}\partial_x(q+\phi_1)-(q+\phi_1)^2)(q+\phi_1)(f_{2,\phi_1}-f_{2,\phi_2})\\
&\quad+\e^2\left(\frac{\sqrt{2}}{2}\partial_x(q+\phi_1)^2-(q+\phi_1)^3-\frac{\sqrt{2}}{2}\partial_x(q+\phi_2)^2+(q+\phi_2)^3\right)f_{2,\phi_2}\\
&\quad-\e^4(q+\phi_1)(f_{2,\phi_1}^2-f_{2,\phi_2}^2)
-\e^4f_{2,\phi_2}^2(\phi_1-\phi_2)\\
&=\Gamma_1(f_{2,\phi_1},f_{2,\phi_2})+\Gamma_2(\phi_1,\phi_2),
\end{aligned}	
\end{equation}
where
\begin{equation*}
\begin{aligned}
\Gamma_1(f_{2,\phi_1},f_{2,\phi_2})=&-2\phi_1(f_{2,\phi_1}-f_{2,\phi_2})-\e^4(q+\phi_1)(f_{2,\phi_1}^2-f_{2,\phi_2}^2)\\
&-\e^2(\sqrt{2}\partial_x(q+\phi_1)-(q+\phi_1)^2)(q+\phi_1)(f_{2,\phi_1}-f_{2,\phi_2}),
\end{aligned}
\end{equation*}
and $\Gamma_2(\phi_1,\phi_2)$ collects the left terms in \eqref{4.expansion}. As \eqref{4-2.16} we get that
\begin{equation}
\label{4.com-1}
\begin{aligned}
\|(1+r)^{\frac32-\delta}\partial_x^{-1}\Gamma_1(f_{2,\phi_1},f_{2,\phi_2})\|_{L^\infty(\R^2)}&\leq C\e^\frac32 \|f_{2,\phi_1}-f_{2,\phi_2}\|_{*****},\\
\|(1+r)^{\frac32-\delta}\Gamma_1(f_{2,\phi_1},f_{2,\phi_2})\|_{L^\infty(\R^2)}&\leq C\e^\frac32 \|f_{2,\phi_1}-f_{2,\phi_2}\|_{*****},\\
\|(1+r)^{\frac32-\delta}\partial_y \Gamma_1(f_{2,\phi_1},f_{2,\phi_2})\|_{L^\infty(\R^2)}&\leq C\log\frac{1}{\e} \|f_{2,\phi_1}-f_{2,\phi_2}\|_{*****},\\
\|(1+r)^{\frac32-\delta}\partial_y^2\Gamma_1(f_{2,\phi_1},f_{2,\phi_2})\|_{L^\infty(\R^2)}&\leq C\e^{-2}\log\frac{1}{\e} \|f_{2,\phi_1}-f_{2,\phi_2}\|_{*****},
\end{aligned}	
\end{equation}
and
\begin{equation}
\label{4.com-2}
\begin{aligned}
\|(1+r)^{\frac32-\delta}\partial_x^{-1}\Gamma_2(\phi_1,\phi_2)\|_{L^\infty(\R^2)}&\leq C\e^{-\frac32}\|\phi_1-\phi_2\|_{*},\\
\|(1+r)^{\frac32-\delta}\Gamma_2(\phi_1,\phi_2)\|_{L^\infty(\R^2)}&\leq C\e^{-\frac32}\|\phi_1-\phi_2\|_{*},\\
\|(1+r)^{\frac32-\delta}\partial_y \Gamma_2(\phi_1,\phi_2)\|_{L^\infty(\R^2)}&\leq C\e^{-\frac72}\|\phi_1-\phi_2\|_{*},\\
\|(1+r)^{\frac32-\delta}\partial_y^2\Gamma_2(\phi_1,\phi_2)\|_{L^\infty(\R^2)}&\leq C\e^{-\frac{11}{2}}\|\phi_1-\phi_2\|_{*},
\end{aligned}	
\end{equation}
Using \eqref{4.com-1} and \eqref{4.com-2} we could argue as from \eqref{eq:f4} to \eqref{4.est-1-f2} to conclude that
\begin{equation}
\label{4.com-3}
\|f_{2,\phi_1}-f_{2,\phi_2}\|_{*****}\leq C\e^\frac32\|f_{2,\phi_1}-f_{2,\phi_2}\|_{*****}+C\e^{-\frac32}\|\phi_1-\phi_2\|_*,	
\end{equation}
and it implies that
\begin{equation}
\label{4.com-4}
\|f_{2,\phi_1}-f_{2,\phi_2}\|_{*****}\leq C\e^{-\frac32}\|\phi_1-\phi_2\|_*.
\end{equation}
Next we use equation \eqref{4.expansion} and argue as \eqref{4.e.1}-\eqref{4.e.4} to derive that
\begin{equation}
\label{4.com-5}
\begin{aligned}
\|(1+r)^{\frac32-\delta}\partial_x^2(f_{2,\phi_1}-f_{2,\phi_2})	\|_{L^\infty(\R^2)}\leq C(\|f_{2,\phi_1}-f_{2,\phi_2}\|_{*****}+\e^{-\frac32}\|\phi_1-\phi_2\|_*),\\
\e\|(1+r)^{\frac32-\delta}\partial_x^3(f_{2,\phi_1}-f_{2,\phi_2})	\|_{L^\infty(\R^2)}\leq C(\|f_{2,\phi_1}-f_{2,\phi_2}\|_{*****}+\e^{-\frac32}\|\phi_1-\phi_2\|_*),\\
\e^2\|(1+r)^{\frac32-\delta}\partial_y\partial_x(f_{2,\phi_1}-f_{2,\phi_2})	\|_{L^\infty(\R^2)}\leq C(\|f_{2,\phi_1}-f_{2,\phi_2}\|_{*****}+\e^{-\frac32}\|\phi_1-\phi_2\|_*),\\
\e^4\|(1+r)^{\frac32-\delta}\partial_y^2\partial_x(f_{2,\phi_1}-f_{2,\phi_2})	\|_{L^\infty(\R^2)}\leq C(\|f_{2,\phi_1}-f_{2,\phi_2}\|_{*****}+\e^{-\frac32}\|\phi_1-\phi_2\|_*).
\end{aligned}
\end{equation}
Together with \eqref{4.com-4} we get \eqref{4f-phi} holds.
\end{proof}

Now we are able to give the proof to Theorem \ref{app}.

\begin{proof}[{Proof of Theorem \ref{app}.}]
With Lemma \ref{Le2.1} we have already seen that for a given function $\phi$, we could find a solution to \eqref{eq:f2}. Therefore, the original problem is equivalent to finding a solution to \eqref{4-2a}. We write \eqref{4-2a} as
\begin{equation}
\label{6.1}
\begin{aligned}
&\partial_x^4\phi-(2\sqrt{2}-\e^2)\partial_x^2\phi-6(\sqrt{2}-\e^2)\partial_x(q\phi)-2\partial_y^2\phi+2\e^2\partial_x^2\partial_y^2\phi+\e^4\partial_y^4\phi\\
&=\hat{P}_1+P_2+P_3,	
\end{aligned}
\end{equation}
where
$$\hat{P}_1=P_1+\Gamma_q+3(\sqrt{2}-\e^2)\partial_x((\partial_x\phi)^2).$$
We shall find a solution of \eqref{6.1} in \eqref{4.fuc} via fixed point argument.

For any $\phi\in\mathcal{F}_\phi$, we first claim that
\begin{equation}
\label{6.2}
\begin{aligned}
\|\hat{P}_1\|_{**}\leq C\e^2.	
\end{aligned}
\end{equation}
To prove the above claim, we have to check all the terms in $\hat{P}_1$ are bounded by $C\e^2$ in the norm of $\|\cdot\|_{**}.$ For example, we use $\partial_x(\partial_x\phi)^2$ and $\e^2\partial_x(\partial_yg_1)^2$ as examples to explain, while the other terms can be handled similarly, we leave the details to the interested reader. For $\partial_x(\partial_x\phi)^2$,
\begin{equation}
\label{6.3}
\begin{aligned}
\|(1+r)^{\frac52-\delta}(\partial_x\phi)^2\|_{L^\infty(\R^2)}&\leq C\|(1+r)^{\frac32-\delta}\partial_x\phi\|^2_{L^\infty(\R^2)}\leq C\|\phi\|_*^2,\\
\|(1+r)^{\frac52-\delta}\partial_x(\partial_x\phi)^2\|_{L^\infty(\R^2)}&\leq C\|\phi\|_*\|(1+r)^{\frac32}\partial_x^2\phi\|_{L^\infty(\R^2)}
\leq C\|\phi\|_*^2,\\
\|(1+r)^{\frac52-\delta}\partial_x^2(\partial_x\phi)^2\|_{L^\infty(\R^2)}&\leq C\|\phi\|_*\|(1+r)^{\frac32}\partial_x^3\phi\|_{L^\infty(\R^2)}
+C\|\phi\|_*^2\\
&\leq C\e^{-\frac12}\|\phi\|_{*}^2.
\end{aligned}
\end{equation}
Thus, we have
\begin{align}
\label{6.4}
\|(\partial_x\phi)^2\|_{**}\leq C\e^2.	
\end{align}
While for the later one, we can check that
\begin{equation}
\label{6.5}
\begin{aligned}
\e^2\|(1+r)^{\frac52-\delta}(\partial_yg_1)^2)\|_{L^\infty(\R^2)}&\leq C\e^2+C\e^\frac32\|\phi\|_*+C\e\|\phi\|_*^2,\\
\e^2\|(1+r)^{\frac52-\delta}\partial_x(\partial_yg_1)^2)\|_{L^\infty(\R^2)}&\leq C\e^2+C\e^\frac32\|\phi\|_*+C\e\|\phi\|_*^2,\\
\e^2\|(1+r)^{\frac52-\delta}\partial_x^2(\partial_yg_1)^2\|_{L^\infty(\R^2)}&\leq C\e^2+C\e^{\frac32}\|\phi\|_*+C\e\|\phi\|_*^2.
\end{aligned}
\end{equation}
It implies that
\begin{equation}
\label{6.6}
\|(\partial_yg_1)^2\|_{**}\leq C\e^{2}.	
\end{equation}
As \eqref{6.2}, we can prove that
\begin{equation}
\label{6.7}
\|P_2\|_{***}\leq C\e^2.	
\end{equation}
Concerning $P_3$, we could show that each term decays faster than $r^{-3}$, then $\partial_x^{-1}P_3$ is well defined. Subsequently we could absorb $P_3$ into $\hat{P}_1$ and prove that $\|\partial_x^{-1}P_3\|_{**}\leq C\e^2$. For example, we shall use the term $g_1\partial_y^2(g_1^2)$ to illustrate this point. It is easy to see that
\begin{equation}
\label{6.8}
\begin{aligned}
\e^2\|(1+r)^{\frac72-\delta}g_1\partial_y^2(g_1^2)\|_{L^\infty(\R^2)}&\leq C\e^2+C\e^\frac12\|\phi\|_*+C\e^{-1}\|\phi\|_*^2+C\e^{-2}\|\phi\|_*^3,\\
\e^2\|(1+r)^{\frac72-\delta}\partial_x(g_1\partial_y^2(g_1^2))\|_{L^\infty(\R^2)}&\leq C\e^2+C\e^{\frac12}\|\phi\|_*+C\e^{-1}\|\phi\|_*^2+C\e^{-2}\|\phi\|_*^3.
\end{aligned}	
\end{equation}
Then one can easily verify that
\begin{equation}
\label{6.9}
\|\partial_x^{-1}(g_1\partial_y^2(g_1^2))\|_{**}\leq C\e^2.	
\end{equation}
Repeating the same process for the other terms in $P_3$ we derive that
\begin{equation}
\label{6.10}
\|\partial_x^{-1}(P_3)\|_{**}\leq C\e^2.
\end{equation}
With \eqref{6.2}, \eqref{6.7} and \eqref{6.10} and Proposition \ref{pr3.2}, we find out a solution (denoted by $\mathcal{N}(\phi)$) to \eqref{6.1} such that
\begin{equation}
\|\mathcal{N}(\phi)\|_{*}\leq C(\|\hat P_1\|_{**}+\|P_2\|_{***}+\|\partial_x^{-1}P_3\|_{**})	\leq C\e^2.
\end{equation}
Therefore we could define a map $\phi\to\mathcal{N}(\phi)$ which sends $\mathcal{F}_\phi$ to itself provided $C$ is large enough.

Next, we shall prove the map $\phi\to\mathcal{N}(\phi)$ is a contraction map. We claim that for any two functions $\phi_1,\phi_2\in\mathcal{F}_\phi$,
\begin{equation}
\label{6.11}
\begin{aligned}
\|\hat{P}_1(\phi_1)-\hat{P}_1(\phi_2)\|_{**}&\leq C\e^{\frac12}\|\phi_1-\phi_2\|_{*},\\
\|P_2(\phi_1)-P_2(\phi_2)\|_{***}&\leq C\e^{\frac12}\|\phi_1-\phi_2\|_{*},\\
\|\partial_x^{-1}P_3(\phi_1)-\partial_x^{-1}P_3(\phi_2)\|_{**}&\leq C\e^{\frac12}\|\phi_1-\phi_2\|_{*}.
\end{aligned}
\end{equation}
We shall pick out one term from $\hat P_1$, $P_2$ and $P_3$ (see \eqref{2.dec-p1}-\eqref{2.dec-p3}) respectively as examples to prove \eqref{6.11}. To stress the dependence of $f_1,f_2,g_1$ on $\phi$, we replace them by $f_{1,\phi},f_{2,\phi},g_{1,\phi}.$ For $\hat P_1$, we take $\e^4\partial_x(f_1f_2)$ into consideration, we have
\begin{equation}
\label{6.e.1}
\begin{aligned}
&\e^4(f_{1,\phi_1}f_{2,\phi_1}-f_{1,\phi_2}f_{2,\phi_2})\\
&=\e^4(f_{1,\phi_1}-f_{1,\phi_2})f_{2,\phi_1}+\e^{4}f_{1,\phi_2}(f_{2,\phi_1}-f_{2,\phi_2})\\
&=\e^4\left(\left(\frac{\sqrt{2}}{2}\partial_x(q+\phi_1)-\frac12(q+\phi_1)^2\right)-\left(\frac{\sqrt{2}}{2}\partial_x(q+\phi_2)-\frac12(q+\phi_2)^2\right)\right)f_{2,\phi_1}\\
&\quad+\e^{4}f_{1,\phi_2}(f_{2,\phi_1}-f_{2,\phi_2})\\
&\leq C\e^4(1+r)^{-\frac52+\delta}\|(1+r)^{\frac32-\delta}\partial_x(\phi_1-\phi_2)\|_{L^\infty(\R^2)}\|(1+r)^{\frac32-\delta}f_{2,\phi_1}\|_{L^\infty(\R^2)}\\
&\quad+C\e^4(1+r)^{-\frac52+\delta}\|(1+r)^{\frac32-\delta}(\phi_1-\phi_2)\|_{L^\infty(\R^2)}\|(1+r)^{\frac32-\delta}f_{2,\phi_1}\|_{L^\infty(\R^2)}\\
&\quad+C\e^4(1+r)^{-\frac52+\delta}\|(1+r)^{\frac32-\delta}(\phi_1^2-\phi_2^2)\|_{L^\infty(\R^2)}\|(1+r)^{\frac32-\delta}f_{2,\phi_1}\|_{L^\infty(\R^2)}\\
&\quad+C\e^4(1+r)^{-\frac52+\delta}(1+\|\phi\|_*)\|(1+r)^{\frac32-\delta}(f_{2,\phi_1}-f_{2,\phi_2})\|_{L^\infty(\R^2)}\\
&\leq C(1+r)^{-\frac52+\delta}\e^{\frac12}\|\phi_1-\phi_2\|_*,
\end{aligned}		
\end{equation}
where we used Lemma \ref{Le2.1}. Similarly, one can also show that
\begin{equation}
\label{6.e.2}
\begin{aligned}
\|(1+r)^{\frac52-\delta}\partial_x(f_{1,\phi_1}f_{2,\phi_1}-f_{1,\phi_2}f_{2,\phi_2})\|_{L^\infty(\R^2)}&\leq C\e^\frac12\|\phi_1-\phi_2\|_*,\\
\|(1+r)^{\frac52-\delta}\partial_x^2(f_{1,\phi_1}f_{2,\phi_1}-f_{1,\phi_2}f_{2,\phi_2})\|_{L^\infty(\R^2)}&\leq C\e^\frac12\|\phi_1-\phi_2\|_*.
\end{aligned}
\end{equation}
Using \eqref{6.e.1} and \eqref{6.e.2} we have
\begin{equation}
\label{6.e.3}
\e^4\|f_{1,\phi_1}f_{2,\phi_1}-f_{1,\phi_2}f_{2,\phi_2}\|_{**}\leq C\e^\frac12\|\phi_1-\phi_2\|_*.
\end{equation}
For $P_2$, we consider $\e^2\partial_y(\partial_xg_1\partial_yg_1)$, by direct computation, we have
\begin{equation}
\label{6.e.4}
\begin{aligned}
&|\partial_xg_{1,\phi_1}\partial_yg_{1,\phi_1}-\partial_xg_{1,\phi_2}\partial_yg_{1,\phi_2}|\\
&=|\partial_x(q+\phi_1)\partial_y(q+\phi_1)-\partial_x(q+\phi_2)\partial_y(q+\phi_2)|\\	
&\leq|\partial_xq\partial_y(\phi_1-\phi_2)|+|\partial_yq\partial_x(\phi_1-\phi_2)|+|\partial_x\phi_1\partial_y\phi_1-\partial_x\phi_2\partial_y\phi_2|\\
&\leq C\e^{-\frac12}(1+r)^{-3+\delta}\|\phi_1-\phi_2\|_*(1+\|\phi_1\|_*+\|\phi_2\|_*)\\
&\leq C\e^{-\frac12}(1+r)^{-3+\delta}\|\phi_1-\phi_2\|_*.
\end{aligned}	
\end{equation}
In a similar way as \eqref{6.e.4}, one can check that
\begin{equation}
\label{6.e.5}
\begin{aligned}
\e^2\|(1+r)^{3-\delta}\partial_y(\partial_xg_{1,\phi_1}\partial_yg_{1,\phi_1}-\partial_xg_{1,\phi_2}\partial_yg_{1,\phi_2})\|_{L^\infty(\R^2)}	
&\leq C\e^{\frac12}\|\phi_1-\phi_2\|_*,\\
\e^2\|(1+r)^{3-\delta}\partial_x\partial_y(\partial_xg_{1,\phi_1}\partial_yg_{1,\phi_1}-\partial_xg_{1,\phi_2}\partial_yg_{1,\phi_2})\|_{L^\infty(\R^2)}	
&\leq C\e^{\frac12}\|\phi_1-\phi_2\|_*.	
\end{aligned}	
\end{equation}
As a consequence of \eqref{6.e.4} and \eqref{6.e.5} we get
\begin{equation}
\label{6.e.6}
\e^2\|\partial_xg_{1,\phi_1}\partial_yg_{1,\phi_1}-\partial_xg_{1,\phi_2}\partial_yg_{1,\phi_2}\|_{***}\leq C\e^{\frac12}\|\phi_1-\phi_2\|_*.	
\end{equation}
While for $P_3,$ we study the term $\e^4\partial_yg_1\partial_y(g_1^2)$,
\begin{equation}
\label{6.e.7}
\begin{aligned}
&\|(1+r)^{\frac72-\delta}\left(\partial_yg_{1,\phi_1}\partial_y(g_{1,\phi_1}^2)-\partial_yg_{1,\phi_2}\partial_y(g_{1,\phi_2}^2)\right)\|_{L^\infty(\R^2)}\\		
&=\|(1+r)^{\frac72-\delta}(\partial_y(q+\phi_1)\partial_y((q+\phi_1)^2)-\partial_y(q+\phi_2)\partial_y((q+\phi_2)^2))\|_{L^\infty(\R^2)}\\
&\leq C\|(1+r)^{\frac72-\delta}(\partial_yq)^2(\phi_1-\phi_2)\|_{L^\infty(\R^2)}\\
&\quad+C\|(1+r)^{\frac72-\delta}q\partial_yq\partial_y(\phi_1-\phi_2)\|_{L^\infty(\R^2)}\\
&\quad+C\|(1+r)^{\frac72-\delta}\partial_yq(\partial_y\phi_1\phi_1-\partial_y\phi_2\phi_2)\|_{L^\infty(\R^2)}\\
&\quad+C\|(1+r)^{\frac72-\delta}q((\partial_y\phi_1)^2-(\partial_y\phi_2)^2)\|_{L^\infty(\R^2)}\\
&\quad+C\|(1+r)^{\frac72-\delta}(\phi_1(\partial_y\phi_1)^2-\phi_2(\partial_y\phi_2)^2)\|_{L^\infty(\R^2)}\\
&\leq C\e^{-\frac12}\|\phi_1-\phi_2\|_{*}+C\e^{-1}\|\phi_1-\phi_2\|^2_{*}
+C\e^{-1}\|\phi_1-\phi_2\|_{*}^3,
\end{aligned}
\end{equation}
and
\begin{equation}
\label{6.e.8}
\begin{aligned}
&\|(1+r)^{\frac52-\delta}\partial_x\left(\partial_yg_{1,\phi_1}\partial_y(g_{1,\phi_1}^2)-\partial_yg_{1,\phi_2}\partial_y(g_{1,\phi_2}^2)\right)\|_{L^\infty(\R^2)}\\		
&=\|(1+r)^{\frac52-\delta}\partial_x(\partial_y(q+\phi_1)\partial_y((q+\phi_1)^2)-\partial_y(q+\phi_2)\partial_y((q+\phi_2)^2))\|_{L^\infty(\R^2)}\\
&\leq C\|(1+r)^{\frac52-\delta}\partial_x((\partial_yq)^2)(\phi_1-\phi_2)\|_{L^\infty(\R^2)}\\
&\quad+C\|(1+r)^{\frac52-\delta}(\partial_yq)^2\partial_x(\phi_1-\phi_2)\|_{L^\infty(\R^2)}\\
&\quad+C\|(1+r)^{\frac52-\delta}\partial_x(q\partial_yq)\partial_y(\phi_1-\phi_2)\|_{L^\infty(\R^2)}\\
&\quad+C\|(1+r)^{\frac52-\delta}q\partial_yq\partial_x\partial_y(\phi_1-\phi_2)\|_{L^\infty(\R^2)}\\
&\quad+C\|(1+r)^{\frac52-\delta}\partial_x(\partial_yq)(\partial_y\phi_1\phi_1-\partial_y\phi_2\phi_2)\|_{L^\infty(\R^2)}\\
&\quad+C\|(1+r)^{\frac52-\delta}\partial_yq\partial_x(\partial_y\phi_1\phi_1-\partial_y\phi_2\phi_2)\|_{L^\infty(\R^2)}\\
&\quad+C\|(1+r)^{\frac52-\delta}\partial_xq((\partial_y\phi_1)^2-(\partial_y\phi_2)^2)\|_{L^\infty(\R^2)}\\
&\quad+C\|(1+r)^{\frac52-\delta}q\partial_x((\partial_y\phi_1)^2-(\partial_y\phi_2)^2)\|_{L^\infty(\R^2)}\\
&\quad+C\|(1+r)^{\frac52-\delta}\partial_x(\phi_1(\partial_y\phi_1)^2-\phi_2(\partial_y\phi_2)^2)\|_{L^\infty(\R^2)}\\
&\leq C\e^{-\frac12}\|\phi_1-\phi_2\|_{*}+C\e^{-1}\|\phi_1-\phi_2\|^2_{*}
+C\e^{-1}\|\phi_1-\phi_2\|_{*}^3.
\end{aligned}
\end{equation}
From \eqref{6.e.7} ad \eqref{6.e.8} we derive that
\begin{equation}
\label{6.e.9}
\e^4\|\partial_x^{-1}\left(\partial_yg_{1,\phi_1}\partial_y((g_{1,\phi_1}^2))-\partial_yg_{1,\phi_2}\partial_y((g_{1,\phi_2}^2))\right)\|_{**}
\leq C\e^{\frac12}\|\phi_1-\phi_2\|_*.
\end{equation}
The other terms in $\hat{P}_1,~P_2$ and $P_3$ can be treated in a similar way and we leave the computations to the interested reader.

Using the claim \eqref{6.11} and Proposition \ref{pr3.2}, we derive that
\begin{equation}
\label{6.12}
\|\mathcal{N}(\phi_1)-\mathcal{N}(\phi_2)\|_{*}\leq C\e^{\frac12}\|\phi_1-\phi_2\|_{*}.	
\end{equation}
Then we deduce the existence of a solution $\phi$ to \eqref{6.1} by contraction principle. Thereby the original existence result is established.
\end{proof}
\bigskip

\begin{center}
	\textbf{Acknowledgement }
\end{center}

Y. Liu is partially supported by NSFC No. 11971026
and The Fundamental Research Funds for the Central Universities WK3470000014. Z. Wang is partially supported by NSFC No.11871386 and NSFC No.11931012. J. Wei is partially supported by NSERC of Canada. W. Yang is partially supported by NSFC No.11801550, No.11871470 and No.12171456.

\bigskip

\end{document}